\newtheorem{definition}{Definition}[section]
\newtheorem{lemma}[definition]{Lemma}
\newtheorem{proposition}[definition]{Proposition}
\newtheorem{corollary}[definition]{Corollary}
\newtheorem{remark}[definition]{Remark}
\newtheorem{theorem}[definition]{Theorem}
\def\rawo\lonra{\longrightarrow}
\def\ot{\otimes}
\newenvironment{proof}{{\it Proof.}}{\hfill $ \square $ \vskip 4mm}
\begin{document}
\title{BiHom-Novikov algebras and infinitesimal BiHom-bialgebras}
\author{Ling Liu \\
College of Mathematics and Computer Science,\\
Zhejiang Normal University, \\
Jinhua 321004, China \\
e-mail: ntliulin@zjnu.cn \and Abdenacer Makhlouf \\
Universit\'{e} de Haute Alsace, \\
IRIMAS-D\'epartement  de Math\'{e}matiques,  \\
6 bis, rue des fr\`{e}res Lumi\`{e}re, F-68093 Mulhouse, France\\
e-mail: Abdenacer.Makhlouf@uha.fr \and Claudia Menini \\
University of Ferrara, \\
Department of Mathematics and Computer Science\\
Via Machiavelli 30, Ferrara, I-44121, Italy\\
e-mail: men@unife.it \and Florin Panaite \\
Institute of Mathematics of the Romanian Academy,\\
PO-Box 1-764, RO-014700 Bucharest, Romania\\
e-mail: florin.panaite@imar.ro }
\maketitle

\begin{abstract}
We introduce and study infinitesimal BiHom-bialgebras, BiHom-Novikov algebras, BiHom-Novikov-Poisson algebras, 
and find some relations among these concepts. Our main result is to show how to obtain a 
left BiHom-pre-Lie algebra from an infinitesimal BiHom-bialgebra. \\

\begin{small}
\noindent \textbf{Keywords}: BiHom-pre-Lie algebra, infinitesimal BiHom-bialgebra,  BiHom-Novikov algebra, 
BiHom-Novikov-Poisson algebra, associative BiHom-Yang-Baxter equation.\\
\textbf{MSC2010}: 15A04, 17A99, 17D99.
\end{small}
\end{abstract}

\section*{Introduction}

Algebras of Hom-type were introduced in the Physics literature of the 1990's related to quantum deformations 
of algebras of vector fields, which satisfy a modified Jacobi identity involving a homomorphism (such algebras were called 
Hom-Lie algebras in \cite{hls}, \cite{larsson}). Hom-analogues of other algebraic structures have been introduced afterwards, 
such as Hom-(co)associative (co)algebras, 
Hom-bialgebras, Hom-pre-Lie algebras etc. Recently, structures of a more general type have been 
introduced in \cite{gmmp}, called BiHom-type algebras, for which a certain algebraic identity is twisted by 
two commuting homomorphisms (called structure maps). 

Infinitesimal bialgebras were introduced by Joni and Rota in \cite{jonirota} and 
studied by Aguiar in a series of papers (\cite{aguiarcontemp,aguiarjalgebra, aguiarlectnotes}). They have connections 
with some other 
concepts such as Rota-Baxter operators, pre-Lie algebras, Lie bialgebras etc. A prominent class of examples, discovered by 
Aguiar, is provided by the path algebra of an arbitrary quiver. Infinitesimal bialgebras have been used (with a 
different name) in \cite{voiculescu} by Voiculescu 
in free probability theory. 

The concept of infinitesimal Hom-bialgebra (the Hom-analogue of infinitesimal bialgebras) 
was introduced and studied by Yau in \cite{yauinf}. We continued this study in our previous paper \cite{lmmp3}, 
where we obtained a Hom-analogue of the following theorem of Aguiar from \cite{aguiarlectnotes}:
Let $(A, \mu , \Delta )$ be an infinitesimal bialgebra, with notation $\mu (a\otimes b)=a\cdot b$ and 
$\Delta (a)=a_1\otimes a_2$, for all $a, b\in A$; if one defines a new operation on $A$ by $a\bullet b=b_1\cdot a\cdot b_2$, then 
$(A, \bullet $) is a left pre-Lie algebra. 
In order to obtain the Hom-generalization of this result, we relied on the observation that, if the infinitesimal bialgebra 
in Aguiar's theorem is commutative, then his theorem is a particular case of the Gel'fand-Dorfman theorem which shows how 
to obtain a Novikov algebra by using a derivation on a commutative associative algebra. Thus, what we did in \cite{lmmp3} 
was essentially to find a sort of connection between infinitesimal Hom-bialgebras and 
Hom-Novikov algebras (these have been also introduced by Yau in \cite{yaunovikov}). 

The main aim of the present paper is to extend the above mentioned results to the BiHom case. We define 
infinitesimal BiHom-bialgebras, and our main result (Theorem \ref{infprelie}) shows how to obtain a 
left BiHom-pre-Lie algebra from an infinitesimal BiHom-bialgebra. We use the same strategy as in the Hom case, namely we find 
a connection with so-called BiHom-Novikov algebras, that we also introduce and study here (we would like to 
emphasize that our concept of BiHom-Novikov algebra is different from the one introduced in \cite{guo}). 
Along the way, we also introduce and study several other concepts, such as BiHom-Novikov-Poisson algebras and 
quasitriangular infinitesimal BiHom-bialgebras. 

\section{Preliminaries} \label{sec1} 
\setcounter{equation}{0} 

We work over a base field $\Bbbk $. All
algebras, linear spaces etc. will be over $\Bbbk $; unadorned $\otimes $
means $\otimes_{\Bbbk}$. Unless otherwise specified, the
(co)algebras that will appear in what follows are
\emph{not} supposed to be (co)associative or 
(co)unital, the multiplication $\mu :A\otimes
A\rightarrow A$ of an algebra $(A, \mu )$ is denoted by $\mu
(a\otimes a^{\prime })=a\cdot a^{\prime }$, and for a comultiplication $\Delta :C\rightarrow
C\otimes C$ on a linear space $C$ we use a Sweedler-type notation $\Delta
(c)=c_1\otimes c_2$, for $c\in C$. 
For the composition of two maps $f$
and $g$, we will write either $g\circ f$ or simply $gf$. For the identity
map on a linear space $V$ we will use the notation $id_V$.

\begin{definition} (\cite{gmmp}) 
A BiHom-associative algebra is a 4-tuple $\left( A,\mu ,\alpha ,\beta \right) $, where $A$ is
a linear space and $\alpha , \beta :A\rightarrow A$ 
and $\mu :A\otimes A\rightarrow A$ are linear maps such that 
$\alpha \circ \beta =\beta \circ \alpha $, 
$\alpha (x\cdot y) =\alpha (x)\cdot \alpha (y)$, $\beta (x\cdot y)=\beta (x)\cdot \beta (y)$ and 
\begin{eqnarray}
\alpha (x)\cdot (y\cdot z)=(x\cdot y)\cdot \beta (z), \label{BHassoc}
\end{eqnarray}
for all $x, y, z\in A$. The maps $\alpha $ and $\beta $ (in this order) are called the structure maps
of $A$ and condition (\ref{BHassoc}) is called the BiHom-associativity condition.

A morphism $f:(A, \mu _A , \alpha _A, \beta _A)\rightarrow (B, \mu _B ,
\alpha _B, \beta _B)$ of BiHom-associative algebras is a linear map $%
f:A\rightarrow B$ such that $\alpha _B\circ f=f\circ \alpha _A$, $\beta
_B\circ f=f\circ \beta _A$ and $f\circ \mu_A=\mu _B\circ (f\otimes f)$.
\end{definition}

If $(A, \mu )$ is an associative algebra and $\alpha , \beta :A\rightarrow A$ are two commuting algebra maps, then 
$A_{(\alpha , \beta )}:=(A, \mu \circ (\alpha \otimes \beta ), \alpha , \beta )$ is a BiHom-associative algebra, 
called the Yau twist of $A$ via the maps $\alpha $ and $\beta $. 
\begin{definition} (\cite{gmmp}) 
A BiHom-coassociative coalgebra is a 4-tuple $(C, \Delta, \psi ,
\omega )$, in which $C$ is a linear space, $\psi , \omega :C\rightarrow C$
and $\Delta :C\rightarrow C\otimes C$ are linear maps, such that $\psi \circ \omega =\omega \circ \psi $, 
$(\psi \otimes \psi )\circ \Delta = \Delta \circ \psi $, 
$(\omega \otimes \omega )\circ \Delta = \Delta \circ \omega $ and 
\begin{eqnarray}
&&(\Delta \otimes \psi )\circ \Delta = (\omega \otimes \Delta )\circ \Delta . \label{BHcoassoc}
\end{eqnarray}

The maps $\psi $ and $\omega $ (in this order) are called the structure maps of
$C$ and condition (\ref{BHcoassoc}) is called the BiHom-coassociativity condition.

A morphism $g:(C, \Delta _C , \psi _C, \omega _C)\rightarrow (D, \Delta _D ,
\psi _D, \omega _D)$ of BiHom-coassociative coalgebras is a linear map $%
g:C\rightarrow D$ such that $\psi _D\circ g=g\circ \psi _C$, $\omega _D\circ
g=g\circ \omega _C$ and $(g\otimes g)\circ \Delta _C=\Delta _D\circ g$.
\end{definition}
\begin{definition}
A left pre-Lie algebra is a pair $(A, \mu )$, where $A$ is a a linear space and 
$\mu :A\ot A\rightarrow A$ is a linear map satisfying the condition 
\begin{eqnarray*}
&&x\cdot (y\cdot z)-(x\cdot y)\cdot z=y\cdot (x\cdot z)-(y\cdot x)\cdot z, \;\;\;  \forall \;\;x, y, z\in A. 
\end{eqnarray*}

A morphism of left pre-Lie algebras from $(A, \mu )$ to $(A', \mu ')$ is a linear map 
$f :A\rightarrow A'$ satisfying $f (x\cdot y)=f (x)\cdot ' f(y)$, for all $x, y\in A$. 
\end{definition}
\begin{definition} (\cite{lmmp2}) 
A left BiHom-pre-Lie algebra is a 4-tuple $(A, \mu ,
\alpha , \beta )$, where $A$ is a linear space and $\mu :A\otimes
A\rightarrow A$ and $\alpha , \beta :A\rightarrow A$ are linear maps
satisfying $\alpha \circ \beta =\beta \circ \alpha $, $\alpha (x\cdot
y)=\alpha (x)\cdot \alpha (y)$, $\beta (x\cdot y)=\beta (x)\cdot \beta (y)$
and
\begin{eqnarray}
&&\alpha \beta (x)\cdot (\alpha (y)\cdot z)-(\beta (x)\cdot \alpha (y))\cdot
\beta (z) =\alpha \beta (y)\cdot (\alpha (x)\cdot z)-(\beta (y)\cdot \alpha
(x))\cdot \beta (z),  \label{lBHpL}
\end{eqnarray}
for all $x, y, z\in A$. We call $\alpha $ and $\beta $ (in this order) the
structure maps of $A$.

A morphism $f:(A, \mu , \alpha , \beta )\rightarrow (A^{\prime }, \mu 
^{\prime }, \alpha ^{\prime }, \beta ^{\prime })$ of left 
BiHom-pre-Lie algebras is a linear map $f:A\rightarrow A^{\prime }$
satisfying $f(x\cdot y)=f(x)\cdot ^{\prime }f(y)$, for all $x, y\in A$, as
well as $f\circ \alpha =\alpha ^{\prime }\circ f$ and $f\circ \beta =\beta
^{\prime }\circ f$.
\end{definition}

A BiHom-associative algebra is an example of a left BiHom-pre-Lie algebra.

\begin{definition} (\cite{lmmp2}) 
A left (respectively right) BiHom-Leibniz algebra is a 4-tuple $(L, [\cdot
,\cdot ], \alpha , \beta )$, where $L$ is a linear space, $[\cdot , \cdot
]:L\times L\rightarrow L$ is a bilinear map and $\alpha , \beta
:L\rightarrow L$ are linear maps satisfying $\alpha \circ \beta =\beta \circ
\alpha $, $\alpha ([x, y])=[\alpha (x), \alpha (y)]$, $\beta ([x, y])=[\beta
(x), \beta (y)]$ and
\begin{eqnarray}
&&[\alpha \beta (x), [y, z]]=[[\beta (x), y], \beta (z)]+[\beta (y), [\alpha
(x), z]],  \label{leftBHleibniz}
\end{eqnarray}
respectively
\begin{eqnarray}
&&[[x, y], \alpha \beta (z)]=[[x, \beta (z)], \alpha (y)]+[\alpha (x), [y,
\alpha (z)]],  \label{rightBHleibniz}
\end{eqnarray}
for all $x, y, z\in L$. We call $\alpha $ and $\beta $ (in this order) the
structure maps of $L$.
\end{definition}

\begin{definition} (\cite{lmmp2}) 
A left (respectively right) BiHom-Lie algebra is a left (respectively right)
BiHom-Leibniz algebra $(L, [\cdot ,\cdot ], \alpha , \beta )$ satisfying the
BiHom-skew-symmetry condition
\begin{eqnarray}
&&[\beta (x), \alpha (y)]=-[\beta (y), \alpha (x)], \;\;\;\forall \;x, y\in
L.  \label{BHskewsym}
\end{eqnarray}
A morphism $f:( L,[\cdot ,\cdot ] ,\alpha ,\beta )\rightarrow ( L^{\prime },
[\cdot ,\cdot ]^{\prime },\alpha ^{\prime },\beta ^{\prime })$ of BiHom-Lie
algebras is a linear map $f:L\rightarrow L^{\prime }$ such that $\alpha
^{\prime }\circ f=f\circ \alpha $, $\beta ^{\prime }\circ f=f\circ \beta $
and $f([x, y])=[f(x), f(y)]^{\prime }$, for all $x, y\in L$.
\end{definition}
\begin{definition} (\cite{lmmp1}) A BiHom-dendriform algebra is a 5-tuple $(A, \prec , \succ , \alpha , \beta )$
consisting of a linear space $A$ and linear maps $\prec , \succ :A\otimes A\rightarrow A$ and
$\alpha , \beta :A\rightarrow A$ such that, for all $x, y, z\in A$: 
\begin{eqnarray}
&&\alpha \circ \beta =\beta \circ \alpha , \label{BiHomdend1} \\
&&\alpha (x\prec y)=\alpha (x)\prec \alpha (y), ~
\alpha (x\succ y)=\alpha (x)\succ \alpha (y), \label{BiHomdend3} \\
&&\beta (x\prec y)=\beta (x)\prec \beta (y), ~
\beta (x\succ y)=\beta (x)\succ \beta (y), \label{BiHomdend5} \\
&&(x\prec y)\prec \beta (z)=\alpha (x)\prec (y\prec z+y\succ z),  \label{BiHomdend6} \\
&&(x \succ y)\prec \beta (z)=\alpha (x)\succ (y\prec z), \label{BiHomdend7} \\
&&\alpha (x)\succ (y\succ z)=(x\prec y+x\succ y)\succ \beta (z). \label{BiHomdend8}
\end{eqnarray}

We call $\alpha $ and $\beta $ (in this order) the structure maps
of $A$.
\end{definition}
\begin{proposition} (\cite{lmmp2}) \label{BHdendpreLie}
Let $(A, \prec , \succ , \alpha , \beta )$ be a BiHom-dendriform algebra such that $\alpha $ and $\beta $
are bijective. Let $\star :A\ot A\rightarrow A$ be the linear map defined for all $x, y\in A$ by
\begin{eqnarray*}
&&x\star y=x\succ y-(\alpha ^{-1}\beta (y))\prec (\alpha \beta ^{-1}(x)). 
\end{eqnarray*}
Then $(A, \star, \alpha , \beta )$  
is a left BiHom-pre-Lie algebra.
\end{proposition}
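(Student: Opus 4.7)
The plan is direct verification. The compatibility $\alpha\beta = \beta\alpha$ is inherited from the BiHom-dendriform structure, and the multiplicativity $\alpha(x \star y) = \alpha(x) \star \alpha(y)$ (and similarly for $\beta$) is a short bookkeeping check: distributing $\alpha$ inside the formula for $\star$ works because $\alpha$ is multiplicative for $\prec$ and $\succ$ and commutes with $\beta$, hence with both $\alpha^{-1}\beta$ and $\alpha\beta^{-1}$.

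For the identity (\ref{lBHpL}), I would introduce the associator
\[
\mathrm{as}_{\star}(x, y, z) := \alpha\beta(x) \star (\alpha(y) \star z) - (\beta(x) \star \alpha(y)) \star \beta(z)
\]
and prove $\mathrm{as}_{\star}(x, y, z) = \mathrm{as}_{\star}(y, x, z)$. Using $x \star y = x \succ y - \alpha^{-1}\beta(y) \prec \alpha\beta^{-1}(x)$ together with the multiplicativity of $\alpha^{-1}\beta$ and $\alpha\beta^{-1}$ on $\prec$ and $\succ$, the expression expands into eight explicit terms, classified by their (outer, inner) operation pattern $(\succ, \succ)$, $(\succ, \prec)$, $(\prec, \succ)$, $(\prec, \prec)$, with two terms of each type (one from each half of the associator). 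The two $(\succ, \succ)$ terms combine via (\ref{BiHomdend8}) (applied with $x \mapsto \beta(x)$, $y \mapsto \alpha(y)$), collapsing to $(\beta(x) \prec \alpha(y)) \succ \beta(z)$. The two $(\prec, \prec)$ terms combine via (\ref{BiHomdend6}) (with the substitutions $p = \alpha^{-2}\beta^{2}(z)$, $q = \alpha(y)$, $r = \alpha^{2}\beta^{-1}(x)$), collapsing to $\alpha^{-1}\beta^{2}(z) \prec (\alpha(y) \succ \alpha^{2}\beta^{-1}(x))$. The leftover $(\succ, \prec)$ term from the first half of the associator is rewritten via (\ref{BiHomdend7}) into outer-$\prec$ inner-$\succ$ form, so it lines up with the analogous $(\prec, \succ)$ terms coming from the second half.

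After these three applications of the axioms, the surviving expression for $\mathrm{as}_{\star}(x, y, z)$ consists of six terms of the shapes $(\beta(u) \prec \alpha(v)) \succ \beta(z)$, $\alpha^{-1}\beta^{2}(z) \prec (\alpha(u) \succ \alpha^{2}\beta^{-1}(v))$, and $(\beta(u) \succ \alpha^{-1}\beta(z)) \prec \alpha^{2}(v)$, where $(u, v)$ ranges over $(x, y)$ and $(y, x)$ with appropriate signs. Invariance of this sum under the swap $x \leftrightarrow y$ is then immediate, which is exactly (\ref{lBHpL}). The main obstacle is purely bureaucratic: the twists $\alpha^{-1}\beta$ and $\alpha\beta^{-1}$ built into $\star$ compound with the $\alpha, \beta$ appearing in (\ref{lBHpL}), so one has to track carefully which powers of $\alpha$ and $\beta$ land on each slot. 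The bijectivity of $\alpha$ and $\beta$ enters precisely to legitimize the substitutions (with $\alpha^{-1}$ and $\beta^{-1}$ in the arguments) needed to fit each BiHom-dendriform axiom to the terms at hand.
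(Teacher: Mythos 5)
This proposition is imported from \cite{lmmp2} and the paper gives no proof of it, so there is nothing in the text to compare your argument against; judged on its own, your verification is correct. I checked the expansion: writing $\alpha(y)\star z=\alpha(y)\succ z-\alpha^{-1}\beta(z)\prec\alpha^{2}\beta^{-1}(y)$ and $\beta(x)\star\alpha(y)=\beta(x)\succ\alpha(y)-\beta(y)\prec\alpha(x)$, the associator does split into the eight terms you describe, your applications of (\ref{BiHomdend8}), (\ref{BiHomdend6}) (with exactly the substitutions $\alpha^{-2}\beta^{2}(z)$, $\alpha(y)$, $\alpha^{2}\beta^{-1}(x)$) and (\ref{BiHomdend7}) are legitimate, and the surviving six terms
\[
(\beta(u)\prec\alpha(v))\succ\beta(z),\quad \alpha^{-1}\beta^{2}(z)\prec(\alpha(u)\succ\alpha^{2}\beta^{-1}(v)),\quad -(\beta(u)\succ\alpha^{-1}\beta(z))\prec\alpha^{2}(v)
\]
for $(u,v)\in\{(x,y),(y,x)\}$ are manifestly symmetric under $x\leftrightarrow y$, which is (\ref{lBHpL}). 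This is the standard direct verification one would expect to find in \cite{lmmp2}.
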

\begin{definition} (\cite{gmmp}, \cite{lmmp1})
Let $(A, \mu _A , \alpha _A, \beta _A)$ be a BiHom-associative algebra and let $(M, \alpha _M, \beta _M)$ be a triple where $M$
is a linear space and $\alpha _M, \beta _M:M \rightarrow M$ are commuting linear maps.\\
(i) $(M, \alpha _M, \beta _M)$ is a left $A$-module if 
we have a linear map $A\otimes M\rightarrow M$, $a\otimes m\mapsto a\cdot m$, such that
$\alpha _M(a\cdot m)=\alpha _A(a)\cdot \alpha _M(m)$,
$\beta _M(a\cdot m)=\beta _A(a)\cdot \beta _M(m)$ and
\begin{eqnarray}
&&\alpha _A(a)\cdot (a^{\prime }\cdot m)=(a\cdot a^{\prime })\cdot \beta _M(m),\;\;\;\;\forall \;\;a, a'\in A, \;m\in M.
\label{lmod4}
\end{eqnarray}
(ii) $(M, \alpha _M, \beta _M)$ is a right $A$-module if 
we have a linear map $M\otimes A\rightarrow M$, $m\otimes a\mapsto m\cdot a$, such that
$\alpha _M(m\cdot a)=\alpha _M(m)\cdot \alpha _A(a)$,
$\beta _M(m\cdot a)=\beta _M(m)\cdot \beta _A(a)$ and
\begin{eqnarray}
&&\alpha _M(m)\cdot (a\cdot a')=(m\cdot a)\cdot \beta _A(a'), \;\;\;\;\forall \;\;a, a'\in A, \;m\in M.
\label{rmod4}
\end{eqnarray}
(iii) If $(M, \alpha _M, \beta _M)$ is 
a left and right $A$-module, 
then $M$ is called an $A$-bimodule if 
\begin{eqnarray}
&&\alpha _A(a)\cdot (m\cdot a')=(a\cdot m)\cdot \beta _A(a'), \;\;\;\;\forall \;\;a, a'\in A, \;m\in M. \label{BHbim}
\end{eqnarray}
\end{definition}
\begin{definition}
An algebra $(A, \mu )$ is called a Novikov algebra if it is left pre-Lie and 
\begin{eqnarray}
&&(x\cdot y)\cdot z=(x\cdot z)\cdot y, \;\;\; \forall \; x, y, z\in A. \label{Novikov}
\end{eqnarray}

A morphism of Novikov algebras from $(A, \mu )$ to $(A', \mu ')$ is a linear map 
$f :A\rightarrow A'$ satisfying $f (x\cdot y)=f (x)\cdot ' f(y)$, for all $x, y\in A$. 
\end{definition}
\begin{theorem} \label{GD}
(Gel'fand-Dorfman)
Let $(A, \mu )$ be an associative and commutative algebra and let $D:A\rightarrow A$ be 
a derivation. Define a new multiplication on $A$ by $a* b=a\cdot D(b)$, for all $a, b\in A$. 
Then $(A, * )$ is a Novikov algebra.
\end{theorem}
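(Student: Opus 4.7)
The plan is a direct verification that the new product $*$ satisfies both defining axioms of a Novikov algebra, namely the left pre-Lie identity and the right-commutativity identity $(x*y)*z=(x*z)*y$. Both axioms should reduce, after expanding $a*b=a\cdot D(b)$, to identities in $(A,\mu)$ that follow from associativity, commutativity, and the Leibniz rule $D(a\cdot b)=D(a)\cdot b+a\cdot D(b)$.

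First I would compute $x*(y*z)$. Using the definition and the Leibniz rule,
\begin{eqnarray*}
x*(y*z)=x\cdot D(y\cdot D(z))=x\cdot D(y)\cdot D(z)+x\cdot y\cdot D^2(z),
\end{eqnarray*}
while $(x*y)*z=(x\cdot D(y))\cdot D(z)=x\cdot D(y)\cdot D(z)$ by associativity. Hence
\begin{eqnarray*}
x*(y*z)-(x*y)*z=x\cdot y\cdot D^2(z),
\end{eqnarray*}
and this expression is symmetric in $x$ and $y$ thanks to commutativity of $\mu$. Swapping $x$ and $y$ therefore yields the left pre-Lie identity.

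For the Novikov identity, I would expand both sides: $(x*y)*z=x\cdot D(y)\cdot D(z)$ and $(x*z)*y=x\cdot D(z)\cdot D(y)$. These are equal by commutativity of $\mu$ (together with associativity to rearrange the parentheses), which proves $(x*y)*z=(x*z)*y$. Together with the pre-Lie identity, this gives that $(A,*)$ is a Novikov algebra.

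I do not foresee any genuine obstacle here: the argument is a two-line computation in each case, and the only ingredients are associativity, commutativity, and the Leibniz rule. The only thing to watch is the correct placement of parentheses when applying $D$ to a product; once the Leibniz rule is invoked, both identities collapse immediately by commutativity.
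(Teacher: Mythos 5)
Your proof is correct and follows exactly the standard verification: expand both axioms via the Leibniz rule, observe that $x*(y*z)-(x*y)*z=x\cdot y\cdot D^2(z)$ is symmetric in $x$ and $y$, and check right-commutativity directly from commutativity and associativity. The paper states this classical theorem without proof, but its proof of the BiHom-generalization (Proposition \ref{generalGD}) is precisely the twisted version of your computation, so your argument matches the paper's approach in all essentials.
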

\begin{definition} (\cite{Xu1}, \cite{Xu2}) A Novikov-Poisson algebra is a triple $(A, \cdot , *)$ such that 
$(A, \cdot )$ is a  commutative associative algebra, $(A, * )$ is a Novikov algebra and 
the following compatibility conditions hold, for all $x, y, z\in A$:
 \begin{eqnarray}
&& (x*y)\cdot z-x* (y\cdot z)=(y*x)\cdot z-y* (x\cdot z),~~~~ \label{NP1}\\
 && (x \cdot y)* z=(x*z)\cdot y. \label{NP2}
\end{eqnarray}

A morphism of Novikov-Poisson algebras from $(A, \cdot , * )$ to $(A', \cdot ', *')$ is a linear map 
$f :A\rightarrow A'$ satisfying $f (x\cdot y)=f (x)\cdot ' f(y)$ and $f (x* y)=f (x)*' f(y)$, 
for all $x, y\in A$. 
\end{definition}

Note that, by the commutativity of $(A, \cdot )$, (\ref{NP2}) is equivalent to 
\begin{eqnarray}
&&(x\cdot y)*z=x\cdot (y*z), \;\;\; \forall \; x, y, z\in A. \label{NP3}
\end{eqnarray}

\section{BiHom-Novikov algebras} \label{sec2}
\setcounter{equation}{0} 
We begin by introducing the BiHom-analogue of Novikov algebras (the Hom-analogue was introduced in \cite{yaunovikov}). 
\begin{definition}
\label{binovialgebra} A BiHom-Novikov algebra is a 4-tuple $(A, \mu, \alpha
, \beta)$, where $A$ is a linear space, $\mu: A \otimes A\rightarrow A$ is a
linear map and $%
\alpha , \beta : A \rightarrow A$ are commuting linear maps (called the structure maps of $A$), 
satisfying the
following conditions, for all $x, y, z \in A$:
\begin{eqnarray}
&& \alpha(x\cdot y)=\alpha(x)\cdot \alpha(y),~~ \beta(x\cdot y)=\beta(x)\cdot \beta(y),
\label{BiNovik} \\
&&
(\beta(x)\cdot \alpha(y))\cdot \beta(z)-\alpha\beta(x)\cdot (\alpha(y)\cdot z)=(\beta(y)\cdot \alpha(x))\cdot 
\beta(z)-\alpha\beta(y)\cdot (\alpha(x)\cdot z),  \label{BiNoviko} \\
&& (x \cdot \beta (y))\cdot \alpha\beta(z)=(x \cdot \beta (z))\cdot \alpha\beta(y).
\label{Binovikov}
\end{eqnarray}
In other words, a BiHom-Novikov algebra is a left BiHom-pre-Lie algebra
satisfying (\ref{Binovikov}).

A morphism $f:(A, \mu _A , \alpha _A, \beta _A)\rightarrow (B, \mu _B ,
\alpha _B, \beta _B)$ of BiHom-Novikov algebras is a linear map 
$f:A\rightarrow B$ such that $\alpha _B\circ f=f\circ \alpha _A$, $\beta
_B\circ f=f\circ \beta _A$ and $f\circ \mu_A=\mu _B\circ (f\otimes f)$.
\end{definition}

\begin{proposition}Let $(A, \mu )$ be a Novikov algebra and let $\alpha , \beta :
A\rightarrow A$ be two commuting Novikov algebra morphisms. Then $A_{(\alpha ,
\beta )}:=(A, \mu_{(\alpha , \beta )}:=\mu \circ (\alpha \otimes \beta),
\alpha , \beta )$ is a BiHom-Novikov algebra, called the Yau twist of $(A,
\mu )$.
\end{proposition}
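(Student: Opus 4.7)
The plan is a direct verification by unpacking the twisted product $x\ast y:=\mu_{(\alpha,\beta)}(x\otimes y)=\alpha(x)\cdot\beta(y)$ and reducing each BiHom-Novikov axiom to a corresponding untwisted identity in $(A,\mu)$, exploiting that $\alpha$ and $\beta$ are commuting Novikov algebra morphisms.

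First I would dispatch the multiplicativity conditions (\ref{BiNovik}). Since $\alpha$ is an algebra morphism for $\mu$ and commutes with $\beta$, one has $\alpha(x\ast y)=\alpha(\alpha(x)\cdot\beta(y))=\alpha^{2}(x)\cdot\alpha\beta(y)=\alpha(x)\ast\alpha(y)$, and symmetrically for $\beta$. Commutativity $\alpha\beta=\beta\alpha$ is inherited from the hypothesis.

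Next I would tackle the left BiHom-pre-Lie identity (\ref{BiNoviko}). Expanding gives
\begin{eqnarray*}
(\beta(x)\ast\alpha(y))\ast\beta(z)&=&(\alpha^{2}\beta(x)\cdot\alpha^{2}\beta(y))\cdot\beta^{2}(z), \\
\alpha\beta(x)\ast(\alpha(y)\ast z)&=&\alpha^{2}\beta(x)\cdot(\alpha^{2}\beta(y)\cdot\beta^{2}(z)),
\end{eqnarray*}
so the LHS of (\ref{BiNoviko}) is precisely the associator $(X,Y,Z)=(X\cdot Y)\cdot Z-X\cdot(Y\cdot Z)$ evaluated at $X=\alpha^{2}\beta(x)$, $Y=\alpha^{2}\beta(y)$, $Z=\beta^{2}(z)$. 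The RHS becomes the same associator with $X$ and $Y$ swapped. Since $(A,\mu)$ is left pre-Lie, its associator is symmetric in the first two slots, hence LHS $=$ RHS.

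Finally, for (\ref{Binovikov}) I would expand both sides as
\begin{eqnarray*}
(x\ast\beta(y))\ast\alpha\beta(z)&=&(\alpha^{2}(x)\cdot\alpha\beta^{2}(y))\cdot\alpha\beta^{2}(z), \\
(x\ast\beta(z))\ast\alpha\beta(y)&=&(\alpha^{2}(x)\cdot\alpha\beta^{2}(z))\cdot\alpha\beta^{2}(y),
\end{eqnarray*}
and apply the Novikov identity (\ref{Novikov}) in $(A,\mu)$ with $a=\alpha^{2}(x)$, $b=\alpha\beta^{2}(y)$, $c=\alpha\beta^{2}(z)$. No real obstacle is expected; the only care needed is the parenthesis bookkeeping and keeping track of where each $\alpha$ and $\beta$ lands, which is straightforward because $\alpha,\beta$ commute and are both algebra maps. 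The essential conceptual point is simply that both BiHom axioms, after twisting, reduce via the morphism property of $\alpha,\beta$ to the original left pre-Lie and Novikov axioms of $(A,\mu)$ applied at suitably twisted arguments.
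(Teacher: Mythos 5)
Your proof is correct and follows essentially the same route as the paper: a direct verification that unpacks $x\ast y=\alpha(x)\cdot\beta(y)$ and reduces each twisted axiom to the corresponding untwisted one. The only difference is that the paper cites its earlier work for the left BiHom-pre-Lie part and verifies only (\ref{Binovikov}) explicitly, whereas you carry out the pre-Lie computation inline; the computations agree.
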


\begin{proof}
For all $x, y\in A$, we write $\mu_{(\alpha , \beta )}(x \otimes y)=x \star y
= \alpha(x)\cdot \beta(y)$. We already know from \cite%
{lmmp2} that $A_{(\alpha , \beta )}$ is a left BiHom-pre-Lie algebra, so we
only need to prove (\ref{Binovikov}). We compute:
\begin{eqnarray*}
(x \star \beta (y))\star \alpha\beta(z)&=& (\alpha^2
(x)\cdot \alpha\beta^{2}(y))\cdot \alpha\beta^{2} (z) \\
&\overset{(\ref{Novikov})}{=}&(\alpha^2 (x) \cdot \alpha
\beta^{2}(z))\cdot \alpha\beta^{2}(y) \\
&=&\alpha(\alpha (x)\cdot \beta^{2}(z))\cdot \beta (\alpha\beta(y)) \\
&=& (\alpha(x)\cdot \beta^2(z))\star \alpha\beta(y)=(x \star \beta (z))\star
\alpha\beta(y).
\end{eqnarray*}
So indeed $A_{(\alpha , \beta )}$ is a BiHom-Novikov algebra.
\end{proof}

More generally, one can prove the following result: 
\begin{proposition}
Let $(A, \mu, \alpha, \beta)$ be a BiHom-Novikov algebra and let $\tilde{\alpha}, \tilde{\beta}: A\rightarrow A$ be 
two morphisms of BiHom-Novikov algebras such that any two of the maps $\alpha $, $\beta $, $\tilde{\alpha}$, 
$\tilde{\beta}$ commute. Then
$A_{(\tilde{\alpha}, \tilde{\beta})}:=(A, \mu \circ (\tilde{\alpha}\otimes \tilde{\beta}), 
\alpha\circ \tilde{\alpha},  \beta\circ \tilde{\beta})$ is also a BiHom-Novikov algebra.
\end{proposition}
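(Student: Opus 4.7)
The plan is to verify directly the three axioms of a BiHom-Novikov algebra (Definition~\ref{binovialgebra}) for the proposed twist, writing the twisted product as $x\star y:=\tilde{\alpha}(x)\cdot \tilde{\beta}(y)$ and letting the new structure maps be $A:=\alpha\tilde{\alpha}$ and $B:=\beta\tilde{\beta}$. Commutativity $A\circ B=B\circ A$ is immediate from the pairwise commutativity hypothesis. Multiplicativity of $A$ (and symmetrically of $B$) with respect to $\star$ follows from the fact that $\alpha$ and $\tilde{\alpha}$ are algebra maps for $\mu$ that commute with $\tilde{\beta}$: one checks
\begin{eqnarray*}
A(x\star y)=\alpha\tilde{\alpha}(\tilde{\alpha}(x)\cdot \tilde{\beta}(y))=\alpha\tilde{\alpha}^2(x)\cdot \alpha\tilde{\alpha}\tilde{\beta}(y)=\tilde{\alpha}(A(x))\cdot \tilde{\beta}(A(y))=A(x)\star A(y).
\end{eqnarray*}

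Next, for the Novikov identity (\ref{Binovikov}), I expand the left-hand side
\begin{eqnarray*}
(x\star B(y))\star A B(z)=\tilde{\alpha}\bigl(\tilde{\alpha}(x)\cdot \tilde{\beta}B(y)\bigr)\cdot \tilde{\beta}AB(z)=\bigl(\tilde{\alpha}^2(x)\cdot \tilde{\alpha}\tilde{\beta}^2\beta(y)\bigr)\cdot \alpha\tilde{\alpha}\tilde{\beta}^2\beta(z),
\end{eqnarray*}
and observe that, setting $x':=\tilde{\alpha}^2(x)$, $y':=\tilde{\alpha}\tilde{\beta}^2(y)$, $z':=\tilde{\alpha}\tilde{\beta}^2(z)$, this coincides (by pairwise commutativity of all four maps) with $(x'\cdot \beta(y'))\cdot \alpha\beta(z')$. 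Applying (\ref{Binovikov}) for $(A,\mu,\alpha,\beta)$ swaps $y'$ and $z'$ and then one unwinds the expression to recognize $(x\star B(z))\star A B(y)$.

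The main obstacle is the left BiHom-pre-Lie identity (\ref{BiNoviko}), but it succumbs to the same strategy. Expanding each of the four terms in
\begin{eqnarray*}
(B(x)\star A(y))\star B(z)-AB(x)\star (A(y)\star z)=(B(y)\star A(x))\star B(z)-AB(y)\star (A(x)\star z)
\end{eqnarray*}
via the definition of $\star$ and using that $\tilde{\alpha},\tilde{\beta}$ are algebra morphisms for $\mu$ and commute with $\alpha,\beta$, each side becomes an expression of the form $(\beta(u)\cdot \alpha(v))\cdot \beta(w)-\alpha\beta(u)\cdot (\alpha(v)\cdot w)$ for the twisted elements $u=\tilde{\alpha}\tilde{\beta}^2(x)$ (resp.\ $\tilde{\alpha}\tilde{\beta}^2(y)$), $v=\tilde{\alpha}^2\tilde{\beta}(y)$ (resp.\ $\tilde{\alpha}^2\tilde{\beta}(x)$), $w=\tilde{\alpha}\tilde{\beta}(z)$; the identity is then the original (\ref{BiNoviko}) applied to these elements. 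The only real care needed is bookkeeping of the exponents of $\tilde{\alpha}$ and $\tilde{\beta}$ distributed among the three slots, which is guaranteed to match on the two sides because $x$ and $y$ play symmetric roles in the pre-Lie identity; no essentially new computation beyond the original BiHom-Novikov axioms is required.
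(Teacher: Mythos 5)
The paper offers no proof of this proposition (it is stated as a ``more general'' variant of the preceding Yau-twist result), so direct verification of the three axioms, as you do, is the intended and essentially only route; your treatment of multiplicativity and of the Novikov identity (\ref{Binovikov}) is correct. However, your handling of the left BiHom-pre-Lie identity (\ref{BiNoviko}) contains a concrete error that, as written, breaks the final step. Expanding $(B(x)\star A(y))\star B(z)$ with $A=\alpha\tilde{\alpha}$, $B=\beta\tilde{\beta}$ gives
$$\bigl(\tilde{\alpha}^{2}\tilde{\beta}\beta(x)\cdot\tilde{\alpha}^{2}\tilde{\beta}\alpha(y)\bigr)\cdot\tilde{\beta}^{2}\beta(z),$$
and $AB(x)\star(A(y)\star z)=\alpha\beta\tilde{\alpha}^{2}\tilde{\beta}(x)\cdot\bigl(\alpha\tilde{\alpha}^{2}\tilde{\beta}(y)\cdot\tilde{\beta}^{2}(z)\bigr)$, so the correct identification is $u=\tilde{\alpha}^{2}\tilde{\beta}(x)$, $v=\tilde{\alpha}^{2}\tilde{\beta}(y)$, $w=\tilde{\beta}^{2}(z)$ --- not your $u=\tilde{\alpha}\tilde{\beta}^{2}(x)$, $v=\tilde{\alpha}^{2}\tilde{\beta}(y)$, $w=\tilde{\alpha}\tilde{\beta}(z)$.

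This is not merely cosmetic. Your closing claim that the matching of exponents is ``guaranteed\dots because $x$ and $y$ play symmetric roles'' is precisely the point that must be verified: identity (\ref{BiNoviko}) in the original algebra interchanges the element in the $u$-slot with the element in the $v$-slot, so the argument only closes if $u$ and $v$ are images of $x$ and $y$ under the \emph{same} operator. With your stated values ($\tilde{\alpha}\tilde{\beta}^{2}$ in the first slot versus $\tilde{\alpha}^{2}\tilde{\beta}$ in the second), applying (\ref{BiNoviko}) to $u,v,w$ would produce $\bigl(\beta(\tilde{\alpha}^{2}\tilde{\beta}(y))\cdot\alpha(\tilde{\alpha}\tilde{\beta}^{2}(x))\bigr)\cdot\beta(w)-\cdots$, which is \emph{not} the expansion of $(B(y)\star A(x))\star B(z)-AB(y)\star(A(x)\star z)$. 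The proof is rescued only because the correct bookkeeping puts the same operator $\tilde{\alpha}^{2}\tilde{\beta}$ in both of the first two slots (and $\tilde{\beta}^{2}$ in the third), after which the symmetry of (\ref{BiNoviko}) under $u\leftrightarrow v$ does apply verbatim. Redo the expansion with these operators and state them explicitly; the rest of your argument then goes through.
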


The next concept is the BiHom-analogue of the concept of commutative associative algebra. 

\begin{definition}
\label{commbinovi} A BiHom-associative algebra $(A, \mu, \alpha , \beta)$ is
called BiHom-commutative if
\begin{eqnarray}
\beta(a)\cdot \alpha(b)=\beta(b)\cdot \alpha(a), \;\;\; \forall \;\;a, b\in A. 
\label{commnovi}
\end{eqnarray}
\end{definition}
\begin{remark}
If $(A, \mu)$ is a commutative associative algebra and $\alpha , \beta
:A\rightarrow A$ are commuting algebra morphisms, then $(A, \mu \circ (\alpha \otimes \beta ), \alpha ,
\beta )$ is a BiHom-commutative algebra. More generally, if $(A, \mu, \alpha, \beta)$ is a BiHom-commutative algebra and $\tilde{\alpha}, \tilde{\beta}: A\rightarrow A$ 
are morphisms of BiHom-associative algebras such that any two of the maps $\alpha $, $\beta $, $\tilde{\alpha}$, 
$\tilde{\beta}$ commute, then $(A, \mu \circ (\tilde{\alpha}\otimes \tilde{\beta}), 
\alpha\circ \tilde{\alpha},  \beta\circ \tilde{\beta})$ is a BiHom-commutative algebra.
\end{remark}
\begin{remark} \label{BHCBHN}
Obviously, any BiHom-commutative algebra is BiHom-Novikov.
\end{remark}

Our next result is the BiHom-analogue of the Gel'fand-Dorfman Theorem \ref{GD} (for a Hom-analogue see \cite{yaunovikov}).  
\begin{proposition}\label{generalGD}
Let $(A,\mu ,\alpha ,\beta )$ be a BiHom-commutative algebra. 
Let $\gamma , \lambda , \xi :A\rightarrow A$ be linear maps such that 
$\gamma (x\cdot y)=\gamma (x)\cdot \gamma (y)$, $\lambda (x\cdot y)=\lambda (x)\cdot \lambda (y)$ and 
$\xi (x\cdot y)=\xi (x)\cdot \xi (y)$, for all $x, y\in A$. 
Let $D:A\rightarrow A$ be a linear map, assume that any two of the maps $\alpha , \beta , \gamma , \lambda , 
\xi , D$ commute and the following condition is satisfied: 
\begin{eqnarray}
&&D\left( a\cdot b\right) =\gamma \left( a\right) \cdot D(b)+D\left( a\right) \cdot
\gamma \left( b\right), \;\;\;\forall \;\;a, b\in A. \label{gammaderivation}
\end{eqnarray}
Define a new multiplication on $A$ by 
$a\ast b=\lambda \left( a\right) \cdot \xi D\left( b\right)$, for all $a, b\in A$. 
Then $\left( A,\ast ,\lambda \alpha ,\xi \beta \gamma \right) $ is a
BiHom-Novikov algebra.
\end{proposition}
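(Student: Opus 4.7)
The plan is to verify directly the defining conditions of a BiHom-Novikov algebra for the tuple $(A,\ast,\lambda\alpha,\xi\beta\gamma)$, namely that the structure maps $\lambda\alpha$ and $\xi\beta\gamma$ commute and are multiplicative for $\ast$, that the Novikov identity \eqref{Binovikov} holds, and that the left BiHom-pre-Lie identity \eqref{BiNoviko} holds. The first two properties are routine: the commutation $(\lambda\alpha)\circ(\xi\beta\gamma)=(\xi\beta\gamma)\circ(\lambda\alpha)$ is immediate from the hypothesis that any two of $\alpha,\beta,\gamma,\lambda,\xi,D$ commute, while multiplicativity for $\ast$ reduces, after substituting $a\ast b=\lambda(a)\cdot \xi D(b)$ and using that $D$ commutes with the other five maps, to the multiplicativity of $\lambda\alpha$ and $\xi\beta\gamma$ for $\mu$, which is given.

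For \eqref{Binovikov} I would expand both sides of the target identity $(x\ast \xi\beta\gamma(y))\ast \lambda\alpha\xi\beta\gamma(z)=(x\ast \xi\beta\gamma(z))\ast \lambda\alpha\xi\beta\gamma(y)$ using the definition of $\ast$. Because all six maps commute and $\lambda,\xi,\gamma,\alpha,\beta$ are multiplicative for $\mu$, every twist slides freely to the outside and both sides acquire the shape $(\lambda^{2}(x)\cdot Y)\cdot \alpha(Z)$ respectively $(\lambda^{2}(x)\cdot Z)\cdot \alpha(Y)$, for suitable twists $Y,Z$ of $D(y)$ and $D(z)$. One application of BiHom-associativity \eqref{BHassoc} rewrites both sides as $\alpha\lambda^{2}(x)\cdot (Y\cdot Z)$ respectively $\alpha\lambda^{2}(x)\cdot (Z\cdot Y)$, and a single instance of BiHom-commutativity \eqref{commnovi} inside the parenthesis closes the identity.

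The main effort is the BiHom-pre-Lie identity \eqref{BiNoviko}. Writing $\alpha'=\lambda\alpha$, $\beta'=\xi\beta\gamma$, the crucial observation is that the term $(\beta'(x)\ast \alpha'(y))\ast\beta'(z)$ is a pure threefold $\mu$-product (with $D$ only on inner factors), whereas the term $\alpha'\beta'(x)\ast(\alpha'(y)\ast z)$ has an outer $\xi D$ acting on $\alpha'(y)\ast z$, so that the $\gamma$-derivation rule \eqref{gammaderivation} expands it into two summands, one carrying $D(y)$ and one carrying $D^{2}(z)$. One application of BiHom-associativity to $(\beta'(x)\ast\alpha'(y))\ast\beta'(z)$, combined with commutation of the twists, makes it equal to the $D(y)$-summand and they cancel. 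What survives is a single expression of shape $\alpha'\beta'(x)\cdot\bigl(\alpha'(y)\cdot T(z)\bigr)$, where $T$ is a fixed composition involving $D^{2}$. A further application of \eqref{BHassoc} rewrites this as $\bigl(\beta(\tilde x)\cdot \alpha(\tilde y)\bigr)\cdot \beta T(z)$ for appropriately twisted $\tilde x,\tilde y$; BiHom-commutativity \eqref{commnovi} swaps $\tilde x$ with $\tilde y$; and undoing the associativity step yields $\alpha'\beta'(y)\cdot\bigl(\alpha'(x)\cdot T(z)\bigr)$, which is exactly what the right-hand side of \eqref{BiNoviko} reduces to by the same cancellation pattern with the roles of $x$ and $y$ exchanged.

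The main obstacle is the bookkeeping of the six commuting maps $\alpha,\beta,\gamma,\lambda,\xi,D$ appearing with varying multiplicities on each factor: at the moment BiHom-commutativity is invoked, exactly one factor must carry an outer $\beta$ and the other an outer $\alpha$, with the remaining twists matching on both sides of \eqref{commnovi}. Because all the maps commute and are multiplicative for $\mu$, any needed rearrangement is available, but the correct order of associativity/commutativity moves becomes transparent only after first running the argument in the special case $\lambda=\xi=\gamma=\Id$ (so that $D$ is an ordinary derivation on a BiHom-commutative algebra), and then decorating each factor of that calculation with the appropriate compositions of $\lambda$, $\xi$, and $\gamma$.
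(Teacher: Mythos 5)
Your proposal is correct and follows essentially the same route as the paper: expand $\alpha'\beta'(x)\ast(\alpha'(y)\ast z)$ via the $\gamma$-derivation rule into a $D(y)$-term and a $D^2(z)$-term, cancel the former against $(\beta'(x)\ast\alpha'(y))\ast\beta'(z)$ by one BiHom-associativity move, and exhibit the $x\leftrightarrow y$ symmetry of the survivor via BiHom-associativity plus BiHom-commutativity. The only cosmetic difference is in \eqref{Binovikov}, where the paper invokes Remark \ref{BHCBHN} (BiHom-commutative implies BiHom-Novikov) and applies \eqref{Binovikov} for $\mu$ directly, while you re-derive that swap by an explicit associativity--commutativity--associativity sequence; these are the same argument packaged differently.
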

\begin{proof} It is easy to see that $\lambda \alpha (a\ast b)=\lambda \alpha (a)\ast \lambda \alpha (b)$ and 
$\xi \beta \gamma (a\ast b)=\xi \beta \gamma (a)\ast \xi \beta \gamma (b)$, for all $a, b\in A$. Now we compute: \\[2mm]
${\;\;\;\;\;}$
$\lambda \alpha \xi \beta \gamma (x)\ast (\lambda \alpha (y)\ast z)$
\begin{eqnarray*}
&=&\lambda \alpha \xi \beta \gamma (x)\ast \left( \lambda \lambda \alpha
(y)\cdot \xi D\left( z\right) \right)  
=\lambda ^{2}\alpha \xi \beta \gamma (x)\cdot \xi D\left( \lambda
^{2}\alpha (y)\cdot \xi D\left( z\right) \right)  \\
&\overset{(\ref{gammaderivation})}{=}&\lambda ^{2}\alpha \xi \beta \gamma (x)\cdot \xi \left[ \gamma \left(
\lambda ^{2}\alpha (y)\right) \cdot D(\xi D\left( z\right) )+D\left( \lambda
^{2}\alpha (y)\right) \cdot \gamma \left( \xi D\left( z\right) \right) %
\right]  \\
&=&\lambda ^{2}\alpha \xi \beta \gamma (x)\cdot \left[ \xi \gamma \lambda
^{2}\alpha (y)\cdot \xi ^{2}D^{2}\left( z\right) +\xi \lambda ^{2}\alpha
D(y)\cdot \gamma \xi ^{2}D\left( z\right) \right]  \\
&=&\lambda
^{2}\alpha \xi \beta \gamma (x)\cdot \left( \xi \gamma \lambda ^{2}\alpha
(y)\cdot \xi ^{2}D^{2}\left( z\right) \right)+
\lambda ^{2}\alpha \xi \beta \gamma (x)\cdot \left( \xi \lambda
^{2}\alpha D(y)\cdot \gamma \xi ^{2}D\left( z\right) \right), 
\end{eqnarray*}%
\begin{eqnarray*}
\left( \xi \beta \gamma (x)\ast \lambda \alpha (y)\right) \ast \xi \beta
\gamma \left( z\right)  &=&\left( \lambda \xi \beta \gamma (x)\cdot \lambda
\alpha \xi D(y)\right) \ast \xi \beta \gamma \left( z\right)  \\
&=&\left( \lambda ^{2}\xi \beta \gamma (x)\cdot \lambda ^{2}\alpha \xi
D(y)\right) \cdot \xi ^{2}\beta \gamma D\left( z\right),
\end{eqnarray*}
so that we get\\[2mm]
${\;\;\;\;\;}$
$\left( \xi \beta \gamma (x)\ast \lambda \alpha (y)\right) \ast \xi \beta
\gamma \left( z\right) -\lambda \alpha \xi \beta \gamma (x)\ast (\lambda
\alpha (y)\ast z)$
\begin{eqnarray*}
&=&\left( \lambda ^{2}\xi \beta \gamma (x)\cdot \lambda
^{2}\alpha \xi D(y)\right) \cdot \xi ^{2}\beta \gamma D\left( z\right)  
-\lambda ^{2}\alpha \xi
\beta \gamma (x)\cdot \left( \xi \gamma \lambda ^{2}\alpha (y)\cdot \xi
^{2}D^{2}\left( z\right) \right)  \\
&&-\lambda ^{2}\alpha \xi \beta \gamma (x)\cdot \left( \xi \lambda ^{2}\alpha
D(y)\cdot \gamma \xi ^{2}D\left( z\right) \right) \\
&=&\left( \lambda ^{2}\xi \beta \gamma (x)\cdot \lambda ^{2}\alpha \xi
D(y)\right) \cdot \beta \gamma \xi ^{2}D\left( z\right) 
-\lambda ^{2}\alpha \xi \beta \gamma
(x)\cdot \left( \xi \gamma \lambda ^{2}\alpha (y)\cdot \xi ^{2}D^{2}\left(
z\right) \right)  \\
&&-\left( \lambda
^{2}\xi \beta \gamma (x)\cdot \lambda ^{2}\alpha \xi D(y)\right) \cdot \beta
\gamma \xi ^{2}D\left( z\right)\\
&=&-\lambda ^{2}\alpha \xi \beta \gamma (x)\cdot \left( \xi \gamma \lambda
^{2}\alpha (y)\cdot \xi ^{2}D^{2}\left( z\right) \right)=
-\left( \lambda
^{2}\xi \beta \gamma (x)\cdot \xi \gamma \lambda ^{2}\alpha (y)\right) \cdot
\beta \xi ^{2}D^{2}\left( z\right)\\
&=&-\left( \beta (\lambda
^{2}\xi \gamma (x))\cdot \alpha (\xi \gamma \lambda ^{2}(y))\right) \cdot
\beta \xi ^{2}D^{2}\left( z\right)\\
&\overset{(\ref{commnovi})}{=}&-\left( \beta \lambda
^{2}\xi \gamma (y)\cdot \alpha \xi \gamma \lambda ^{2}(x)\right) \cdot
\beta \xi ^{2}D^{2}\left( z\right), 
\end{eqnarray*}
so the expression is symmetric in $x$ and $y$, meaning that $\left( A,\ast ,\lambda \alpha ,\xi \beta \gamma \right) $ is 
a left BiHom-pre-Lie algebra. We have to check now the BiHom-Novikov condition, namely 
$(x\ast \xi \beta \gamma (y))\ast \lambda \alpha
\xi \beta \gamma (z)=(x\ast \xi \beta \gamma (z))\ast \lambda \alpha \xi
\beta \gamma (y)$.  Note first that, since $(A,\mu ,\alpha ,\beta )$ is BiHom-commutative, it is BiHom-Novikov, so 
(\ref{Binovikov}) holds. 
Now we compute:
\begin{eqnarray*}
(x\ast \xi \beta \gamma (y))\ast \lambda \alpha \xi \beta \gamma (z)
&=&\left( \lambda \left( x\right) \cdot \xi D\left( \xi \beta \gamma
(y)\right) \right) \ast \lambda \alpha \xi \beta \gamma (z) \\
&=&\left[ \lambda ^{2}\left( x\right) \cdot \lambda \left( D\xi ^{2}\beta
\gamma (y)\right) \right] \cdot D\left( \alpha \beta \xi ^{2}\gamma \lambda
(z)\right)  \\
&=&\left[ \lambda ^{2}\left( x\right) \cdot \beta \left( \lambda D\xi
^{2}\gamma (y)\right) \right] \cdot \alpha \beta \left( D\xi
^{2}\gamma \lambda (z)\right)  \\
&\overset{(\ref{Binovikov})}{=}&\left[ \lambda ^{2}\left( x\right) \cdot \beta \left( \lambda D\xi
^{2}\gamma (z)\right) \right] \cdot \alpha \beta \left( D\xi
^{2}\gamma \lambda (y)\right) \\
&=&(x\ast \xi \beta \gamma (z))\ast \lambda \alpha \xi \beta \gamma (y),
\end{eqnarray*}%
finishing the proof.
\end{proof}
\begin{corollary}
Let $(A,\mu ,\alpha ,\beta )$ be a BiHom-commutative algebra. 
Let $p$ and $r$ be some natural numbers and let $D:A\rightarrow A$ be a linear map commuting with $\alpha $ and 
$\beta $ and satisfying the condition 
\begin{eqnarray*}
D\left( ab\right) =\beta ^{r}\left( a\right) \cdot D(b)+D\left( a\right)
\cdot \beta ^{r}\left( b\right), \;\;\; \forall \;\;a, b\in A.
\end{eqnarray*}
Define a new multiplication on $A$ by 
$a\ast b=\alpha ^{p}\left( a\right) \cdot D\left( b\right)$, for all $a, b\in A$. 
Then $\left( A,\ast ,\alpha ^{p+1},\beta ^{r+1}\right) $ is a BiHom-Novikov
algebra.
\end{corollary}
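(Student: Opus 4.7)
The plan is to derive this corollary as a direct specialization of Proposition~\ref{generalGD}. The hypotheses of that proposition involve three multiplicative maps $\gamma, \lambda, \xi : A \to A$ together with a linear map $D$ satisfying a $\gamma$-twisted Leibniz rule, all pairwise commuting with $\alpha$ and $\beta$, and it produces a BiHom-Novikov structure $(A, \ast, \lambda\alpha, \xi\beta\gamma)$ with $a \ast b = \lambda(a)\cdot \xi D(b)$. The strategy is simply to match the data of the corollary to this setup.

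Concretely, I would set $\gamma := \beta^{r}$, $\lambda := \alpha^{p}$, and $\xi := \mathrm{Id}_A$. Since $\alpha$ and $\beta$ are morphisms of the BiHom-associative algebra $(A, \mu, \alpha, \beta)$, their powers $\alpha^{p}$ and $\beta^{r}$ are multiplicative, and $\mathrm{Id}_A$ is trivially multiplicative, so the multiplicativity hypotheses on $\gamma, \lambda, \xi$ in Proposition~\ref{generalGD} are satisfied. The pairwise commutativity assumption also holds: $\alpha$ commutes with $\beta$ by hypothesis, $D$ commutes with both $\alpha$ and $\beta$ by hypothesis, and the powers $\alpha^{p}$, $\beta^{r}$ thus commute with each other and with $\alpha, \beta, D, \mathrm{Id}_A$. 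The hypothesis $D(ab) = \beta^{r}(a)\cdot D(b) + D(a)\cdot \beta^{r}(b)$ is exactly condition~(\ref{gammaderivation}) with $\gamma = \beta^{r}$.

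With these choices, the new product becomes
\begin{equation*}
a \ast b = \lambda(a)\cdot \xi D(b) = \alpha^{p}(a)\cdot D(b),
\end{equation*}
which matches the product defined in the corollary. Proposition~\ref{generalGD} then yields that $(A, \ast, \lambda\alpha, \xi\beta\gamma)$ is a BiHom-Novikov algebra, and substituting our choices gives structure maps $\lambda\alpha = \alpha^{p}\alpha = \alpha^{p+1}$ and $\xi\beta\gamma = \mathrm{Id}_A\circ\beta\circ\beta^{r} = \beta^{r+1}$, which is exactly the conclusion.

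Since the corollary is a straightforward specialization, there is essentially no obstacle; the only thing to verify is that the substitution $\gamma = \beta^{r}$, $\lambda = \alpha^{p}$, $\xi = \mathrm{Id}_A$ really meets every assumption of the proposition, and each verification is immediate from the fact that $\alpha, \beta$ are commuting morphisms and that $D$ commutes with them both.
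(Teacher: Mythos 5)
Your proof is correct and is exactly the paper's argument: the paper proves this corollary by the single line "Take in the previous proposition $\lambda =\alpha ^p$, $\gamma =\beta ^r$, $\xi =id_A$." You have merely spelled out the routine verifications of the hypotheses, which the paper leaves implicit.
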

\begin{proof}
Take in the previous proposition $\lambda =\alpha ^p$, $\gamma =\beta ^r$, $\xi =id_A$. 
\end{proof}

In particular, by taking $p=r=0$, we obtain:
\begin{corollary} \label{commhom} 
Let $(A, \mu, \alpha , \beta)$ be a BiHom-commutative
algebra 
and let $D: A \rightarrow A$ be a derivation in the usual sense (i.e. $D(x\cdot y)=x\cdot D(y)+D(x)\cdot y$, for all $x, y\in A$) 
commuting with $\alpha $ and $\beta $. Then $(A, \ast , \alpha ,
\beta)$ is a BiHom-Novikov algebra, where
$a\ast b=a\cdot D(b)$, for all $a, b\in A$.
\end{corollary}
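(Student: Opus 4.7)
The plan is to observe that this Corollary is nothing more than the specialization of the preceding Corollary to the case $p=r=0$. Indeed, the preceding Corollary asserts that for any BiHom-commutative algebra $(A,\mu,\alpha,\beta)$, any natural numbers $p$ and $r$, and any linear map $D$ commuting with $\alpha$ and $\beta$ satisfying
\begin{eqnarray*}
D(a\cdot b) = \beta^{r}(a)\cdot D(b)+D(a)\cdot \beta^{r}(b),
\end{eqnarray*}
the multiplication $a\ast b=\alpha^{p}(a)\cdot D(b)$ yields a BiHom-Novikov algebra $(A,\ast,\alpha^{p+1},\beta^{r+1})$.

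Setting $p=r=0$, the condition $D(a\cdot b) = \beta^{0}(a)\cdot D(b)+D(a)\cdot \beta^{0}(b)$ collapses precisely to the usual Leibniz rule $D(a\cdot b)=a\cdot D(b)+D(a)\cdot b$, which is our hypothesis. The new multiplication becomes $a\ast b=\alpha^{0}(a)\cdot D(b)=a\cdot D(b)$, matching the one in the statement, and the resulting structure maps $\alpha^{p+1}=\alpha$ and $\beta^{r+1}=\beta$ are exactly the original ones.

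Thus the conclusion of the previous Corollary gives directly that $(A,\ast,\alpha,\beta)$ is a BiHom-Novikov algebra, and there is no further calculation to perform. There is no real obstacle here; the only subtlety worth flagging is that the case $p=r=0$ is a genuinely admissible instance (natural numbers include $0$ in this context, and the maps $\alpha^{0}$, $\beta^{0}$ are interpreted as $\mathrm{id}_{A}$), so the specialization is legitimate.
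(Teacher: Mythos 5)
Your proof is correct and is exactly the paper's own argument: the paper introduces Corollary \ref{commhom} with the phrase ``In particular, by taking $p=r=0$, we obtain,'' i.e.\ it is the specialization $p=r=0$ of the preceding corollary, just as you describe. No further comment is needed.
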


The following result is a consequence of Corollary \ref{commhom}.

\begin{corollary}
Let $(A, \mu )$ be an associative and commutative algebra, let $\alpha , \beta
:A\rightarrow A$ be two commuting algebra morphisms and let $D:A\rightarrow A$ be a
derivation such that $D\circ \alpha=\alpha \circ D$ and $D \circ \beta=
\beta \circ D$. Then $(A, \ast , \alpha , \beta)$ is a BiHom-Novikov
algebra, where $a\ast b=\alpha (a)\cdot D(\beta(b))$, for all $a, b\in A$.
\end{corollary}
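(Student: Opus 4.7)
The plan is to reduce to Corollary \ref{commhom} by first passing to the Yau twist of $(A,\mu)$ via $\alpha$ and $\beta$. Concretely, I would first note that since $(A,\mu)$ is associative and $\alpha,\beta$ are commuting algebra morphisms, the general Yau-twist construction recalled after the definition of BiHom-associative algebra yields a BiHom-associative algebra $A_{(\alpha,\beta)} = (A,\mu_{(\alpha,\beta)},\alpha,\beta)$ with twisted product $a\cdot_{(\alpha,\beta)} b := \alpha(a)\cdot \beta(b)$.

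Next, I would check the two hypotheses needed to invoke Corollary \ref{commhom} for this twist. First, BiHom-commutativity: for any $a,b\in A$,
$$\beta(a)\cdot_{(\alpha,\beta)}\alpha(b) = \alpha\beta(a)\cdot\alpha\beta(b),$$
and swapping $a,b$ gives $\alpha\beta(b)\cdot\alpha\beta(a)$, so (\ref{commnovi}) is immediate from the commutativity of $\mu$. Second, $D$ is a derivation of $(A,\mu_{(\alpha,\beta)})$: using that $D$ is a derivation of $\mu$ together with $D\alpha=\alpha D$ and $D\beta=\beta D$, I would compute
$$D(a\cdot_{(\alpha,\beta)}b) = D(\alpha(a)\cdot\beta(b)) = \alpha D(a)\cdot\beta(b) + \alpha(a)\cdot\beta D(b) = D(a)\cdot_{(\alpha,\beta)} b + a\cdot_{(\alpha,\beta)}D(b),$$
and $D$ still commutes with $\alpha$ and $\beta$ by hypothesis.

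Applying Corollary \ref{commhom} to $(A,\mu_{(\alpha,\beta)},\alpha,\beta)$ with the derivation $D$, I obtain a BiHom-Novikov algebra $(A,\ast,\alpha,\beta)$ where
$$a\ast b = a\cdot_{(\alpha,\beta)} D(b) = \alpha(a)\cdot\beta(D(b)) = \alpha(a)\cdot D(\beta(b)),$$
where the last equality uses $D\beta=\beta D$. This is precisely the operation in the statement, so we are done.

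I do not expect any serious obstacle: the argument is essentially a chain of verifications, and the only small subtlety is remembering to use $D\beta=\beta D$ at the very end to rewrite $\beta D(b)$ as $D(\beta(b))$ to match the displayed formula.
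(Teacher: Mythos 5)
Your proposal is correct and follows essentially the same route as the paper: the paper also reduces to Corollary \ref{commhom} applied to the Yau twist $(A,\mu\circ(\alpha\otimes\beta),\alpha,\beta)$, noting (as in the remark after Definition \ref{commbinovi}) that this twist is BiHom-commutative, and observes that the only thing to check is that $D$ remains a derivation for the twisted product. You have merely written out explicitly the ``straightforward computation'' that the paper leaves to the reader.
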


\begin{proof}
The only thing that needs to be proved is that $D$ is also a derivation for $%
(A, \star )$, where $a\star b=\alpha (a)\cdot \beta (b)$, and this follows by a
straightforward computation.
\end{proof}

By taking in Proposition \ref{generalGD} $\alpha =\beta =\lambda =\xi =id_A$, we obtain the following result: 
\begin{proposition}
Let $(A,\mu )$ be a commutative associative algebra, let $\gamma :A\rightarrow A$ be an algebra map and let 
$D:A\rightarrow A$ be a $(\gamma , \gamma )$-derivation, i.e. 
\begin{eqnarray}
&&D\left( a\cdot b\right) =\gamma \left( a\right) \cdot D(b)+D\left( a\right) \cdot
\gamma \left( b\right), \;\;\;\forall \;\;a, b\in A. 
\end{eqnarray}
Assume that moreover we have $D\circ \gamma =\gamma \circ D$. 
Define a new multiplication on $A$ by 
$a* b=a \cdot D(b)$, for all $a, b\in A$. 
Then $( A,* , id_A, \gamma )$ is a
BiHom-Novikov algebra.
\end{proposition}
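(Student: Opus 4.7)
The plan is to deduce the statement directly from Proposition \ref{generalGD} by a suitable choice of parameters, since the whole point of that more flexible result is to encompass specializations like this one.

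First I would observe that any commutative associative algebra $(A,\mu)$ is a BiHom-commutative algebra in the sense of Definition \ref{commbinovi} with structure maps $\alpha=\beta=id_A$: the BiHom-associativity (\ref{BHassoc}) reduces to usual associativity and (\ref{commnovi}) reduces to the usual commutativity $a\cdot b=b\cdot a$. So $(A,\mu,id_A,id_A)$ satisfies the hypotheses on $(A,\mu,\alpha,\beta)$ in Proposition \ref{generalGD}.

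Next I would take, in Proposition \ref{generalGD}, the linear maps $\lambda=\xi=id_A$ (which are trivially multiplicative) and $\gamma$ equal to the given algebra map $\gamma$. All pairwise commutativity requirements among $\alpha,\beta,\gamma,\lambda,\xi,D$ are then automatic, with the sole exception of $D\circ\gamma=\gamma\circ D$, which is precisely the extra assumption we placed on $D$. Moreover condition (\ref{gammaderivation}) with this $\gamma$ is exactly the $(\gamma,\gamma)$-derivation identity we assumed. The twisted multiplication of Proposition \ref{generalGD} becomes
\begin{eqnarray*}
a\ast b=\lambda(a)\cdot \xi D(b)=a\cdot D(b),
\end{eqnarray*}
which matches the multiplication in the statement, and the resulting structure maps $\lambda\alpha$ and $\xi\beta\gamma$ become $id_A$ and $\gamma$ respectively.

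Therefore Proposition \ref{generalGD} applies and yields directly that $(A,\ast,id_A,\gamma)$ is a BiHom-Novikov algebra, which is the desired conclusion. There is no real obstacle here: the whole argument is a bookkeeping check that the specialization $\alpha=\beta=\lambda=\xi=id_A$ is admissible, and the only nontrivial commutation, $D\gamma=\gamma D$, has been built into the hypotheses precisely for this purpose.
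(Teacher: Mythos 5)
Your proposal is correct and is exactly the paper's own argument: the paper introduces this proposition with the words ``By taking in Proposition \ref{generalGD} $\alpha =\beta =\lambda =\xi =id_A$, we obtain the following result,'' which is precisely the specialization you carry out. The bookkeeping you give (identity structure maps, $\gamma$ as the multiplicative twist, the assumed commutation $D\circ\gamma=\gamma\circ D$, and the resulting structure maps $id_A$ and $\gamma$) is all that is needed.
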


For a left BiHom-Lie algebra $(L, [\cdot ,\cdot ], \alpha , \beta)$, we
write $Z_{l}(\beta(L))$ for the subset of $L$ consisting of the elements $x
\in L$ such that $[x, \beta(y)]=0$ for all $y \in L$. Similarly, we write $%
Z_{r}(\alpha(L))$ for the subset of $L$ consisting of the elements $x \in L$
such that $[\alpha(y), x]=0$ for all $y \in L$.

\begin{proposition}
\label{bilienovikov} Let $(L, [\cdot ,\cdot ], \alpha , \beta)$ be a left
BiHom-Lie algebra and let $f: L \rightarrow L$ be a linear map such that $f\circ
\alpha=\alpha \circ f$ and $f \circ \beta= \beta \circ f$. Define two
operations on $A$ by
\begin{eqnarray*}
&& x\star y=[f(x),y] \;\;\;and \;\;\;x\star^{\prime }y=[x, f(y)],
\;\;\;\forall \;\;x, y\in A.
\end{eqnarray*}
Then we have:

(i) $(L, \star, \alpha , \beta)$ is a BiHom-Novikov algebra if and only if
the following conditions hold for all $x, y, z \in L$:
\begin{eqnarray}
&& f([f(\beta(x)), \alpha(y)]+[\beta (x), f(\alpha (y))])-[f(\beta(x)),
f(\alpha(y))]\in Z_{l}(\beta(L)),  \label{LieNo} \\
&& [f([f(x), \beta (y)]), \alpha\beta(z)]=[f([f(x), \beta (z)]), \alpha
\beta(y)] .  \label{LieNovi}
\end{eqnarray}

(ii) If $\alpha $ and $\beta $ are bijective, then $(L, \star^{\prime },
\alpha , \beta)$ is a BiHom-Novikov algebra if and only if the following
conditions hold for all $x, y, z \in L$:
\begin{eqnarray}
&& [[\beta (x), f(\alpha(y))]+[f(\beta (x)), \alpha (y)],
f(\beta(z))]-[\alpha \beta (x), f([\alpha (y), f(z)])]  \notag \\
&& \;\;\;\;\;\;\;\;+[\alpha \beta (y), f([\alpha (x), f(z)])]=0,
\label{LieNoviko} \\
&& [f(\beta(x)), f(\alpha(y))]\in Z_{r}(\alpha(L)).  \label{LieNovikov}
\end{eqnarray}
\end{proposition}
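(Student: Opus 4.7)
The plan is to substitute the definitions of $\star$ and $\star'$ into the two defining axioms of a BiHom-Novikov algebra from Definition \ref{binovialgebra}---namely the left BiHom-pre-Lie identity (\ref{lBHpL}) and the BiHom-Novikov identity (\ref{Binovikov})---and then to transform the resulting bracket identities using the left BiHom-Leibniz identity (\ref{leftBHleibniz}) and BiHom-skew-symmetry (\ref{BHskewsym}). Multiplicativity of $\alpha$ and $\beta$ with respect to $\star$ (respectively $\star'$) is immediate from $f\alpha=\alpha f$, $f\beta=\beta f$ and the fact that $\alpha,\beta$ are bracket morphisms, so the substantive content is to identify the pre-Lie and Novikov axioms with (\ref{LieNo})--(\ref{LieNovikov}).

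For part (i), substitute $x\star y=[f(x),y]$ into (\ref{lBHpL}) and expand the double bracket $[f\alpha\beta(x),[f\alpha(y),z]]$ via (\ref{leftBHleibniz}), and similarly with $x,y$ interchanged. A further Leibniz expansion shows that the cross terms combine as $[f\alpha\beta(y),[f\alpha(x),z]]-[f\alpha\beta(x),[f\alpha(y),z]]=[[f\beta(y),f\alpha(x)],\beta(z)]$. Applying BiHom-skew-symmetry in the form $[f\beta(y),\alpha(x)]=-[\beta(x),f\alpha(y)]$ and collecting by bilinearity in the first slot, the difference of the two sides of (\ref{lBHpL}) reduces to $-[T(x,y),\beta(z)]$, where
\[T(x,y)=f([f\beta(x),\alpha(y)]+[\beta(x),f\alpha(y)])-[f\beta(x),f\alpha(y)],\]
and its vanishing for all $z\in L$ is exactly $T(x,y)\in Z_{l}(\beta(L))$, i.e.\ (\ref{LieNo}). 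The BiHom-Novikov axiom (\ref{Binovikov}) for $\star$ reads $[f[f(x),\beta(y)],\alpha\beta(z)]=[f[f(x),\beta(z)],\alpha\beta(y)]$, which is precisely (\ref{LieNovi}).

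For part (ii), substituting $x\star'y=[x,f(y)]$ into (\ref{lBHpL}) and using BiHom-skew-symmetry in the form $[\beta(y),f\alpha(x)]=-[f\beta(x),\alpha(y)]$, the term $[[\beta(y),f\alpha(x)],f\beta(z)]$ combines with $[[\beta(x),f\alpha(y)],f\beta(z)]$, so that the difference of the two sides of (\ref{lBHpL}) equals $-1$ times the left-hand side of (\ref{LieNoviko}). For the Novikov axiom one exploits bijectivity of $\alpha$ and $\beta$: apply skew-symmetry to the outer bracket to obtain
\[[[x,f\beta(y)],f\alpha\beta(z)]=-[f\beta^{2}(z),[\alpha\beta^{-1}(x),f\alpha(y)]],\]
and analogously with $y\leftrightarrow z$. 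Expanding the right-hand side by (\ref{leftBHleibniz}) and re-applying skew-symmetry to the resulting first summand gives
\[[f\beta^{2}(z),[\alpha\beta^{-1}(x),f\alpha(y)]]=[f\beta^{2}(y),[\alpha\beta^{-1}(x),f\alpha(z)]]+[\alpha(x),[f\beta(z),f\alpha(y)]].\]
Hence the Novikov condition is equivalent to $[\alpha(x),[f\beta(z),f\alpha(y)]]=0$ for all $x,y,z$, which by renaming and one more application of skew-symmetry is exactly (\ref{LieNovikov}).

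The main obstacle is the Novikov half of part (ii): the axiom (\ref{Binovikov}) for $\star'$ contains no apparent occurrence of an element of the form $[f\beta(x),f\alpha(y)]$, so one has to invert the outer bracket by skew-symmetry (this is where the hypothesis that $\alpha$ and $\beta$ are bijective enters crucially) and then chain Leibniz with a further skew-symmetry to isolate the element that must lie in $Z_{r}(\alpha(L))$. All remaining reductions are routine bookkeeping.
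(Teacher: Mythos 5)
Your proposal is correct and, for part (i) and the pre-Lie half of part (ii), it is essentially the paper's own reduction: substitute, use skew-symmetry to merge the $(y,x)$ terms into the $(x,y)$ terms, and use the left Leibniz identity to collapse the two double brackets into $[[f(\beta(x)),f(\alpha(y))],\beta(z)]$ (the overall sign differs only because you work with (\ref{lBHpL}) instead of (\ref{BiNoviko}), which is immaterial for the equivalence). The one place you diverge is the Novikov half of (ii): the paper invokes the result from \cite{lmmp2} that a left BiHom-Lie algebra with bijective $\alpha,\beta$ is also a right BiHom-Lie algebra and applies (\ref{rightBHleibniz}) in a single step, whereas you re-derive exactly the needed instance of that identity by conjugating the outer bracket with BiHom-skew-symmetry, expanding with (\ref{leftBHleibniz}), and applying skew-symmetry once more; this is a self-contained substitute for the cited lemma (bijectivity entering in the same place), and your extra term $[\alpha(x),[f(\beta(z)),f(\alpha(y))]]$ agrees with the paper's $[\alpha(x),[f(\beta(y)),f(\alpha(z))]]$ after relabelling, so both yield condition (\ref{LieNovikov}).
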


\begin{proof}
First note that $\alpha , \beta$ are multiplicative with respect to both $%
\star$ and $\star^{\prime }$ since $\alpha \circ [\cdot ,\cdot ]=[\cdot
,\cdot ]\circ (\alpha \otimes \alpha)$, $\beta \circ [\cdot ,\cdot ]=[\cdot
,\cdot ]\circ (\beta \otimes \beta)$ and $f\circ \alpha=\alpha \circ f$, $f
\circ \beta= \beta \circ f$.

Consider the first assertion. The condition (\ref{BiNoviko}) for the
multiplication $\star$ is
\begin{eqnarray*}
&& (\beta(x) \star \alpha(y))\star \beta(z)-\alpha\beta(x)\star
(\alpha(y)\star z)=(\beta(y)\star \alpha(x))\star
\beta(z)-\alpha\beta(y)\star (\alpha(x)\star z),
\end{eqnarray*}
and this is equivalent to
\begin{eqnarray*}
&& [f([f(\beta(x)), \alpha(y)]), \beta(z)]-[f(\alpha \beta (x)), [f(\alpha
(y)), z]] \\
&&\;\;\;\;\;\;\;\;= [f([f(\beta(y)), \alpha(x)]), \beta(z)]-[f(\alpha \beta
(y)), [f(\alpha (x)), z]],
\end{eqnarray*}
which, by using BiHom-skew-symmetry, is equivalent to
\begin{eqnarray*}
&& [f([f(\beta(x)), \alpha(y)]+[\beta(x), f(\alpha (y))]), \beta
(z)]-[f(\alpha \beta (x)), [f(\alpha (y)), z]] \\
&& \;\;\;\;\;\;+[f(\alpha \beta (y)), [f(\alpha (x)), z]]=0,
\end{eqnarray*}
which, by using (\ref{leftBHleibniz}), is equivalent to
\begin{eqnarray*}
&& [f([f(\beta(x)), \alpha(y)]+[\beta (x), f(\alpha (y))])-[f(\beta (x)),
f(\alpha (y))], \beta (z)]=0,
\end{eqnarray*}
and this is indeed equivalent to
\begin{eqnarray*}
f([f(\beta(x)), \alpha(y)]+[\beta (x), f(\alpha (y))])-[f(\beta(x)),
f(\alpha(y))]\in Z_{l}(\beta(L)), \;\;\; \forall \;\;x, y\in L.
\end{eqnarray*}
The condition (\ref{Binovikov}) for the multiplication $\star$ is $(x \star
\beta (y))\star \alpha\beta(z)=(x \star \beta(z))\star \alpha\beta(y)$, and
this is obviously equivalent to
\begin{eqnarray*}
[f([f(x), \beta (y)]), \alpha\beta(z)]=[f([f(x), \beta (z)]), \alpha
\beta(y)].
\end{eqnarray*}
This proves (i). To prove (ii), we need a result from \cite{lmmp2} saying
that, since $\alpha $ and $\beta $ are bijective, $(L, [\cdot ,\cdot ],
\alpha , \beta)$ is also a right BiHom-Lie algebra. One can easily see that (%
\ref{LieNoviko}) is just a restatement of (\ref{BiNoviko}) for the
multiplication $\star^{\prime }$. On the other hand, for all $x, y, z\in L$
we have:
\begin{eqnarray*}
(x \star^{\prime }\beta (y))\star^{\prime }\alpha\beta(z) &=& [[x,
f(\beta(y))], f(\alpha \beta (z))] \\
&\overset{(\ref{rightBHleibniz})}{=}& [[x, f(\beta(z))], f(\alpha \beta
(y))]+[\alpha(x), [f(\beta(y)), f(\alpha(z))]] \\
&=& (x \star^{\prime }\beta(z))\star^{\prime }\alpha\beta(y)+[\alpha(x),
[f(\beta(y)), f(\alpha(z))]].
\end{eqnarray*}
This implies that $(x \star^{\prime }\beta (y))\star^{\prime
}\alpha\beta(z)=(x \star^{\prime }\beta(z))\star^{\prime }\alpha\beta(y)$
holds for all $x, y, z\in L$ if and only if $[\alpha (x), [f(\beta (y)),
f(\alpha (z))]]=0$, or equivalently $[f(\beta (y)), f(\alpha (z))]\in
Z_{r}(\alpha(L))$ for all $y, z\in L$.
\end{proof}
\section{BiHom-Novikov-Poisson algebras}\label{sec3}
\setcounter{equation}{0}

The main aim of this section is to show that the BiHom-Novikov algebras obtained in Corollary  \ref{commhom} 
satisfy more compatibility conditions, turning them into what we will call BiHom-Novikov-Poisson algebras, which are the 
BiHom-analogues of Novikov-Poisson algebras.  Note that the Hom-analogue of Novikov-Poisson algebras has been introduced 
by Yau in \cite{yau3}. 
\begin{definition}\label{binovipoisson algebra}
A BiHom-Novikov-Poisson algebra is a 5-tuple $(A, \cdot, \ast, \alpha, \beta)$ such that:

(1) $(A, \cdot,  \alpha , \beta)$ is a  BiHom-commutative algebra;

(2) $(A, \ast,  \alpha , \beta)$ is a BiHom-Novikov algebra; 

(3) the following compatibility conditions hold for all $x, y, z\in A$:
 \begin{eqnarray}
&& (\beta(x)\ast \alpha(y))\cdot \beta(z)-\alpha\beta(x)\ast (\alpha(y)\cdot z)=(\beta(y)\ast \alpha(x))\cdot \beta(z)-\alpha\beta(y)\ast (\alpha(x)\cdot z),~~~~ \label{4.1}\\
 && (x \cdot \beta (y))\ast \alpha\beta(z)=(x\ast \beta (z))\cdot \alpha\beta(y),   \label{4.2} \\
&& \alpha (x)\cdot (y\ast z)=(x\cdot y)\ast \beta (z). \label{new}
\end{eqnarray}

The maps $\alpha $ and $\beta $ (in this order) are called the structure maps of $A$. 

A morphism $f:(A, \cdot, \ast, \alpha, \beta)\rightarrow (A', \cdot ', \ast ', \alpha ', \beta ')$ of BiHom-Novikov-Poisson 
algebras is a map that is a morphism of BiHom-associative algebras from $(A, \cdot , \alpha , \beta )$ to 
$(A', \cdot ', \alpha ', \beta ')$ and a morphism of BiHom-Novikov algebras from $(A, \ast , \alpha , \beta )$ to 
$(A', \ast ', \alpha ', \beta ')$. 
\end{definition}

Our first result shows that, in case of bijective structure maps, (\ref{4.2}) and (\ref{new}) are equivalent. 
\begin{lemma}\label{lemma 3.1}
Let $A$ be a linear space endowed with two linear multiplications $\cdot, \ast: A \otimes A\rightarrow A$ and two 
commuting bijective linear maps $\alpha , \beta : A \rightarrow A$ that are multiplicative with respect to $\cdot $ and $\ast $ 
and such that $(A, \cdot , \alpha , \beta )$ is a BiHom-commutative algebra. Then
\begin{eqnarray}
&& (x \cdot \beta (y))\ast \alpha\beta(z)=(x\ast \beta (z))\cdot \alpha\beta(y)   \label{4.3}
\end{eqnarray}
holds for all $x, y, z\in A$ if and only if
\begin{eqnarray}
 \alpha (x)\cdot (y\ast z)=(x\cdot y)\ast \beta (z)  \label{4.4}
\end{eqnarray}
holds for all $x, y, z\in A$.
\end{lemma}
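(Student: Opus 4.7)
The plan hinges on the bijectivity of $\alpha$ and $\beta$, which lets us repackage BiHom-commutativity $\beta(a)\cdot\alpha(b)=\beta(b)\cdot\alpha(a)$ in a much more usable form. Setting $P=\beta(a)$ and $Q=\alpha(b)$ yields the universal swap rule
\begin{eqnarray*}
P\cdot Q=\beta\alpha^{-1}(Q)\cdot\alpha\beta^{-1}(P), \qquad \forall\;P,Q\in A,
\end{eqnarray*}
which lets us freely transpose the two factors of any $\cdot $-product at the cost of applying the commuting twists $\beta\alpha^{-1}$ and $\alpha\beta^{-1}$. Since $\alpha$ and $\beta$ are multiplicative with respect to both $\cdot $ and $\ast $, these twists distribute across $\ast $-products as well; this is the only extra ingredient the argument needs.

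For the direction $(\ref{4.3})\Rightarrow(\ref{4.4})$, I would start with $\alpha(x)\cdot(y\ast z)$ and apply the swap rule to bring it into the form $(\beta\alpha^{-1}(y)\ast\beta\alpha^{-1}(z))\cdot\alpha^{2}\beta^{-1}(x)$, which is visibly of shape $(u\ast\beta(w))\cdot\alpha\beta(v)$ for $u=\beta\alpha^{-1}(y)$, $w=\alpha^{-1}(z)$, $v=\alpha\beta^{-2}(x)$. Applying $(\ref{4.3})$ read right to left transforms this into $(u\cdot\beta(v))\ast\alpha\beta(w)=(\beta\alpha^{-1}(y)\cdot\alpha\beta^{-1}(x))\ast\beta(z)$, and a final use of the swap rule inside the remaining $\cdot $-factor collapses $\beta\alpha^{-1}(y)\cdot\alpha\beta^{-1}(x)$ to $x\cdot y$, producing exactly $(x\cdot y)\ast\beta(z)$.

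For the converse $(\ref{4.4})\Rightarrow(\ref{4.3})$, I would run the analogous chain in the opposite direction. Starting from $(x\ast\beta(z))\cdot\alpha\beta(y)$, one swap produces $\beta^{2}(y)\cdot(\alpha\beta^{-1}(x)\ast\alpha(z))$, which has the shape $\alpha(X)\cdot(Y\ast Z)$ for $X=\alpha^{-1}\beta^{2}(y)$, $Y=\alpha\beta^{-1}(x)$, $Z=\alpha(z)$. Applying $(\ref{4.4})$ right-to-left yields $(X\cdot Y)\ast\beta(Z)=(\alpha^{-1}\beta^{2}(y)\cdot\alpha\beta^{-1}(x))\ast\alpha\beta(z)$, and one last swap inside the $\cdot $-factor turns this into $(x\cdot\beta(y))\ast\alpha\beta(z)$.

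The main obstacle is pure bookkeeping: each swap and each application of $(\ref{4.3})$ or $(\ref{4.4})$ introduces multiple $\alpha^{\pm 1},\beta^{\pm 1}$ factors, and the substitutions must be chosen so that after the full chain all twists cancel exactly. No ingredient beyond bijectivity, BiHom-commutativity, and the multiplicativity of $\alpha,\beta$ with respect to both products is required.
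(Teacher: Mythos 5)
Your proposal is correct and is essentially the paper's own proof: the paper establishes $(\ref{4.3})\Rightarrow(\ref{4.4})$ by exactly this chain (swap via BiHom-commutativity, apply $(\ref{4.3})$, swap again), merely traversed in the opposite direction, starting from $(x\cdot y)\ast\beta(z)$ and ending at $\alpha(x)\cdot(y\ast z)$, with the same substitutions $\beta\alpha^{-1}(y)$, $\alpha\beta^{-2}(x)$, $\alpha^{-1}(z)$. The only difference is that you write out the converse explicitly while the paper leaves it to the reader as "similar".
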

\begin{proof}
We prove that (\ref{4.3}) implies (\ref{4.4}):  
\begin{eqnarray*}
 (x \cdot y)\ast \beta(z)
&=& (\beta (\beta^{-1}(x)) \cdot \alpha(\alpha^{-1}(y)))\ast \beta(z)\\
&\overset{(\ref{commnovi})}{=}&(\beta (\alpha^{-1}(y)) \cdot \alpha(\beta^{-1}(x)))\ast \beta(z)\\
&=& (\alpha^{-1}\beta (y)\cdot \beta(\alpha \beta^{-2}(x)))\ast \alpha \beta(\alpha ^{-1}(z))\\
&\overset{(\ref{4.3})}{=}& (\alpha^{-1}\beta (y) \ast \beta(\alpha ^{-1}(z)))\cdot \alpha \beta (\alpha \beta^{-2}(x)) \\
&=& \beta(\alpha^{-1}(y)\ast \alpha ^{-1}(z))\cdot \alpha(\alpha \beta^{-1}(x)) \\
&\overset{(\ref{commnovi})}{=}& \beta(\alpha \beta^{-1}(x))\cdot \alpha(\alpha^{-1}(y)\ast \alpha ^{-1}(z))\\
&=& \alpha(x) \cdot (y\ast z), \;\;\;q.e.d.
\end{eqnarray*}
The proof of the fact that (\ref{4.4}) implies (\ref{4.3}) is similar and left to the reader. 
\end{proof}
\begin{proposition}\label{Proposition 1}
 Let $(A, \cdot, \alpha, \beta)$ be a BiHom-commutative algebra. 
Then $(A, \cdot, \cdot, \alpha, \beta)$ is a BiHom-Novikov-Poisson algebra.
\end{proposition}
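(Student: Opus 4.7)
The plan is to verify each of the three axioms in Definition \ref{binovipoisson algebra} when $\ast=\cdot$. Axiom (1) is by hypothesis. For axiom (2), i.e.\ that $(A,\cdot,\alpha,\beta)$ is a BiHom-Novikov algebra, I would simply invoke Remark \ref{BHCBHN}, which already records that any BiHom-commutative algebra is BiHom-Novikov. So the real content is axiom (3): checking (\ref{4.1}), (\ref{4.2}) and (\ref{new}) with $\ast$ replaced by $\cdot$. Fortunately, each of these then collapses to an identity we already have for free.

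First I would handle (\ref{new}). With $\ast=\cdot$ this reads $\alpha(x)\cdot(y\cdot z)=(x\cdot y)\cdot\beta(z)$, which is literally the BiHom-associativity condition (\ref{BHassoc}) valid in the BiHom-commutative (hence BiHom-associative) algebra $(A,\cdot,\alpha,\beta)$. So (\ref{new}) is immediate.

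Next I would handle (\ref{4.2}). With $\ast=\cdot$ this becomes $(x\cdot\beta(y))\cdot\alpha\beta(z)=(x\cdot\beta(z))\cdot\alpha\beta(y)$, which is exactly (\ref{Binovikov}); this holds because $(A,\cdot,\alpha,\beta)$ is already known to be BiHom-Novikov (again by Remark \ref{BHCBHN}).

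Finally I would verify (\ref{4.1}). With $\ast=\cdot$, a single application of BiHom-associativity (\ref{BHassoc}) to each side gives
\begin{eqnarray*}
\alpha\beta(x)\cdot(\alpha(y)\cdot z)=\alpha(\beta(x))\cdot(\alpha(y)\cdot z)=(\beta(x)\cdot\alpha(y))\cdot\beta(z),
\end{eqnarray*}
so the left-hand side of (\ref{4.1}) vanishes, and by the same computation (with $x$ and $y$ swapped) so does the right-hand side. Thus (\ref{4.1}) is trivially satisfied.

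There is really no main obstacle here: every compatibility condition defining a BiHom-Novikov-Poisson algebra reduces, once $\ast=\cdot$, to either BiHom-associativity, the BiHom-Novikov axiom, or the very hypothesis of BiHom-commutativity. The only thing one might double-check is that BiHom-commutativity is not needed as an extra input for (\ref{4.1}) and (\ref{new})—indeed it is not; only BiHom-associativity is used there, while BiHom-commutativity feeds into axiom (2) via Remark \ref{BHCBHN} and hence into (\ref{4.2}).
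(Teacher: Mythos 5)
Your proposal is correct and follows essentially the same route as the paper: axiom (1) is the hypothesis, axiom (2) is Remark \ref{BHCBHN}, (\ref{new}) is BiHom-associativity, and (\ref{4.2}) is (\ref{Binovikov}). The only (cosmetic) difference is in (\ref{4.1}): the paper observes that it literally coincides with (\ref{BiNoviko}), which already holds since the algebra is BiHom-Novikov, whereas you show both sides vanish separately by BiHom-associativity --- both justifications are valid one-liners.
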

\begin{proof}
By Remark \ref{BHCBHN} we know that $(A, \cdot, \alpha, \beta)$ is a BiHom-Novikov algebra. 
The condition (\ref{4.1}) coincides with the condition (\ref{BiNoviko}), the condition (\ref{4.2}) coincides with the condition 
(\ref{Binovikov}), while (\ref{new}) is just the BiHom-associativity of $\cdot $. 
\end{proof}

The Hom-version of our next result may be found in \cite{yuanyou}. 
\begin{proposition} \label{BHGelD}
Under the hypotheses of Corollary  \ref{commhom} , $(A, \mu , *, \alpha , \beta )$ is a BiHom-Novikov-Poisson algebra. 
\end{proposition}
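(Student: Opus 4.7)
The plan is to verify that the three compatibility axioms (\ref{4.1}), (\ref{4.2}) and (\ref{new}) of Definition \ref{binovipoisson algebra} all follow from the hypotheses of Corollary \ref{commhom}; conditions (1) and (2) of that definition are provided for free, since $(A,\cdot,\alpha,\beta)$ is BiHom-commutative by assumption and $(A,\ast,\alpha,\beta)$ is BiHom-Novikov by Corollary \ref{commhom}. In each of the three axioms the strategy is the same: unfold $a\ast b=a\cdot D(b)$ and then use the BiHom-associativity and BiHom-commutativity of $\cdot$ together with the derivation identity for $D$ and the commutations $D\alpha=\alpha D$, $D\beta=\beta D$.

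Axiom (\ref{new}) is immediate: $\alpha(x)\cdot(y\ast z)=\alpha(x)\cdot(y\cdot D(z))=(x\cdot y)\cdot\beta(D(z))$ by BiHom-associativity (\ref{BHassoc}), and $\beta D=D\beta$ turns the right-hand side into $(x\cdot y)\ast\beta(z)$. For (\ref{4.2}), expanding $\ast$ on both sides and moving $D$ past $\alpha\beta$ reduces the desired identity to $(x\cdot\beta(y))\cdot\alpha\beta(D(z))=(x\cdot\beta(D(z)))\cdot\alpha\beta(y)$, which is exactly (\ref{Binovikov}) for $\cdot$ evaluated on the triple $(x,y,D(z))$. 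This identity holds because every BiHom-commutative algebra is BiHom-Novikov (Remark \ref{BHCBHN}).

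The substantive calculation is (\ref{4.1}). Applying the Leibniz rule gives
$\alpha\beta(x)\ast(\alpha(y)\cdot z)=\alpha\beta(x)\cdot(\alpha(y)\cdot D(z))+\alpha\beta(x)\cdot(\alpha D(y)\cdot z)$;
BiHom-associativity rewrites the second summand as $(\beta(x)\cdot\alpha D(y))\cdot\beta(z)=(\beta(x)\ast\alpha(y))\cdot\beta(z)$, which cancels the first term on the left-hand side of (\ref{4.1}) and leaves $-\alpha\beta(x)\cdot(\alpha(y)\cdot D(z))$. A further application of BiHom-associativity turns this into $-(\beta(x)\cdot\alpha(y))\cdot\beta D(z)$, and BiHom-commutativity (\ref{commnovi}) lets us swap $x$ and $y$ inside the first factor. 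Performing exactly the same reduction on the right-hand side of (\ref{4.1}) produces the same expression, so the two sides coincide.

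There is no serious obstacle; the derivation rule produces precisely the extra term needed to cancel the $D$-on-$y$ contribution in (\ref{4.1}), so only the $D$-on-$z$ part survives, at which point BiHom-commutativity renders the expression manifestly symmetric in $x$ and $y$. The bookkeeping is the only thing that demands attention, and it is no worse than in the proof of Proposition \ref{generalGD}, which is essentially the version with $\lambda=\xi=\mathrm{id}_A$ of the Novikov portion of the statement, supplemented by the extra axioms (\ref{4.2}) and (\ref{new}).
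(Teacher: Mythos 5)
Your proposal is correct and follows essentially the same route as the paper: expand $\ast$ via $D$, use the Leibniz rule so that the $D$-on-$y$ term cancels against $(\beta(x)\ast\alpha(y))\cdot\beta(z)$, and finish (\ref{4.1}) with BiHom-associativity and BiHom-commutativity, while (\ref{4.2}) and (\ref{new}) reduce to short BiHom-associativity computations. The only (harmless) cosmetic difference is that for (\ref{4.2}) you invoke Remark \ref{BHCBHN} to quote (\ref{Binovikov}) directly, where the paper re-derives the same identity by hand.
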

\begin{proof}
We only need to prove the relations (\ref{4.1}), (\ref{4.2}) and (\ref{new}). To prove (\ref{4.1}) we compute: \\[2mm]
${\;\;\;\;\;\;\;}$$(\beta (x)*\alpha (y))\cdot \beta (z)-\alpha \beta (x)*(\alpha (y)\cdot z)$
\begin{eqnarray*}
&=&(\beta (x)\cdot \alpha D(y))\cdot \beta (z)-\alpha \beta (x)\cdot D(\alpha (y)\cdot z)\\
&=&(\beta (x)\cdot \alpha D(y))\cdot \beta (z)-\alpha \beta (x)\cdot (\alpha (y)\cdot D(z)+\alpha D(y)\cdot z)\\
&=&(\beta (x)\cdot \alpha D(y))\cdot \beta (z)-\alpha \beta (x)\cdot (\alpha (y)\cdot D(z))
-\alpha \beta (x)\cdot (\alpha D(y)\cdot z)\\
&\overset{(\ref{BHassoc})}{=}& (\beta (x)\cdot \alpha D(y))\cdot \beta (z)-\alpha \beta (x)\cdot (\alpha (y)\cdot D(z))
-(\beta (x)\cdot \alpha D(y))\cdot \beta (z)\\
&=&-\alpha \beta (x)\cdot (\alpha (y)\cdot D(z))\\
&\overset{(\ref{BHassoc})}{=}&-(\beta (x)\cdot \alpha (y))\cdot \beta D(z),
\end{eqnarray*}
and this expression is obviously symmetric in $x$ and $y$ because of the BiHom-commutativity of $(A, \cdot , \alpha , \beta )$. 
To prove (\ref{4.2}) we compute: 
\begin{eqnarray*}
(x\cdot \beta (y))*\alpha \beta (z)&=&(x\cdot \beta (y))\cdot D\alpha \beta (z)\\
&\overset{(\ref{BHassoc})}{=}&\alpha (x)\cdot (\beta (y)\cdot \alpha (D(z)))\\
&\overset{(\ref{commnovi})}{=}&\alpha (x)\cdot (\beta D(z)\cdot \alpha (y))\\
&\overset{(\ref{BHassoc})}{=}&(x\cdot \beta D(z))\cdot \alpha \beta (y)
=(x\ast \beta (z))\cdot \alpha \beta (y). 
\end{eqnarray*}
Finally, to prove (\ref{new}) we compute: 
\begin{eqnarray*}
(x\cdot y)\ast \beta (z)&=&(x\cdot y)\cdot D\beta (z)\\
&\overset{(\ref{BHassoc})}{=}&\alpha (x)\cdot (y\cdot D(z))=\alpha (x)\cdot (y\ast z), 
\end{eqnarray*}
finishing the proof. 
\end{proof}

\begin{proposition} \label{YautwistNP}
Let $(A, \cdot , * )$ be a Novikov-Poisson algebra and let $\alpha , \beta :
A\rightarrow A$ be two commuting morphisms of Novikov-Poisson algebras. Then 
$$A_{(\alpha ,\beta )}:=(A, \tilde{\cdot}:=\cdot \circ (\alpha \otimes \beta), \tilde{*}:=* \circ (\alpha \otimes \beta),
\alpha , \beta )$$ 
is a BiHom-Novikov-Poisson algebra, called the Yau twist of $(A, \cdot , *)$. 
\end{proposition}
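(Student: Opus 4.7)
The plan is to verify the three bundles of axioms in Definition \ref{binovipoisson algebra} one by one, reducing each to an identity already available from the Novikov--Poisson structure of $(A,\cdot,*)$. Throughout, I use that $\alpha $ and $\beta $ commute and that each of them is simultaneously an algebra map for $\cdot $ and for $*$; hence $\alpha $ and $\beta $ are also multiplicative for $\tilde{\cdot }$ and $\tilde{* }$, and they commute, so the ``structural'' part of the axioms is automatic.

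First, since $(A,\cdot )$ is commutative associative and $\alpha ,\beta $ are commuting algebra maps, the Remark following Definition \ref{commbinovi} gives immediately that $(A,\tilde{\cdot },\alpha ,\beta )$ is a BiHom-commutative algebra. Second, since $(A,*)$ is a Novikov algebra and $\alpha ,\beta $ are commuting Novikov-algebra morphisms, the Yau-twist proposition stated right after Definition \ref{binovialgebra} gives that $(A,\tilde{* },\alpha ,\beta )$ is a BiHom-Novikov algebra.

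It remains to check the three compatibility conditions (\ref{4.1}), (\ref{4.2}), (\ref{new}). The recipe in each case is the same: expand both sides using $a\tilde{\cdot }b=\alpha (a)\cdot \beta (b)$ and $a\tilde{* }b=\alpha (a)*\beta (b)$, push $\alpha $'s and $\beta $'s inside through multiplicativity (so that $\alpha ,\beta $ apply only to the original variables $x,y,z$), and then recognize the resulting equation as the Novikov--Poisson identity (\ref{NP1}), (\ref{NP3}), or (\ref{NP2}) applied to the twisted arguments. For instance, for (\ref{4.1}) both sides reduce, after straightforward bookkeeping, to
\begin{eqnarray*}
(\alpha ^{2}\beta (x)*\alpha ^{2}\beta (y))\cdot \beta ^{2}(z)-\alpha ^{2}\beta (x)*(\alpha ^{2}\beta (y)\cdot \beta ^{2}(z))
\end{eqnarray*}
versus the same expression with $x$ and $y$ swapped, so (\ref{NP1}) applied to $\alpha ^{2}\beta (x),\alpha ^{2}\beta (y),\beta ^{2}(z)$ does the job. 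For (\ref{new}), both sides become $\alpha ^{2}(x)\cdot (\alpha \beta (y)*\beta ^{2}(z))$ versus $(\alpha ^{2}(x)\cdot \alpha \beta (y))*\beta ^{2}(z)$, which is exactly (\ref{NP3}) on twisted arguments; and for (\ref{4.2}) the same procedure reduces it to (\ref{NP2}).

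The only potential obstacle is purely combinatorial: keeping careful track of how many $\alpha $'s and $\beta $'s accumulate on each of $x,y,z$ in the expansions on the two sides, to be certain that the resulting untwisted identity really is one of (\ref{NP1})--(\ref{NP3}). The fact that $\tilde{\cdot }$ and $\tilde{* }$ are twisted by the \emph{same} pair $(\alpha ,\beta )$, together with the commutation of $\alpha $ with $\beta $ and their compatibility with both operations, is precisely what guarantees that the exponents match up on both sides of each compatibility, so no genuine difficulty arises.
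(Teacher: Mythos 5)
Your proposal is correct and follows essentially the same route as the paper: reduce the BiHom-commutative and BiHom-Novikov parts to the earlier Yau-twist results, then expand the three compatibility conditions and match them with (\ref{NP1}), (\ref{NP2}), (\ref{NP3}) on twisted arguments. The intermediate expressions you display for (\ref{4.1}) and (\ref{new}) coincide exactly with those in the paper's computation.
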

\begin{proof}
We only have to check (\ref{4.1}), (\ref{4.2}) and (\ref{new}) 
for $A_{(\alpha ,\beta )}$. For (\ref{4.1}) we compute:\\[2mm]
${\;\;\;\;\;\;}$
$(\beta(x)\tilde{\ast } \alpha(y))\tilde{\cdot }\beta(z)-\alpha\beta(x)\tilde{\ast }(\alpha(y)\tilde{\cdot }z)$
\begin{eqnarray*}
&=&(\alpha \beta(x)\ast  \alpha \beta (y))\tilde{\cdot }\beta(z)-\alpha\beta(x)\tilde{\ast }(\alpha ^2(y)\cdot \beta (z))\\
&=&(\alpha ^2\beta(x)\ast  \alpha ^2\beta (y))\cdot \beta ^2(z)-\alpha ^2\beta(x)\ast (\alpha ^2\beta (y)\cdot \beta ^2(z)), 
\end{eqnarray*}
and this expression is symmetric in $x$ and $y$ by (\ref{NP1}). Next, (\ref{4.2}) reads 
$$(x \tilde{\cdot }\beta (y))\tilde{\ast }\alpha\beta(z)=(x\tilde{\ast }\beta (z))\tilde{\cdot }\alpha\beta(y),$$
which is equivalent to 
$(\alpha (x) \cdot \beta ^2(y))\tilde{\ast }\alpha\beta(z)=(\alpha (x)\ast \beta ^2(z))\tilde{\cdot }\alpha\beta(y),$
which in turn is equivalent to 
$(\alpha ^2(x) \cdot \alpha \beta ^2(y))\ast \alpha\beta ^2(z)
=(\alpha ^2(x)\ast \alpha \beta ^2(z))\cdot \alpha\beta ^2(y),$
which is true by (\ref{NP2}). 

Finally, (\ref{new}) reads: 
$$\alpha (x)\tilde{\cdot} (y\tilde{\ast}z)=(x\tilde{\cdot}y)\tilde{\ast}\beta (z),$$
which is equivalent to $\alpha (x)\tilde{\cdot} (\alpha (y)\ast \beta (z))=(\alpha (x)\cdot \beta (y))\tilde{\ast}\beta (z),$
which in turn is equivalent to 
$\alpha ^2(x)\cdot (\alpha \beta (y)\ast \beta ^2(z))=(\alpha ^2(x)\cdot \alpha \beta (y))\ast \beta ^2(z),$
 which is true by (\ref{NP3}). 
\end{proof}

More generally, one can prove the following result:
\begin{proposition}\label{genYautwistNP}
Let $(A, \cdot, \ast, \alpha, \beta)$ be a BiHom-Novikov-Poisson algebra and let $\tilde{\alpha}, \tilde{\beta}: A\rightarrow A$ be  two  morphisms of BiHom-Novikov-Poisson algebras 
such that any two of the maps $\alpha, \beta, \tilde{\alpha}, \tilde{\beta}$ commute. Then
 $$A_{(\tilde{\alpha}, \tilde{\beta})}:=(A, \tilde{\cdot}:=\cdot \circ (\tilde{\alpha}\otimes \tilde{\beta}),\, \tilde{\ast}:=\ast \circ (\tilde{\alpha}\otimes \tilde{\beta}),\, \alpha\circ \tilde{\alpha}, \, \beta\circ \tilde{\beta})
 $$
 is also a BiHom-Novikov-Poisson algebra.
\end{proposition}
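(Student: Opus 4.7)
The plan is to verify the three parts of Definition \ref{binovipoisson algebra} for $A_{(\tilde{\alpha}, \tilde{\beta})}$. Parts (1) and (2) — that $(A, \tilde{\cdot}, \alpha\tilde{\alpha}, \beta\tilde{\beta})$ is BiHom-commutative and $(A, \tilde{\ast}, \alpha\tilde{\alpha}, \beta\tilde{\beta})$ is BiHom-Novikov — are immediate from the generalized Yau-twist statements already proved in the paper: the Remark after Definition \ref{commbinovi} for BiHom-commutative algebras, and the second Proposition after Definition \ref{binovialgebra} for BiHom-Novikov algebras. Indeed, since $\tilde\alpha$ and $\tilde\beta$ are morphisms of BiHom-Novikov-Poisson algebras, they are in particular morphisms for both $(A,\cdot,\alpha,\beta)$ and $(A,\ast,\alpha,\beta)$, and all relevant commutation hypotheses hold by assumption.

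The substantive work is to verify the three compatibility conditions (\ref{4.1}), (\ref{4.2}) and (\ref{new}) for the twisted operations. My strategy for each is identical: expand $x\tilde{\cdot}y=\tilde\alpha(x)\cdot\tilde\beta(y)$ and $x\tilde{\ast}y=\tilde\alpha(x)\ast\tilde\beta(y)$, then use that $\tilde\alpha,\tilde\beta$ are multiplicative with respect to both $\cdot$ and $\ast$, and that all four maps $\alpha,\beta,\tilde\alpha,\tilde\beta$ commute pairwise, to collect all occurrences of $\tilde\alpha$ and $\tilde\beta$ at the outermost position of each term. What remains inside is then the corresponding identity of the original BiHom-Novikov-Poisson structure evaluated at suitable elements, and the desired equality follows by applying the relation (\ref{4.1}), (\ref{4.2}) or (\ref{new}) to $(A,\cdot,\ast,\alpha,\beta)$.

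Concretely, for (\ref{new}) I would compute
\begin{eqnarray*}
\alpha\tilde\alpha(x)\tilde\cdot(y\tilde\ast z)
&=&\tilde\alpha\alpha\tilde\alpha(x)\cdot\tilde\beta(\tilde\alpha(y)\ast\tilde\beta(z))\\
&=&\alpha\tilde\alpha^{2}(x)\cdot(\tilde\beta\tilde\alpha(y)\ast\tilde\beta^{2}(z))
\overset{(\ref{new})}{=}(\tilde\alpha^{2}(x)\cdot\tilde\beta\tilde\alpha(y))\ast\beta\tilde\beta^{2}(z)\\
&=&\tilde\alpha(\tilde\alpha(x)\cdot\tilde\beta(y))\ast\tilde\beta(\beta\tilde\beta(z))
=(x\tilde\cdot y)\tilde\ast\beta\tilde\beta(z),
\end{eqnarray*}
and analogous manipulations will handle (\ref{4.2}) using (\ref{4.2}) of $A$, and (\ref{4.1}) using (\ref{4.1}) of $A$. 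In each case the symmetry in $x,y$ required by (\ref{4.1}) is preserved under the twist because $\tilde\alpha$ and $\tilde\beta$ appear symmetrically.

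I do not expect any real obstacle: the argument is a routine but careful bookkeeping of powers of commuting, multiplicative endomorphisms, entirely parallel in spirit to the proof of Proposition \ref{YautwistNP} and to the Yau-twist verifications already carried out in the earlier sections. The only care needed is to track the exact powers of $\tilde\alpha$ and $\tilde\beta$ that appear when one expands a doubly nested twisted product, so that the identity to which one reduces really is the original (\ref{4.1}), (\ref{4.2}) or (\ref{new}) applied to appropriate arguments.
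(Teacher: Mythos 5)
Your proposal is correct: the paper actually states Proposition \ref{genYautwistNP} without proof (``more generally, one can prove\dots''), and your argument is precisely the intended one, namely the direct generalization of the paper's proof of Proposition \ref{YautwistNP} --- expand the twisted products, push all occurrences of $\tilde\alpha,\tilde\beta$ outward using multiplicativity and pairwise commutation, and reduce to (\ref{4.1}), (\ref{4.2}), (\ref{new}) for the original structure applied to $\tilde\alpha^{2}\tilde\beta(x)$, etc. Your sample computation for (\ref{new}) checks out, and the same bookkeeping does close the other two conditions.
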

\begin{corollary}\label{Coro 4.2}
Let $(A, \cdot, \ast, \alpha , \beta)$ be a BiHom-Novikov-Poisson algebra. Then
 $$
 A^{n}:=(A, \cdot \circ (\alpha^{n}\otimes \beta^{n}), \ast \circ (\alpha^{n}\otimes \beta^{n}), \alpha^{n+1}, \beta^{n+1})
 $$ 
 is also a BiHom-Novikov-Poisson algebra for any $n\geq 0$.
\end{corollary}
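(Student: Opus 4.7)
The plan is to deduce this corollary directly from Proposition \ref{genYautwistNP} by specializing $\tilde{\alpha}$ and $\tilde{\beta}$ to the iterated structure maps. Specifically, I would set $\tilde{\alpha}=\alpha^n$ and $\tilde{\beta}=\beta^n$, and then observe that the data produced by Proposition \ref{genYautwistNP} with this choice is exactly $A^n$, since $\alpha\circ\alpha^n=\alpha^{n+1}$, $\beta\circ\beta^n=\beta^{n+1}$, and the two twisted multiplications $\cdot\circ(\alpha^n\otimes\beta^n)$ and $\ast\circ(\alpha^n\otimes\beta^n)$ coincide by definition.

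To invoke Proposition \ref{genYautwistNP}, I need to verify two hypotheses. First, that $\alpha^n$ and $\beta^n$ are morphisms of BiHom-Novikov-Poisson algebras from $(A,\cdot,\ast,\alpha,\beta)$ to itself. This reduces to checking that $\alpha$ and $\beta$ are themselves such morphisms, because a composition of BiHom-Novikov-Poisson morphisms is again a BiHom-Novikov-Poisson morphism, and $\alpha^0=\beta^0=\Id_A$ is trivially a morphism. The fact that $\alpha$ and $\beta$ are morphisms in this sense is immediate from the very definition of a BiHom-Novikov-Poisson algebra: $\alpha\circ\beta=\beta\circ\alpha$, $\alpha$ and $\beta$ are multiplicative with respect to $\cdot$ (since $(A,\cdot,\alpha,\beta)$ is BiHom-commutative, hence BiHom-associative), and they are multiplicative with respect to $\ast$ (since $(A,\ast,\alpha,\beta)$ is BiHom-Novikov, so \eqref{BiNovik} holds). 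Second, that any two of $\alpha,\beta,\alpha^n,\beta^n$ commute, which is obvious since all four are built from the single commuting pair $\alpha,\beta$.

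With both hypotheses in place, Proposition \ref{genYautwistNP} yields that $A^n=A_{(\alpha^n,\beta^n)}$ is a BiHom-Novikov-Poisson algebra, which is the desired conclusion. There is essentially no obstacle here: the whole content of the corollary is the observation that powers of the structure maps are themselves morphisms, and the corollary is a direct specialization of the more general iterated-twist proposition. The only mild point worth flagging explicitly is the verification that $\alpha$ and $\beta$ qualify as morphisms of the ambient BiHom-Novikov-Poisson algebra, but this is forced by the axioms and requires no computation.
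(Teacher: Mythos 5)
Your proposal is correct and is exactly the paper's proof: the authors also obtain the corollary by applying Proposition \ref{genYautwistNP} with $\tilde{\alpha}=\alpha^{n}$ and $\tilde{\beta}=\beta^{n}$. The extra detail you supply (that powers of the structure maps are morphisms of BiHom-Novikov-Poisson algebras and that all four maps commute) is a correct elaboration of what the paper leaves implicit.
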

\begin{proof}
Apply Proposition \ref{genYautwistNP} for $\tilde{\alpha }:=\alpha^{n}$ and
$\tilde{\beta }:=\beta^{n}$. 
\end{proof}

The following result is the special case of Corollary \ref{Coro 4.2} with $\ast=0$.
\begin{corollary}\label{Coro 4.3}
Let $(A, \cdot, \alpha , \beta)$ be a BiHom-commutative algebra. Then
 $$
 A^{n}:=(A, \cdot \circ (\alpha^{n}\otimes \beta^{n}), \alpha^{n+1}, \beta^{n+1})
 $$
 is also a BiHom-commutative algebra for any $n\geq 0$.
\end{corollary}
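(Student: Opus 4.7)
The plan is to derive this statement as the special case $\ast = 0$ of Corollary \ref{Coro 4.2}, as suggested by the sentence preceding the corollary.

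First, I would verify that $(A, \cdot, 0, \alpha, \beta)$ is itself a BiHom-Novikov-Poisson algebra. By hypothesis $(A, \cdot, \alpha, \beta)$ is BiHom-commutative, which takes care of condition (1) of Definition \ref{binovipoisson algebra}. For condition (2), every term in the BiHom-Novikov identities (\ref{BiNovik})--(\ref{Binovikov}) contains at least one occurrence of the multiplication in question, so taking that multiplication to be zero makes each identity reduce to $0=0$. The three compatibility conditions (\ref{4.1}), (\ref{4.2}), (\ref{new}) likewise collapse to $0=0$, since each of them involves at least one factor of $\ast$.

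Then I would apply Corollary \ref{Coro 4.2} to the BiHom-Novikov-Poisson algebra $(A, \cdot, 0, \alpha, \beta)$ just constructed, obtaining, for every $n \geq 0$, the BiHom-Novikov-Poisson algebra
\begin{equation*}
A^{n} = \bigl(A,\; \cdot \circ (\alpha^{n}\otimes \beta^{n}),\; 0 \circ (\alpha^{n}\otimes \beta^{n}),\; \alpha^{n+1},\; \beta^{n+1}\bigr).
\end{equation*}
Since $0 \circ (\alpha^{n}\otimes \beta^{n}) = 0$, extracting the BiHom-commutative part of this structure (i.e.\ item (1) of Definition \ref{binovipoisson algebra}) yields that $\bigl(A,\; \cdot \circ (\alpha^{n}\otimes \beta^{n}),\; \alpha^{n+1},\; \beta^{n+1}\bigr)$ is BiHom-commutative, which is the desired conclusion.

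There is essentially no obstacle in this argument; the only point to confirm is that $\ast = 0$ is a permissible choice, and this is manifest from the fact that the BiHom-Novikov axioms together with the Poisson-type compatibilities are all homogeneous of positive degree in $\ast$. Alternatively, one could bypass Corollary \ref{Coro 4.2} entirely and apply Proposition \ref{genYautwistNP} to the BiHom-Novikov-Poisson algebra $(A, \cdot, \cdot, \alpha, \beta)$ provided by Proposition \ref{Proposition 1}, with $\tilde{\alpha} = \alpha^n$ and $\tilde{\beta} = \beta^n$, and then read off the BiHom-commutative component; but the route through $\ast = 0$ is the most economical.
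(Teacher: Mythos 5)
Your proposal is correct and follows exactly the paper's route: the paper simply states that Corollary \ref{Coro 4.3} is the special case of Corollary \ref{Coro 4.2} with $\ast=0$, and your verification that $(A,\cdot,0,\alpha,\beta)$ is a BiHom-Novikov-Poisson algebra fills in the (easy) details left implicit there.
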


The following result is the special case of Corollary \ref{Coro 4.2} with $\cdot=0$.
\begin{corollary}\label{Coro 4.4}
Let $(A, \ast, \alpha , \beta)$ be a BiHom-Novikov algebra. Then
 $$
 A^{n}:=(A, \ast \circ (\alpha^{n}\otimes \beta^{n}), \alpha^{n+1}, \beta^{n+1})
 $$
 is also a BiHom-Novikov algebra for any $n\geq 0$.
\end{corollary}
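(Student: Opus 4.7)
The plan is to deduce this from Corollary \ref{Coro 4.2} by treating $(A,\ast,\alpha,\beta)$ as a degenerate BiHom-Novikov-Poisson algebra with the zero commutative product; this is the strategy already indicated in the sentence preceding the statement.

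First I would verify that $(A,0,\ast,\alpha,\beta)$ genuinely satisfies Definition \ref{binovipoisson algebra}. The BiHom-Novikov axioms for $\ast$ hold by hypothesis. The multiplicativity of $\alpha$ and $\beta$ with respect to $\cdot:=0$, the BiHom-associativity (\ref{BHassoc}) of the zero product, and the BiHom-commutativity condition (\ref{commnovi}) for the zero product all reduce to the trivial identity $0=0$. As for the three compatibility conditions (\ref{4.1}), (\ref{4.2}) and (\ref{new}), each of them contains a factor of $\cdot$ in every single summand on both sides, so each collapses to $0=0$ as well.

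Next I would invoke Corollary \ref{Coro 4.2} with this BiHom-Novikov-Poisson structure, obtaining that
\[
\bigl(A,\; 0\circ(\alpha^{n}\otimes\beta^{n}),\; \ast\circ(\alpha^{n}\otimes\beta^{n}),\; \alpha^{n+1},\; \beta^{n+1}\bigr)
\]
is again a BiHom-Novikov-Poisson algebra. Since $0\circ(\alpha^{n}\otimes\beta^{n})=0$, stripping away the trivial commutative component leaves precisely $A^{n}=(A,\,\ast\circ(\alpha^{n}\otimes\beta^{n}),\,\alpha^{n+1},\,\beta^{n+1})$, which is therefore a BiHom-Novikov algebra in the sense of Definition \ref{binovialgebra}.

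There is no serious obstacle here; the only minor point deserving attention is the bookkeeping in the first step, confirming that the identically zero bilinear map is admissible as the commutative associative part of a (degenerate) BiHom-Novikov-Poisson algebra. Once this is observed, the statement is an immediate specialization of Corollary \ref{Coro 4.2}. (One could equally well argue directly by applying the generalized Yau-twist statement for BiHom-Novikov algebras with $\tilde{\alpha}=\alpha^{n}$ and $\tilde{\beta}=\beta^{n}$, noting that powers of $\alpha$ and $\beta$ are BiHom-Novikov morphisms that commute pairwise, but the route through Corollary \ref{Coro 4.2} is the shortest.)
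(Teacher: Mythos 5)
Your proposal is correct and follows exactly the paper's own route: the paper proves this corollary by the single remark that it is the special case of Corollary \ref{Coro 4.2} with $\cdot=0$, and your verification that the zero product is an admissible (degenerate) BiHom-commutative component satisfying (\ref{4.1}), (\ref{4.2}) and (\ref{new}) just makes that remark explicit.
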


The following consequence of Proposition  \ref{BHGelD} is useful for constructing examples of BiHom-Novikov-Poisson algebras.
\begin{corollary}\label{Coro 4.6}
Let $(A, \mu)$ be a commutative and associative algebra, let $\alpha, \beta: A\rightarrow A$ be two commuting algebra morphisms, 
and let $D: A\rightarrow A$ be a derivation such that $D\circ \alpha=\alpha\circ D$ and
$D\circ \beta=\beta\circ D$. Then $(A, \bullet, \ast, \alpha , \beta)$ is a BiHom-Novikov-Poisson algebra, where
\begin{eqnarray*}
&x\bullet y=\mu(\alpha(x)\otimes \beta(y)),\;\;\;\;\;\;\;x\ast y=\mu(\alpha(x)\otimes D(\beta(y))),\;\;\;\forall \;\;x, y\in A.
\end{eqnarray*}
\end{corollary}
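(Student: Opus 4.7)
The plan is a short reduction to Proposition \ref{BHGelD}. First I would form the Yau twist $(A,\bullet,\alpha,\beta)$, where $x\bullet y=\alpha(x)\cdot \beta(y)$, and observe that it is BiHom-commutative: the commutativity of $\mu$, combined with the fact that $\alpha$ and $\beta$ are commuting algebra morphisms, yields
$\beta(x)\bullet\alpha(y)=\alpha\beta(x)\cdot\alpha\beta(y)=\alpha\beta(y)\cdot\alpha\beta(x)=\beta(y)\bullet\alpha(x)$,
which is exactly the BiHom-commutativity condition (\ref{commnovi}). Multiplicativity of $\alpha$ and $\beta$ with respect to $\bullet$ is immediate, and $\alpha\circ\beta=\beta\circ\alpha$ is given.

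Next I would check that $D$ is still a derivation with respect to the twisted product $\bullet$. Expanding $D(x\bullet y)=D(\alpha(x)\cdot\beta(y))$ and using that $D$ is a $\mu$-derivation commuting with both $\alpha$ and $\beta$, one obtains $D(x\bullet y)=D(x)\bullet y+x\bullet D(y)$. By hypothesis $D$ also commutes with the structure maps $\alpha$ and $\beta$ of the twist.

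With these two observations in hand I would apply Proposition \ref{BHGelD} to the BiHom-commutative algebra $(A,\bullet,\alpha,\beta)$ equipped with the derivation $D$, producing a BiHom-Novikov-Poisson algebra $(A,\bullet,\ast',\alpha,\beta)$ whose Novikov product is $x\ast' y:=x\bullet D(y)$. A single rewrite, $x\bullet D(y)=\alpha(x)\cdot\beta(D(y))=\alpha(x)\cdot D(\beta(y))=x\ast y$, using once more that $D$ commutes with $\beta$, identifies $\ast'$ with the product $\ast$ of the statement and finishes the proof.

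The argument is essentially a routine packaging of previously established tools, so I do not anticipate any genuine obstacle; the only mildly computational step is verifying the Leibniz rule for $D$ on $\bullet$, and this is immediate from the commutation of $D$ with $\alpha$ and $\beta$.
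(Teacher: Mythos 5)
Your proposal is correct and follows essentially the same route as the paper, which presents this corollary precisely as a consequence of Proposition \ref{BHGelD} applied to the Yau twist $(A,\bullet,\alpha,\beta)$ (already noted in the paper to be BiHom-commutative) together with the observation that $D$ remains a derivation for $\bullet$ and that $x\bullet D(y)=x\ast y$. No gaps.
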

\section{Infinitesimal BiHom-bialgebras}\label{sec4}
\setcounter{equation}{0} 

We introduce now the main concept of this paper. 
\begin{definition}
An infinitesimal BiHom-bialgebra is a 7-tuple $(A, \mu, \Delta , \alpha ,
\beta , \psi , \omega )$, with the properties that $(A, \mu , \alpha , \beta )$
is a BiHom-associative algebra, $(A, \Delta , \psi , \omega )$ is a
BiHom-coassociative coalgebra and the following conditions are satisfied,
for all $a, b\in A$:
\begin{eqnarray}
&& \Delta \circ \mu = (\mu \otimes \beta) \circ (\omega \otimes \Delta)+
(\alpha \otimes \mu)\circ (\Delta \otimes \psi),  \label{infin} \\
&& \alpha \circ \psi=\psi\circ \alpha,~~ \alpha \circ \omega=\omega \circ
\alpha, ~~ \beta \circ \psi=\psi \circ \beta, ~~ \beta \circ \omega=\omega
\circ \beta,  \label{infinb} \\
&& (\alpha \otimes \alpha)\circ \Delta=\Delta \circ \alpha, ~~(\beta \otimes
\beta)\circ \Delta=\Delta \circ \beta,  \label{infinbi} \\
&& \psi(a\cdot b)=\psi(a)\cdot \psi(b), ~~ \omega(a\cdot b)=\omega(a)\cdot \omega(b).
\label{infinbia}
\end{eqnarray}
\end{definition}

In terms of elements, 
the condition (\ref{infin}) can be rewritten as
\begin{equation}
\Delta (a\cdot b)=\omega (a)\cdot b_{1}\otimes \beta (b_{2})+\alpha (a_{1})\otimes
a_{2}\cdot \psi (b). \label{infinelements}
\end{equation}

The above axioms are justified by the following result: 
\begin{lemma}
\label{genGD} If $(A, \mu, \Delta , \alpha , \beta , \psi , \omega )$ is an
infinitesimal BiHom-bialgebra, 
then $\Delta: A\rightarrow A\otimes A$ is a
derivation (in the usual sense) of $A$ with values in the $A$-bimodule $%
(A\otimes A, \alpha \otimes \alpha, \beta \otimes \beta)$, in which the left
and the right $A$-actions are (for all $a,b, c\in A$)
\begin{eqnarray}
&& a \cdot (b\otimes c)=\omega(a)\cdot b\otimes \beta(c) ~~ and ~~ (b\otimes
c)\cdot a=\alpha(b) \otimes c\cdot \psi(a).  \label{infinMod}
\end{eqnarray}
\end{lemma}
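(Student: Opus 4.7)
The plan is to establish the two assertions packaged in the lemma separately: first, that $(A\otimes A, \alpha\otimes \alpha, \beta\otimes \beta)$ equipped with the stated left and right $A$-actions really is an $A$-bimodule in the BiHom sense, and second, that $\Delta$ is a derivation into this bimodule. The derivation part will be essentially tautological given the bimodule structure, since the derivation identity $\Delta(a\cdot b)=a\cdot \Delta(b)+\Delta(a)\cdot b$ unpacks, via the definitions (\ref{infinMod}) of the two actions, to exactly the Sweedler-form compatibility (\ref{infinelements}), which is just (\ref{infin}) rewritten on elements. So almost all the work is in checking the bimodule axioms.

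For the bimodule verification, I would first check multiplicativity of the actions with respect to the structure maps $\alpha\otimes\alpha$ and $\beta\otimes\beta$: for instance, $(\alpha\otimes\alpha)(a\cdot(b\otimes c))=(\alpha\otimes\alpha)(\omega(a)\cdot b\otimes \beta(c))=\alpha(\omega(a))\cdot\alpha(b)\otimes \alpha\beta(c)$, which equals $\omega(\alpha(a))\cdot\alpha(b)\otimes\beta(\alpha(c))=\alpha(a)\cdot(\alpha\otimes\alpha)(b\otimes c)$ by the commutation relations in (\ref{infinb}) together with the fact that $\alpha$ is an algebra map. The analogous computations for $\beta\otimes\beta$ and for the right action are routine and use only (\ref{infinb}) and (\ref{infinbia}).

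Next I would verify the BiHom-associativity conditions (\ref{lmod4}), (\ref{rmod4}), (\ref{BHbim}) for the two actions. For the left module axiom, compute
\[
\alpha(a)\cdot(a'\cdot(b\otimes c))=\omega(\alpha(a))\cdot(\omega(a')\cdot b)\otimes \beta^{2}(c)\overset{(\ref{BHassoc})}{=}(\omega(a)\cdot \omega(a'))\cdot\beta(b)\otimes\beta^{2}(c),
\]
which equals $\omega(a\cdot a')\cdot\beta(b)\otimes\beta^{2}(c)=(a\cdot a')\cdot(\beta\otimes\beta)(b\otimes c)$ by multiplicativity of $\omega$ from (\ref{infinbia}). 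The right module axiom is symmetric, now using multiplicativity of $\psi$. For the bimodule compatibility, one computes both $\alpha(a)\cdot((b\otimes c)\cdot a')$ and $(a\cdot(b\otimes c))\cdot\beta(a')$ and finds both equal to $\alpha(\omega(a))\cdot \alpha(b)\otimes \beta(c)\cdot\psi(\beta(a'))$, where one needs the commutations $\alpha\omega=\omega\alpha$ and $\beta\psi=\psi\beta$ from (\ref{infinb}).

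Finally, having the bimodule in hand, the derivation identity $\Delta(ab)=a\cdot\Delta(b)+\Delta(a)\cdot b$ translates, via (\ref{infinMod}) and Sweedler notation $\Delta(b)=b_{1}\otimes b_{2}$, into $\omega(a)\cdot b_{1}\otimes\beta(b_{2})+\alpha(a_{1})\otimes a_{2}\cdot\psi(b)$ on the right-hand side, which coincides with $\Delta(a\cdot b)$ by (\ref{infinelements}). The main obstacle is really just the careful bookkeeping of the four commuting maps $\alpha,\beta,\psi,\omega$ throughout, but since (\ref{infinb}) supplies every commutation that arises and (\ref{infinbia}) makes $\psi,\omega$ multiplicative, nothing genuinely hard occurs.
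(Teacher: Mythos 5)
Your proposal is correct and follows essentially the same route as the paper's proof: verify that the stated actions make $(A\otimes A,\alpha\otimes\alpha,\beta\otimes\beta)$ an $A$-bimodule (multiplicativity of the actions, the left/right module axioms via BiHom-associativity and multiplicativity of $\omega$ and $\psi$, and the compatibility (\ref{BHbim}) via the commutations in (\ref{infinb})), and then observe that the derivation identity is just (\ref{infinelements}) rewritten through (\ref{infinMod}). The individual computations you sketch match the ones in the paper.
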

\begin{proof}
We prove that $(A\otimes A, \alpha \otimes \alpha, \beta \otimes \beta)$
with the left and right actions defined above is an $A$-bimodule. For $a,
a^{\prime }, b, c\in A$, it is easy to check that $(\alpha \otimes \alpha)
(a \cdot (b\otimes c))= \alpha(a) \cdot ((\alpha \otimes \alpha)(b\otimes
c)) $ and $(\beta \otimes \beta) (a \cdot (b\otimes c))= \beta(a) \cdot
((\beta \otimes \beta)(b\otimes c))$. We compute:
\begin{eqnarray*}
\alpha(a)\cdot (a^{\prime }\cdot (b\otimes c)) &\overset{(\ref{infinMod})}{=}
&\alpha(a)\cdot (\omega(a^{\prime })\cdot b\otimes \beta(c)) \\
&\overset{(\ref{infinMod})}{=}&\omega \alpha(a)\cdot (\omega(a^{\prime })\cdot b)\otimes \beta ^2(c)\\
&\overset{(\ref{BHassoc})}{=}&\omega (a\cdot a^{\prime })\cdot \beta (b)\otimes \beta ^2(c)\\
&\overset{(\ref{infinMod})}{=}&(a\cdot a^{\prime })\cdot (\beta(b)\otimes \beta(c))\\
&=& (a\cdot a^{\prime })\cdot ((\beta \otimes \beta)(b\otimes c)).
\end{eqnarray*}
Thus $(A\otimes A, \alpha \otimes \alpha, \beta \otimes \beta)$ is a left $A$%
-module. Similarly one proves that it is also a right $A$-module. Finally,
we compute:
\begin{eqnarray*}
(a\cdot (b\otimes c))\cdot \beta(a^{\prime }) &\overset{(\ref{infinMod})}{=}%
& (\omega(a)\cdot b \otimes \beta(c))\cdot \beta(a^{\prime }) \\
&\overset{(\ref{infinMod})}{=}&\alpha\omega(a)\cdot \alpha(b)\otimes \beta (c)\cdot 
\psi\beta(a^{\prime }) \\
&\overset{(\ref{infinb})}{=}&\omega(\alpha (a))\cdot \alpha(b)\otimes \beta (c\cdot 
\psi(a^{\prime })) \\
&\overset{(\ref{infinMod})}{=}&\alpha(a)\cdot (\alpha(b)\otimes c\cdot 
\psi(a^{\prime })) \\
&\overset{(\ref{infinMod})}{=}& \alpha(a) \cdot ((b\otimes c)\cdot a^{\prime
}).
\end{eqnarray*}
Hence, $(A\otimes A, \alpha \otimes \alpha, \beta \otimes \beta)$ is indeed
an $A$-bimodule. From (\ref{infinelements}) and (\ref{infinMod}), we get $%
\Delta(a\cdot b)=a\cdot \Delta(b)+\Delta(a) \cdot b$, that is $\Delta $ is a
derivation.
\end{proof}

We show now how to obtain infinitesimal BiHom-bialgebras from infinitesimal bialgebras. 
\begin{proposition}
Let $(A, \mu , \Delta )$ be an infinitesimal bialgebra and let $\alpha , \beta ,
\psi , \omega :A\rightarrow A$ be morphisms of algebras and coalgebras such
that any two of them commute. Then $A_{(\alpha , \beta , \psi , \omega )}:=
(A, \mu _{(\alpha , \beta )}:=\mu \circ (\alpha \otimes \beta ), \Delta
_{(\psi , \omega )}:=(\omega \otimes \psi )\circ \Delta , \alpha , \beta ,
\psi , \omega )$ is an infinitesimal BiHom-bialgebra, called the Yau twist
of $(A, \mu , \Delta )$.
\end{proposition}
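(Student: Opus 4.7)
The plan is to check the axioms of an infinitesimal BiHom-bialgebra one at a time for $A_{(\alpha,\beta,\psi,\omega)}$, relying on the hypotheses that $\alpha,\beta,\psi,\omega$ pairwise commute and are simultaneously algebra and coalgebra morphisms of $(A,\mu,\Delta)$.

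First, $(A,\mu_{(\alpha,\beta)},\alpha,\beta)$ is BiHom-associative by the standard Yau twist construction recalled after the definition of BiHom-associative algebras. Dually, $(A,\Delta_{(\psi,\omega)},\psi,\omega)$ is BiHom-coassociative: this is immediate from the coassociativity of $\Delta$ together with the fact that $\psi,\omega$ commute and are coalgebra morphisms. The compatibility block (\ref{infinb}) is literally the hypothesis that the four maps pairwise commute. For (\ref{infinbi}), note that since $\alpha$ is a coalgebra morphism, $(\alpha\otimes\alpha)\circ\Delta=\Delta\circ\alpha$, and composing with $\omega\otimes\psi$ on the left (and using that $\alpha$ commutes with both $\omega$ and $\psi$) yields $(\alpha\otimes\alpha)\circ\Delta_{(\psi,\omega)}=\Delta_{(\psi,\omega)}\circ\alpha$; the statement for $\beta$ is analogous. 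Condition (\ref{infinbia}) follows because $\psi$ and $\omega$ are algebra morphisms and commute with $\alpha$ and $\beta$, so they are multiplicative for $\mu_{(\alpha,\beta)}=\mu\circ(\alpha\otimes\beta)$ as well.

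The substantive step is (\ref{infin}), i.e.\ the compatibility between $\mu_{(\alpha,\beta)}$ and $\Delta_{(\psi,\omega)}$. I would compute the left-hand side on $a\otimes b$ by first writing
\begin{eqnarray*}
\Delta_{(\psi,\omega)}(\mu_{(\alpha,\beta)}(a\otimes b))=(\omega\otimes\psi)\circ\Delta(\alpha(a)\cdot\beta(b)),
\end{eqnarray*}
then expanding with the infinitesimal bialgebra axiom for $(A,\mu,\Delta)$, using that $\Delta\circ\alpha=(\alpha\otimes\alpha)\circ\Delta$ and $\Delta\circ\beta=(\beta\otimes\beta)\circ\Delta$ to push $\alpha,\beta$ inside the Sweedler components, and finally applying $\omega\otimes\psi$ together with the multiplicativity of $\omega$ and $\psi$. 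This yields the sum
\begin{eqnarray*}
\omega\alpha(a)\cdot\omega\beta(b_1)\otimes\psi\beta(b_2)+\omega\alpha(a_1)\otimes\psi\alpha(a_2)\cdot\psi\beta(b).
\end{eqnarray*}
On the right-hand side, I would unfold $(\mu_{(\alpha,\beta)}\otimes\beta)\circ(\omega\otimes\Delta_{(\psi,\omega)})$ and $(\alpha\otimes\mu_{(\alpha,\beta)})\circ(\Delta_{(\psi,\omega)}\otimes\psi)$ directly on $a\otimes b$, and then match the two expressions using only the commutation relations among $\alpha,\beta,\psi,\omega$.

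The computation is entirely mechanical; the only thing to be careful about is the bookkeeping of which map hits which tensor factor, which is governed by the assumption that any two of $\alpha,\beta,\psi,\omega$ commute. So the ``main obstacle'' is merely organizational rather than conceptual—once the four maps can be freely slid past one another, the two sides of (\ref{infin}) reduce to manifestly identical Sweedler expressions, completing the proof.
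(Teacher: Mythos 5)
Your proposal is correct and follows essentially the same route as the paper: the structural axioms are dispatched exactly as you describe, and the verification of the compatibility condition (\ref{infin}) is the same Sweedler computation, merely run from the left-hand side $\Delta_{(\psi,\omega)}(\mu_{(\alpha,\beta)}(a\otimes b))$ toward the right-hand side instead of the reverse. The two displayed expressions you would match are precisely the intermediate expressions appearing in the paper's proof, so nothing further is needed.
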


\begin{proof}
The fact that $(A, \mu _{(\alpha , \beta )}, \alpha , \beta
) $ is a BiHom-associative algebra and $(A, \Delta _{(\psi , \omega )}, \psi
, \omega )$ is a BiHom-coassociative coalgebra is known from \cite{gmmp}. It is easy to see that
conditions (\ref{infinb})-(\ref{infinbia}) are satisfied, so we only need to
prove (\ref{infin}). For simplicity, we denote $\mu _{(\alpha , \beta
)}(a\otimes b)=a*b=\alpha (a)\cdot \beta (b)$ and $\Delta _{(\psi , \omega
)}(a)=a_{[1]}\otimes a_{[2]}= \omega (a_1)\otimes \psi (a_2)$, for all $a,
b\in A$. We compute:\\[2mm]
${\;\;\;\;\;\;\;\;\;}$$\omega (a)*b_{[1]}\otimes \beta (b_{[2]})+\alpha
(a_{[1]})\otimes a_{[2]}*\psi (b)$
\begin{eqnarray*}
&=&\omega (a)*\omega (b_1)\otimes \beta \psi (b_2)+\alpha \omega
(a_1)\otimes \psi (a_2)*\psi (b) \\
&=&\alpha \omega (a)\cdot \beta \omega (b_1)\otimes \beta \psi (b_2)+\alpha \omega
(a_1)\otimes \alpha \psi (a_2) \cdot \beta \psi (b) \\
&=&\omega (\alpha (a)\cdot \beta (b_1))\otimes \beta \psi (b_2)+\alpha \omega
(a_1)\otimes \psi (\alpha (a_2)\cdot \beta (b)) \\
&=&(\omega \otimes \psi )(\alpha (a)\cdot \beta (b_1)\otimes \beta (b_2)+\alpha
(a_1)\otimes \alpha (a_2)\cdot \beta (b)) \\
&=&(\omega \otimes \psi )(\alpha (a)\cdot \beta (b)_1\otimes \beta (b)_2+\alpha
(a)_1\otimes \alpha (a)_2\cdot \beta (b)) \\
&=&(\omega \otimes \psi )(\Delta (\alpha (a)\cdot \beta (b)))=\Delta _{(\psi ,
\omega )}(a*b),
\end{eqnarray*}
finishing the proof.
\end{proof}

We recall the following well-known concept:
\begin{definition}
Let $A$ be an algebra, $\sigma ,\tau :A\rightarrow A$ algebra maps and 
$D:A\rightarrow A$ a linear map. We call $D$ a $(\tau , \sigma )$-derivation if 
$D(a\cdot b)=D(a)\cdot \tau (b)+\sigma (a)\cdot D(b)$, for all $a, b\in A$.
\end{definition}
\begin{remark}
Let $(A,\mu ,\Delta ,\alpha ,\beta ,\psi ,\omega )$ be an infinitesimal BiHom-bialgebra and define the linear map 
$D:A\rightarrow A$, $D:=\mu \circ \Delta $, that is $D(a)=a_1\cdot a_2$, for all $a\in A$. Then $D$ is a  
$(\beta \psi , \alpha \omega )$-derivation. Indeed, by using (\ref{infinelements}) and (\ref{BHassoc}) we can compute: 
\begin{eqnarray*}
D(a\cdot b)&=&\mu (\omega (a)\cdot b_1\otimes \beta (b_2)+\alpha (a_1)\otimes a_2\cdot \psi (b))\\
&=&(\omega (a)\cdot b_1)\cdot \beta (b_2)+\alpha (a_1)\cdot (a_2\cdot \psi (b))\\
&=&\alpha \omega (a)\cdot (b_1\cdot b_2)+(a_1\cdot a_2)\cdot \beta \psi (b)\\
&=& \alpha \omega (a)\cdot D(b)+D(a)\cdot \beta \psi (b), \;\;\;q.e.d.
\end{eqnarray*}
\end{remark}

We want to prove that one can associate a left BiHom-pre-Lie algebra to an infinitesimal BiHom-bialgebra 
$(A,\mu ,\Delta ,\alpha ,\beta ,\psi ,\omega )$.  
We need to guess the multiplication and the structure maps of the 
left BiHom-pre-Lie algebra we are looking for. We proceed as follows. We assume first that the infinitesimal BiHom-bialgebra 
$A$ is BiHom-commutative. In this case, one can check that the hypotheses of Proposition \ref{generalGD} are 
satisfied,  for $\gamma =\alpha ^2\beta \psi \omega $, $\lambda =\alpha \beta $, $\xi =id_A$ and the map $D$ 
defined by $D(a)=\alpha \beta \psi (a_1)\cdot \alpha ^2\omega (a_2)$, for all $a\in A$. So, by 
Proposition \ref{generalGD}, we obtain a left BiHom-pre-Lie algebra 
$(A, \ast , \alpha ^2\beta , \alpha ^{2}\beta ^{2}\psi \omega )$, 
where the multiplication $\ast $ is defined by $a\ast b=\alpha \beta (a)\cdot D(b)$, which, by using 
BiHom-associativity and BiHom-commutativity, may be expressed as follows: 
\begin{eqnarray*}
a\ast b&=&\alpha \beta (a)\cdot D(b)=\alpha \beta (a)\cdot (\alpha \beta \psi (b_1)\cdot \alpha ^2\omega (b_2))\\
&=&(\beta (a)\cdot \alpha \beta \psi (b_1))\cdot \alpha ^2\beta \omega (b_2)\\
&=&(\beta ^2\psi (b_1)\cdot \alpha (a))\cdot \alpha ^2\beta \omega (b_2)\\
&=&\alpha \beta ^2\psi (b_1)\cdot (\alpha (a)\cdot \alpha ^2\omega (b_2)).
\end{eqnarray*}
It turns out that this formula works also in the general case. Indeed, we have:
\begin{theorem}\label{infprelie}
Let $(A,\mu ,\Delta ,\alpha ,\beta ,\psi ,\omega )$ be an infinitesimal
BiHom-bialgebra.  
Define a new multiplication on $A$ by
\begin{eqnarray*}
a\ast b &=&\alpha \beta ^{2}\psi \left( b_{1}\right) \cdot \left[ \alpha
\left( a\right) \cdot \alpha ^{2}\omega \left( b_{2}\right) \right] 
=\left[ \beta ^{2}\psi \left( b_{1}\right) \cdot \alpha \left( a\right) %
\right] \cdot \alpha ^{2}\beta \omega \left( b_{2}\right).
\end{eqnarray*}%
Then $(A,\ast ,\alpha ^{2}\beta ,\alpha ^{2}\beta ^{2}\psi \omega )$ is a
left BiHom-pre-Lie algebra.
\end{theorem}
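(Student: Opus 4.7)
The plan is to verify the left BiHom-pre-Lie axiom (\ref{lBHpL}) for $\ast$ by a direct, if lengthy, computation, following the strategy suggested by the BiHom-commutative motivation preceding the theorem. First, I would handle the easy preliminaries: the maps $\tilde{\alpha} := \alpha^{2}\beta$ and $\tilde{\beta} := \alpha^{2}\beta^{2}\psi\omega$ commute (since $\alpha,\beta,\psi,\omega$ pairwise commute), and they are multiplicative with respect to $\ast$, because $\alpha,\beta,\psi,\omega$ preserve $\mu$ by (\ref{infinbia}) and intertwine $\Delta$ up to their tensor product by (\ref{infinbi}); they therefore pass through each of the two factors in the definition $a \ast b = \alpha\beta^{2}\psi(b_{1}) \cdot [\alpha(a) \cdot \alpha^{2}\omega(b_{2})]$.

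The substance of the proof is then the identity
$$\tilde{\alpha}\tilde{\beta}(x) \ast (\tilde{\alpha}(y) \ast z) - (\tilde{\beta}(x) \ast \tilde{\alpha}(y)) \ast \tilde{\beta}(z) = \tilde{\alpha}\tilde{\beta}(y) \ast (\tilde{\alpha}(x) \ast z) - (\tilde{\beta}(y) \ast \tilde{\alpha}(x)) \ast \tilde{\beta}(z),$$
i.e., that the LHS is symmetric in $x$ and $y$. The heart of the argument is to compute $\Delta(\tilde{\alpha}(y) \ast z)$: since $\tilde{\alpha}(y) \ast z = \alpha\beta^{2}\psi(z_{1}) \cdot [\alpha^{3}\beta(y) \cdot \alpha^{2}\omega(z_{2})]$ is a product whose right factor is itself a product, two iterated applications of the infinitesimal axiom (\ref{infinelements}) yield three summands in $\Delta(\tilde{\alpha}(y) \ast z)$: one indexed by $(z_{1}, y, z_{21}, z_{22})$, one indexed by $(z_{1}, y_{1}, y_{2}, z_{2})$, and one indexed by $(z_{11}, z_{12}, y, z_{2})$.

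Substituting these three summands back into the outer $\ast$ via $a \ast b = \alpha\beta^{2}\psi(b_{1}) \cdot [\alpha(a) \cdot \alpha^{2}\omega(b_{2})]$ produces three triple BiHom-products. A direct bookkeeping check shows that the second of these, the one indexed by $(z_{1}, y_{1}, y_{2}, z_{2})$, after a couple of applications of BiHom-associativity (\ref{BHassoc}) to shift twists across products, coincides exactly with $(\tilde{\beta}(x) \ast \tilde{\alpha}(y)) \ast \tilde{\beta}(z)$ and so cancels in the difference. The remaining two summands, indexed respectively by $(z_{1}, z_{21}, z_{22})$ and $(z_{11}, z_{12}, z_{2})$, appear asymmetric individually; but the BiHom-coassociativity identity (\ref{BHcoassoc}), in the form $z_{11} \otimes z_{12} \otimes \psi(z_{2}) = \omega(z_{1}) \otimes z_{21} \otimes z_{22}$, converts the $(z_{11}, z_{12}, z_{2})$-summand into one indexed by $(z_{1}, z_{21}, z_{22})$, after which a final round of BiHom-associativity puts the two contributions into a manifestly $x \leftrightarrow y$ symmetric combination. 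This yields the desired identity.

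The main obstacle is precisely the bookkeeping: each application of (\ref{BHassoc}) requires the outer left factor to lie in the image of $\alpha$ or the outer right factor in the image of $\beta$, and one must verify this at every step. The specific exponents $\alpha\beta^{2}\psi$ and $\alpha^{2}\omega$ in the definition of $\ast$, together with the structure maps $\tilde{\alpha} = \alpha^{2}\beta$ and $\tilde{\beta} = \alpha^{2}\beta^{2}\psi\omega$, are calibrated so that these twist compatibilities hold at every reassociation; the proof is essentially a verification that this calibration works consistently throughout the computation.
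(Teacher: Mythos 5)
Your proposal is correct and follows essentially the same route as the paper's proof: expand $\Delta(\tilde{\alpha}(y)\ast z)$ via two applications of the infinitesimal compatibility into three summands, identify the middle summand (the one carrying $y_1\otimes y_2$) with $(\tilde{\beta}(x)\ast\tilde{\alpha}(y))\ast\tilde{\beta}(z)$ so that it cancels, and symmetrize the remaining two summands using BiHom-coassociativity together with BiHom-associativity. The only cosmetic difference is the direction in which you apply coassociativity to align the two surviving terms, which is immaterial.
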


\begin{proof} It is easy to see that $\alpha ^2\beta (x\ast y)=\alpha ^2\beta (x)\ast \alpha ^2\beta (y)$ and 
$\alpha ^{2}\beta ^{2}\psi \omega (x\ast y)=\alpha ^{2}\beta ^{2}\psi \omega (x)\ast \alpha ^{2}\beta ^{2}\psi \omega (y)$, 
for all $x, y\in A$. 
So we only need to check that, for all $x, y, z\in A$, we have 
\begin{eqnarray*}
&&(\alpha ^{2}\beta ^{2}\psi \omega (x)\ast \alpha ^{2}\beta (y))\ast \alpha
^{2}\beta ^{2}\psi \omega (z)-\alpha ^{4}\beta ^{3}\psi \omega (x)\ast
(\alpha ^{2}\beta (y)\ast z) \\
&&\;\;\;\;\;\;\;\;=(\alpha ^{2}\beta ^{2}\psi \omega (y)\ast \alpha ^{2}\beta (x))\ast
\alpha ^{2}\beta ^{2}\psi \omega (z)-\alpha ^{4}\beta ^{3}\psi \omega
(y)\ast (\alpha ^{2}\beta (x)\ast z).
\end{eqnarray*}%
For $a, b, c \in A$, we compute:
\begin{eqnarray*}
\left( a\ast b\right) \ast c &=&\alpha \beta ^{2}\psi \left( c_{1}\right)
\cdot \left[ \alpha \left( a\ast b\right) \cdot \alpha ^{2}\omega \left(
c_{2}\right) \right] \\
&=&\alpha \beta ^{2}\psi \left( c_{1}\right) \cdot \left[ \alpha \left\{
\alpha \beta ^{2}\psi \left( b_{1}\right) \cdot \left[ \alpha \left(
a\right) \cdot \alpha ^{2}\omega \left( b_{2}\right) \right] \right\} \cdot
\alpha ^{2}\omega \left( c_{2}\right) \right] \\
&=&\alpha \beta ^{2}\psi \left( c_{1}\right) \cdot \left[ \left\{ \alpha
^{2}\beta ^{2}\psi \left( b_{1}\right) \cdot \left[ \alpha ^{2}\left(
a\right) \cdot \alpha ^{3}\omega \left( b_{2}\right) \right] \right\} \cdot
\alpha ^{2}\omega \left( c_{2}\right) \right], 
\end{eqnarray*}%
hence\\[2mm]
${\;\;\;}$
$(\alpha ^{2}\beta ^{2}\psi \omega (x)\ast \alpha ^{2}\beta (y))\ast \alpha
^{2}\beta ^{2}\psi \omega (z)$
\begin{eqnarray*}
&=&\alpha \beta ^{2}\psi \left( \alpha ^{2}\beta
^{2}\psi \omega (z_{1})\right) \cdot \\
&&\left[ \left\{ \alpha ^{2}\beta ^{2}\psi \left( \alpha ^{2}\beta
(y_{1})\right) \cdot \left[ \alpha ^{2}\left( \alpha ^{2}\beta ^{2}\psi
\omega (x)\right) \cdot \alpha ^{3}\omega \left( \alpha ^{2}\beta
(y_{2})\right) \right] \right\} \cdot \alpha ^{2}\omega \left( \alpha
^{2}\beta ^{2}\psi \omega (z_{2})\right) \right] \\
&=&\alpha ^{3}\beta ^{4}\psi ^{2}\omega (z_{1})\cdot \left\{ \left[ \alpha
^{4}\beta ^{3}\psi \left( y_{1}\right) \cdot \left( \alpha ^{4}\beta
^{2}\psi \omega (x)\cdot \alpha ^{5}\beta \omega \left( y_{2}\right) \right) %
\right] \cdot \alpha ^{4}\beta ^{2}\omega ^{2}\psi (z_{2})\right\} .
\end{eqnarray*}
For elements $s, t, w\in A$ we have, by (\ref{infinelements}), 
\begin{eqnarray*}
\Delta (t\cdot w) &=&\omega (t)\cdot w_{1}\otimes \beta (w_{2})+\alpha
(t_{1})\otimes t_{2}\cdot \psi (w),
\end{eqnarray*}
and so we can compute, by using again (\ref{infinelements}):
\begin{eqnarray*}
\Delta \left( s\cdot \left( t\cdot w\right) \right) &=&\omega (s)\cdot
\left( t\cdot w\right) _{1}\otimes \beta (\left( t\cdot w\right)
_{2})+\alpha (s_{1})\otimes s_{2}\cdot \psi (t\cdot w) \\
&=&\omega (s)\cdot \left( \omega (t)\cdot w_{1}\right) \otimes \beta (\beta
(w_{2}))+\omega (s)\cdot \left( \alpha (t_{1})\right) \otimes \beta
(t_{2}\cdot \psi (w))\\
&&+\alpha (s_{1})\otimes s_{2}\cdot \psi (t\cdot w) \\
&=&\omega (s)\cdot \left( \omega (t)\cdot w_{1}\right) \otimes \beta
^{2}(w_{2})+\omega (s)\cdot \alpha (t_{1})\otimes \beta (t_{2})\cdot \beta
\psi (w)\\
&&+\alpha (s_{1})\otimes s_{2}\cdot \psi (t\cdot w), 
\end{eqnarray*}
and we use this formula to compute:
\begin{eqnarray*}
\Delta \left( b\ast c\right) &=&\Delta \left( \alpha \beta ^{2}\psi \left(
c_{1}\right) \cdot \left[ \alpha \left( b\right) \cdot \alpha ^{2}\omega
\left( c_{2}\right) \right] \right) \\
&=&\omega (\alpha \beta ^{2}\psi \left(
c_{1}\right) )\cdot \left( \omega (\alpha \left( b\right) )\cdot \alpha
^{2}\omega \left( c_{2}\right) _{1}\right) \otimes \beta ^{2}(\alpha
^{2}\omega \left( c_{2}\right) _{2})\\
&&+\omega (\alpha \beta ^{2}\psi \left( c_{1}\right) )\cdot \alpha (\alpha
\left( b\right) _{1})\otimes \beta (\alpha \left( b\right)_{2})\cdot \beta
\psi (\alpha ^{2}\omega \left( c_{2}\right) )\\
&&+\alpha (\alpha \beta ^{2}\psi \left( c_{1}\right) _{1})\otimes \alpha
\beta ^{2}\psi \left( c_{1}\right) _{2}\cdot \psi (\alpha \left( b\right)
\cdot \alpha ^{2}\omega \left( c_{2}\right) )\\
&=&\alpha \beta ^{2}\psi \omega \left( c_{1}\right)\cdot \left[ \alpha
\omega \left( b\right) \cdot \alpha ^{2}\omega \left( c_{2_{1}}\right) %
\right] \otimes \alpha ^{2}\beta ^{2}\omega \left( c_{2_{2}}\right) \\
&&+\alpha \beta ^{2}\psi \omega \left( c_{1}\right) \cdot \alpha ^{2}\left(
b_{1}\right) \otimes \alpha \beta \left( b_{2}\right) \cdot \alpha ^{2}\beta
\psi \omega \left( c_{2}\right) \\
&&+\alpha ^{2}\beta ^{2}\psi \left( c_{1_{1}}\right) \otimes \alpha \beta
^{2}\psi \left( c_{1_{2}}\right) \cdot \left[ \alpha \psi \left( b\right)
\cdot \alpha ^{2}\psi \omega \left( c_{2}\right) \right]. 
\end{eqnarray*}
Finally, we use this formula to compute:\\[2mm]
${\;\;\;\;}$
$a\ast \left( b\ast c\right)$
\begin{eqnarray*}
&=&\alpha \beta ^{2}\psi (\left( b\ast c\right)
_{1})\cdot \left[ \alpha \left( a\right) \cdot \alpha ^{2}\omega (\left( b\ast
c\right) _{2})\right]\\
&=&\alpha \beta ^{2}\psi \left\{ \alpha \beta ^{2}\psi \omega \left(
c_{1}\right) \cdot \left[ \alpha \omega \left( b\right) \cdot \alpha
^{2}\omega \left( c_{2_{1}}\right) \right] \right\} \cdot \left[ \alpha
\left( a\right) \cdot \alpha ^{2}\omega \alpha ^{2}\beta ^{2}\omega \left(
c_{2_{2}}\right) \right] \\
&&+\alpha \beta ^{2}\psi \left[ \alpha \beta ^{2}\psi \omega \left(
c_{1}\right) \cdot \alpha ^{2}\left( b_{1}\right) \right] \cdot \left\{
\alpha \left( a\right) \cdot \alpha ^{2}\omega \left[ \alpha \beta \left(
b_{2}\right) \cdot \alpha ^{2}\beta \psi \omega \left( c_{2}\right) \right]
\right\} \\
&&+\alpha \beta ^{2}\psi \left( \alpha ^{2}\beta ^{2}\psi \left(
c_{1_{1}}\right) \right) \cdot \left[ \alpha \left( a\right) \cdot \alpha
^{2}\omega \left\{ \alpha \beta ^{2}\psi \left( c_{1_{2}}\right) \cdot 
\left[ \alpha \psi \left( b\right) \cdot \alpha ^{2}\psi \omega \left(
c_{2}\right) \right] \right\} \right] \\
&=&\left\{ \alpha \beta ^{2}\psi \alpha \beta ^{2}\psi \omega \left(
c_{1}\right) \cdot \left[ \alpha \beta ^{2}\psi \alpha \omega \left(
b\right) \cdot \alpha \beta ^{2}\psi \alpha ^{2}\omega \left(
c_{2_{1}}\right) \right] \right\} \cdot \left[ \alpha \left( a\right) \cdot
\alpha ^{4}\beta ^{2}\omega ^{2}\left( c_{2_{2}}\right) \right] \\
&&+\left[ \alpha \beta ^{2}\psi \alpha \beta ^{2}\psi \omega \left(
c_{1}\right) \cdot \alpha \beta ^{2}\psi \alpha ^{2}\left( b_1\right)
\right] \cdot \left\{ \alpha \left( a\right) \cdot \left[ \alpha
^{2}\omega \alpha \beta \left( b_{2}\right) \cdot \alpha ^{2}\omega \alpha
^{2}\beta \psi \omega \left( c_{2}\right) \right] \right\} \\
&&+\alpha ^{3}\beta ^{4}\psi ^{2}\left( c_{1_{1}}\right)
\cdot \left[ \alpha \left( a\right) \cdot \left\{ \alpha ^{2}\omega \alpha
\beta ^{2}\psi \left( c_{1_{2}}\right) \cdot \left[ \alpha ^{2}\omega
\alpha \psi \left( b\right) \cdot \alpha ^{2}\omega \alpha ^{2}\psi \omega
\left( c_{2}\right) \right] \right\} \right] \\
&=&\left\{ \alpha ^{2}\beta ^{4}\psi ^{2}\omega \left( c_{1}\right) \cdot
\left[ \alpha ^{2}\beta ^{2}\psi \omega \left( b\right) \cdot \alpha
^{3}\beta ^{2}\psi \omega \left( c_{2_{1}}\right) \right] \right\} \cdot %
\left[ \alpha \left( a\right) \cdot \alpha ^{4}\beta ^{2}\omega ^{2}\left(
c_{2_{2}}\right) \right] \\
&&+\left[ \alpha ^{2}\beta ^{4}\psi ^{2}\omega \left( c_{1}\right) \cdot
\alpha ^{3}\beta ^{2}\psi \left( b_{1}\right) \right] \cdot \left\{ \alpha
\left( a\right) \cdot \left[ \alpha ^{3}\omega \beta \left( b_{2}\right)
\cdot \alpha ^{4}\beta \psi \omega ^{2}\left( c_{2}\right) \right] \right\} 
\\
&&+\alpha ^{3}\beta ^{4}\psi ^{2}\left( c_{1_{1}}\right) 
\cdot \left[ \alpha \left( a\right) \cdot \left\{ \alpha ^{3}\beta ^{2}\psi
\omega \left( c_{1_{2}}\right) \cdot \left[ \alpha ^{3}\psi \omega \left(
b\right) \cdot \alpha ^{4}\psi \omega ^{2}\left( c_{2}\right) \right]
\right\} \right].
\end{eqnarray*}
By using this formula for $a\ast \left( b\ast c\right)$, we can compute: \\[2mm]
$\alpha ^{4}\beta ^{3}\psi \omega (x)\ast \left( \alpha ^{2}\beta (y)\ast
z\right)$
\begin{eqnarray*}
&=&\left\{ \alpha ^{2}\beta ^{4}\psi ^{2}\omega \left( z_{1}\right)
\cdot \left[ \alpha ^{2}\beta ^{2}\psi \omega \left( \alpha ^{2}\beta
(y)\right) \cdot \alpha ^{3}\beta ^{2}\psi \omega \left( z_{2_{1}}\right) %
\right] \right\} \cdot \left[ \alpha \left( \alpha ^{4}\beta ^{3}\psi \omega
(x)\right) \cdot \alpha ^{4}\beta ^{2}\omega ^{2}\left( z_{2_{2}}\right) %
\right] \\
&&+\left[ \alpha ^{2}\beta ^{4}\psi ^{2}\omega \left( z_{1}\right) \cdot
\alpha ^{3}\beta ^{2}\psi \left( \alpha ^{2}\beta (y_{1})\right) \right]
\cdot \left\{ \alpha \left( \alpha ^{4}\beta ^{3}\psi \omega (x)\right)
\cdot \left[ \alpha ^{3}\omega \beta \left( \alpha ^{2}\beta (y_{2})\right)
\cdot \alpha ^{4}\beta \psi \omega ^{2}\left( z_{2}\right) \right] \right\} 
\\
&&+\alpha ^{3}\beta ^{4}\psi ^{2}\left( z_{1_{1}}\right)\cdot
\left[ \alpha \left( \alpha ^{4}\beta ^{3}\psi \omega (x)\right) \cdot
\left\{ \alpha ^{3}\beta ^{2}\psi \omega \left( z_{1_{2}}\right) \cdot 
\left[ \alpha ^{3}\psi \omega \left( \alpha ^{2}\beta (y)\right) \cdot
\alpha ^{4}\psi \omega ^{2}\left( z_{2}\right) \right] \right\} \right] \\
&=&\left\{ \alpha ^{2}\beta ^{4}\psi ^{2}\omega \left( z_{1}\right) \cdot
\left[ \alpha ^{4}\beta ^{3}\psi \omega (y)\cdot \alpha ^{3}\beta ^{2}\psi
\omega \left( z_{2_{1}}\right) \right] \right\} \cdot \left[ \alpha
^{5}\beta ^{3}\psi \omega (x)\cdot \alpha ^{4}\beta ^{2}\omega ^{2}\left(
z_{2_{2}}\right) \right] \\
&&+\left[ \alpha ^{2}\beta ^{4}\psi ^{2}\omega \left( z_{1}\right) \cdot
\alpha ^{5}\beta ^{3}\psi (y_{1})\right] \cdot \left\{ \alpha ^{5}\beta
^{3}\psi \omega (x)\cdot \left[ \alpha ^{5}\beta ^{2}\omega \left(
y_{2}\right) \cdot \alpha ^{4}\beta \psi \omega ^{2}\left( z_{2}\right) %
\right] \right\} \\
&&+\alpha ^{3}\beta ^{4}\psi ^{2}\left( z_{1_{1}}\right) \cdot \left[ \alpha
^{5}\beta ^{3}\psi \omega (x)\cdot \left\{ \alpha ^{3}\beta ^{2}\psi \omega
\left( z_{1_{2}}\right) \cdot \left[ \alpha ^{5}\beta \psi \omega (y)\cdot
\alpha ^{4}\psi \omega ^{2}\left( z_{2}\right) \right] \right\} \right]. 
\end{eqnarray*}
We apply repeatedly the BiHom-associativity condition to compute the second term in this sum: \\[2mm]
${\;\;\;\;\;}$
$\left[ \alpha ^{2}\beta ^{4}\psi ^{2}\omega \left( z_{1}\right) \cdot
\alpha ^{5}\beta ^{3}\psi (y_{1})\right] \cdot \left\{ \alpha ^{5}\beta
^{3}\psi \omega (x)\cdot \left[ \alpha ^{5}\beta ^{2}\omega \left(
y_{2}\right) \cdot \alpha ^{4}\beta \psi \omega ^{2}\left( z_{2}\right) 
\right] \right\}$
\begin{eqnarray*}
&=&\alpha ^{3}\beta ^{4}\psi ^{2}\omega \left( z_{1}\right) \cdot \left[
\alpha ^{5}\beta ^{3}\psi (y_{1})\cdot \left\{ \alpha ^{5}\beta ^{2}\psi
\omega (x)\cdot \left[ \alpha ^{5}\beta \omega \left( y_{2}\right) \cdot
\alpha ^{4}\psi \omega ^{2}\left( z_{2}\right) \right] \right\} \right] \\
&=&\alpha ^{3}\beta ^{4}\psi ^{2}\omega \left( z_{1}\right) \cdot \left[
\alpha ^{5}\beta ^{3}\psi (y_{1})\cdot \left\{ \left[ \alpha ^{4}\beta
^{2}\psi \omega (x)\cdot \alpha ^{5}\beta \omega \left( y_{2}\right) \right]
\cdot \alpha ^{4}\beta \psi \omega ^{2}\left( z_{2}\right) \right\} \right]
\\
&=&\alpha ^{3}\beta ^{4}\psi ^{2}\omega \left( z_{1}\right) \cdot \left[
\left\{ \alpha ^{4}\beta ^{3}\psi (y_{1})\cdot \left[ \alpha ^{4}\beta
^{2}\psi \omega (x)\cdot \alpha ^{5}\beta \omega \left( y_{2}\right) \right]
\right\} \cdot \alpha ^{4}\beta ^{2}\psi \omega ^{2}\left( z_{2}\right) %
\right], 
\end{eqnarray*}
and this coincides with the expression we obtained before for 
$(\alpha ^{2}\beta ^{2}\psi \omega (x)\ast \alpha ^{2}\beta (y))\ast
\alpha ^{2}\beta ^{2}\psi \omega (z)$.

Thus the only thing that remains to be proved is that the expression 
\begin{gather*}
\left\{ \alpha ^{2}\beta ^{4}\psi ^{2}\omega \left( z_{1}\right) \cdot \left[
\alpha ^{4}\beta ^{3}\psi \omega (y)\cdot \alpha ^{3}\beta ^{2}\psi \omega
\left( z_{2_{1}}\right) \right] \right\} \cdot \left[ \alpha ^{5}\beta
^{3}\psi \omega (x)\cdot \alpha ^{4}\beta ^{2}\omega ^{2}\left(
z_{2_{2}}\right) \right] \\
\;\;\;\;\;\;\;\;\;\;+\alpha ^{3}\beta ^{4}\psi ^{2}\left( z_{1_{1}}\right) \cdot \left[ \alpha
^{5}\beta ^{3}\psi \omega (x)\cdot \left\{ \alpha ^{3}\beta ^{2}\psi \omega
\left( z_{1_{2}}\right) \cdot \left[ \alpha ^{5}\beta \psi \omega (y)\cdot
\alpha ^{4}\psi \omega ^{2}\left( z_{2}\right) \right] \right\} \right]
\end{gather*}
is symmetric in $x$ and $y$. We compute, by applying both the BiHom-associativity condition and the BiHom-coassociativity 
condition $(\Delta \otimes \psi )\circ \Delta =(\omega \otimes \Delta )\circ \Delta $: \\[2mm]
${\;\;\;\;\;}$
$\left\{ \alpha ^{2}\beta ^{4}\psi ^{2}\omega \left( z_{1}\right) \cdot \left[
\alpha ^{4}\beta ^{3}\psi \omega (y)\cdot \alpha ^{3}\beta ^{2}\psi \omega
\left( z_{2_{1}}\right) \right] \right\} \cdot \left[ \alpha ^{5}\beta
^{3}\psi \omega (x)\cdot \alpha ^{4}\beta ^{2}\omega ^{2}\left(
z_{2_{2}}\right) \right]$
\begin{eqnarray*}
&=&\alpha ^{3}\beta ^{4}\psi ^{2}\omega \left( z_{1}\right) \cdot \left\{
\left[ \alpha ^{4}\beta ^{3}\psi \omega (y)\cdot \alpha ^{3}\beta ^{2}\psi
\omega \left( z_{2_{1}}\right) \right] \cdot \left[ \alpha ^{5}\beta
^{2}\psi \omega (x)\cdot \alpha ^{4}\beta \omega ^{2}\left( z_{2_{2}}\right) %
\right] \right\} \\
&=&\alpha ^{3}\beta ^{4}\psi ^{2}\omega \left( z_{1}\right) \cdot \left[
\alpha ^{5}\beta ^{3}\psi \omega (y)\cdot \left\{ \alpha ^{3}\beta ^{2}\psi
\omega \left( z_{2_{1}}\right) \cdot \left[ \alpha ^{5}\beta \psi \omega
(x)\cdot \alpha ^{4}\omega ^{2}\left( z_{2_{2}}\right) \right] \right\} %
\right] \\
&=&\alpha ^{3}\beta ^{4}\psi ^{2}\left( z_{1_{1}}\right) \cdot \left[ \alpha
^{5}\beta ^{3}\psi \omega (y)\cdot \left\{ \alpha ^{3}\beta ^{2}\psi \omega
\left( z_{1_{2}}\right) \cdot \left[ \alpha ^{5}\beta \psi \omega (x)\cdot
\alpha ^{4}\psi \omega ^{2}\left( z_{2}\right) \right] \right\} \right],
\end{eqnarray*}
so that%
\begin{gather*}
\left\{ \alpha ^{2}\beta ^{4}\psi ^{2}\omega \left( z_{1}\right) \cdot \left[
\alpha ^{4}\beta ^{3}\psi \omega (y)\cdot \alpha ^{3}\beta ^{2}\psi \omega
\left( z_{2_{1}}\right) \right] \right\} \cdot \left[ \alpha ^{5}\beta
^{3}\psi \omega (x)\cdot \alpha ^{4}\beta ^{2}\omega ^{2}\left(
z_{2_{2}}\right) \right] \\
\;\;\;\;\;\;\;\;\;\;+\alpha ^{3}\beta ^{4}\psi ^{2}\left( z_{1_{1}}\right) \cdot \left[ \alpha
^{5}\beta ^{3}\psi \omega (x)\cdot \left\{ \alpha ^{3}\beta ^{2}\psi \omega
\left( z_{1_{2}}\right) \cdot \left[ \alpha ^{5}\beta \psi \omega (y)\cdot
\alpha ^{4}\psi \omega ^{2}\left( z_{2}\right) \right] \right\} \right] \\
=\alpha ^{3}\beta ^{4}\psi ^{2}\left( z_{1_{1}}\right) \cdot \left[ \alpha
^{5}\beta ^{3}\psi \omega (y)\cdot \left\{ \alpha ^{3}\beta ^{2}\psi \omega
\left( z_{1_{2}}\right) \cdot \left[ \alpha ^{5}\beta \psi \omega (x)\cdot
\alpha ^{4}\psi \omega ^{2}\left( z_{2}\right) \right] \right\} \right] \\
\;\;\;\;\;\;\;\;\;\;+\alpha ^{3}\beta ^{4}\psi ^{2}\left( z_{1_{1}}\right) \cdot \left[ \alpha
^{5}\beta ^{3}\psi \omega (x)\cdot \left\{ \alpha ^{3}\beta ^{2}\psi \omega
\left( z_{1_{2}}\right) \cdot \left[ \alpha ^{5}\beta \psi \omega (y)\cdot
\alpha ^{4}\psi \omega ^{2}\left( z_{2}\right) \right] \right\} \right],
\end{gather*}
which is obviously symmetric in $x$ and $y$.
\end{proof}
\section{Quasitriangular infinitesimal BiHom-bialgebras}\label{sec5}
\setcounter{equation}{0} 
We begin this section with a result of independent interest. 
\begin{proposition}
Let $(A,\mu ,\alpha ,\beta )$ be a BiHom-associative algebra 
and let $n\geq 2$ be a natural number. Consider the following left and right actions of $A$ on $A^{\otimes n}$, for 
all $a, b_1, b_2, \cdots , b_n\in A$:
\begin{eqnarray*}
&&a\bullet (b_1\otimes b_2\otimes \cdots \otimes b_n)=\alpha (a)\cdot b_1\otimes \beta (b_2)\otimes 
\cdots \otimes \beta (b_n), \\
&&(b_1\otimes b_2\otimes \cdots \otimes b_n)\bullet a=\alpha (b_1)\otimes \cdots \otimes \alpha (b_{n-1})\otimes 
b_n\cdot \beta (a).
\end{eqnarray*}
Then with these actions $(A^{\otimes n}, \alpha ^{\otimes n}, \beta ^{\otimes n})$ is an $A$-bimodule.
\end{proposition}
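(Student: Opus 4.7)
The plan is to verify the three bimodule axioms by direct computation on a pure tensor $X=b_1\otimes b_2\otimes \cdots \otimes b_n$, observing that in each axiom the only ``non-trivial'' factor is the one where a genuine multiplication in $A$ takes place (the first slot for the left action and the last slot for the right action); everything else is moved around by the structure maps $\alpha$ and $\beta$, so the BiHom-associativity condition (\ref{BHassoc}) need only be checked at that single slot.

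First, I would record the multiplicativity of $\alpha^{\otimes n}$ and $\beta^{\otimes n}$ with respect to $\bullet$, which is immediate from the hypothesis that $\alpha$ and $\beta$ are multiplicative on $A$ and commute with each other. Next, for the left-module axiom I would compute, on the one hand,
\begin{eqnarray*}
\alpha(a)\bullet (a'\bullet X) &=& \alpha^2(a)\cdot (\alpha(a')\cdot b_1)\otimes \beta^2(b_2)\otimes \cdots \otimes \beta^2(b_n),
\end{eqnarray*}
and on the other,
\begin{eqnarray*}
(a\cdot a')\bullet \beta^{\otimes n}(X) &=& (\alpha(a)\cdot \alpha(a'))\cdot \beta(b_1)\otimes \beta^2(b_2)\otimes \cdots \otimes \beta^2(b_n),
\end{eqnarray*}
and equate them using (\ref{BHassoc}) applied to $\alpha(a), \alpha(a'), b_1$ in the first tensor factor. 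The right-module axiom is proved by the symmetric computation at the last tensor factor, applying (\ref{BHassoc}) to $b_n$, $\alpha(a)$, $a'$ there while the first $n-1$ factors are simply shuffled by powers of $\alpha$ and $\beta$.

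Finally, for the bimodule compatibility $\alpha(a)\bullet (X\bullet a')=(a\bullet X)\bullet \beta(a')$, I would expand both sides and compare slot by slot. The first slot gives $\alpha^2(a)\cdot \alpha(b_1)$ on both sides, using the multiplicativity of $\alpha$ on the right-hand side. The last slot gives $\beta(b_n)\cdot \beta^2(a')$ on both sides, using the multiplicativity of $\beta$ on the left-hand side. The intermediate slots $2,\ldots,n-1$ produce $\alpha\beta(b_i)=\beta\alpha(b_i)$ on both sides, which agree because $\alpha\circ \beta=\beta\circ \alpha$.

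No serious obstacle is expected; the only points demanding care are (i) correctly tracking the ``where does a real product happen'' slot in each axiom so that BiHom-associativity (\ref{BHassoc}) is invoked in the right place, and (ii) keeping the exponents of $\alpha$ and $\beta$ bookkept in the passive slots, which is routine given $\alpha\beta=\beta\alpha$ and the multiplicativity of $\alpha$ and $\beta$.
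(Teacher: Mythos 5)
Your verification is correct and is exactly the direct slot-by-slot computation that the paper leaves to the reader (its entire proof reads ``A straightforward computation left to the reader''). One small slip in the sketch: for the right-module axiom the triple to which (\ref{BHassoc}) must be applied in the last tensor factor is $\left(b_n,\beta(a),\beta(a')\right)$ rather than $\left(b_n,\alpha(a),a'\right)$, giving $\alpha(b_n)\cdot\left(\beta(a)\cdot\beta(a')\right)=\left(b_n\cdot\beta(a)\right)\cdot\beta^{2}(a')$; with that correction everything checks out.
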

\begin{proof}
A straightforward computation left to the reader.
\end{proof}

We will be interested in the case $n=3$, so we have the following actions of $A$ on $A\otimes A\otimes A$:
\begin{eqnarray}
&&a\bullet (x\otimes y\otimes z)=\alpha (a)\cdot x\otimes \beta (y)\otimes \beta (z), \label{action1}\\
&&(x\otimes y\otimes z)\bullet a=\alpha (x)\otimes \alpha (y)\otimes 
z\cdot \beta (a).\label{action2}
\end{eqnarray}
\begin{proposition}
Let $(A,\mu ,\alpha ,\beta )$ be a BiHom-associative algebra and let $r\in A\otimes A$ be an element, 
with notation $r=\sum _ix_i\otimes y_i$, such that 
$(\alpha \otimes \alpha )(r)=r=(\beta \otimes \beta )(r)$. Define the linear map
\begin{eqnarray*}
&&\Delta _r:A\rightarrow A\otimes A, \;\;\;\Delta _r(a)=\sum _i\alpha (x_i)\otimes y_i\cdot a-\sum _ia\cdot x_i
\otimes \beta (y_i), \;\;\;\;\;\forall \;\;a\in A. 
\end{eqnarray*}
Then we have $(\alpha \otimes \alpha)\circ \Delta _r=\Delta _r\circ \alpha$, $(\beta \otimes
\beta)\circ \Delta _r=\Delta _r\circ \beta $ and, 
if we denote as usual $\Delta _r(a)=a_1\otimes a_2$, for $a\in A$, the following identity holds:
\begin{eqnarray*}
&&\Delta _r(a\cdot b)=\alpha (a)\cdot b_{1}\otimes \beta (b_{2})+\alpha (a_{1})\otimes
a_{2}\cdot \beta (b), \;\;\;\;\;\forall \;\;a, b\in A.
\end{eqnarray*}
\end{proposition}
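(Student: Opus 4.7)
The plan is to prove the three assertions in order, with the last one being the only nontrivial computation. The compatibilities $(\alpha\otimes\alpha)\circ\Delta_r=\Delta_r\circ\alpha$ and $(\beta\otimes\beta)\circ\Delta_r=\Delta_r\circ\beta$ follow by direct substitution: applying $\alpha\otimes\alpha$ to $\sum_i\alpha(x_i)\otimes y_i\cdot a-\sum_i a\cdot x_i\otimes\beta(y_i)$, distributing $\alpha$ through products via multiplicativity, and then using $(\alpha\otimes\alpha)(r)=r$ (together with $\alpha\circ\beta=\beta\circ\alpha$) gives the right-hand side $\Delta_r(\alpha(a))$; the argument for $\beta$ is the same.

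For the derivation-like identity, the strategy is to expand the right-hand side using the definition of $\Delta_r$ and watch four summands reduce to two. Writing $a_1\otimes a_2=\Delta_r(a)$ and $b_1\otimes b_2=\Delta_r(b)$, each of the two terms $\alpha(a)\cdot b_1\otimes\beta(b_2)$ and $\alpha(a_1)\otimes a_2\cdot\beta(b)$ splits into a difference of two summands indexed by $i$. The two key simplifications are: first, apply BiHom-associativity (\ref{BHassoc}) to rewrite $\alpha(a)\cdot(b\cdot x_i)=(a\cdot b)\cdot\beta(x_i)$ inside one summand of $\alpha(a)\cdot b_1\otimes\beta(b_2)$, and $(y_i\cdot a)\cdot\beta(b)=\alpha(y_i)\cdot(a\cdot b)$ inside one summand of $\alpha(a_1)\otimes a_2\cdot\beta(b)$; second, use the invariance hypotheses to remove the extra twist that these rewrites introduce, namely $\sum_i(a\cdot b)\cdot\beta(x_i)\otimes\beta^2(y_i)=\sum_i(a\cdot b)\cdot x_i\otimes\beta(y_i)$ by $(\beta\otimes\beta)(r)=r$, and $\sum_i\alpha^2(x_i)\otimes\alpha(y_i)\cdot(a\cdot b)=\sum_i\alpha(x_i)\otimes y_i\cdot(a\cdot b)$ by $(\alpha\otimes\alpha)(r)=r$.

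After these rewrites, two of the four summands take the matched form $\pm\sum_i\alpha(a\cdot x_i)\otimes\beta(y_i)\cdot\beta(b)$ and cancel. The remaining two summands are precisely $-\sum_i(a\cdot b)\cdot x_i\otimes\beta(y_i)+\sum_i\alpha(x_i)\otimes y_i\cdot(a\cdot b)=\Delta_r(a\cdot b)$, completing the verification.

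I expect no conceptual obstacle: the main difficulty is bookkeeping, since the $\alpha$- and $\beta$-twists proliferate and BiHom-associativity must be invoked exactly once in each of the two "problematic" summands, and each invariance of $r$ exactly once thereafter. The trickiest point is resisting the temptation to cancel $\alpha$ or $\beta$ directly (they need not be invertible) and instead always converting an iterated twist of $r$ back to $r$ via $(\alpha\otimes\alpha)(r)=r$ or $(\beta\otimes\beta)(r)=r$ applied to the appropriate endomorphism of $A\otimes A$.
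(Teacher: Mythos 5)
Your proposal is correct and follows essentially the same route as the paper: expand the right-hand side into four summands, cancel the two matched ones via multiplicativity of $\alpha$ and $\beta$, rewrite the other two with BiHom-associativity, and absorb the resulting extra twists using $(\alpha\otimes\alpha)(r)=r=(\beta\otimes\beta)(r)$. The bookkeeping, including the caution against cancelling $\alpha$ or $\beta$ directly, matches the paper's computation exactly.
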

\begin{proof} The fact that $(\alpha \otimes \alpha)\circ \Delta _r=\Delta _r\circ \alpha$ and $(\beta \otimes
\beta)\circ \Delta _r=\Delta _r\circ \beta $ follows immediately from the condition 
$(\alpha \otimes \alpha )(r)=r=(\beta \otimes \beta )(r)$. Now 
we compute:\\[2mm]
${\;\;\;\;\;}$
$\alpha (a)\cdot b_{1}\otimes \beta (b_{2})+\alpha (a_{1})\otimes
a_{2}\cdot \beta (b)$
\begin{eqnarray*}
&=&\sum _i \alpha (a)\cdot \alpha (x_i)\otimes \beta (y_i\cdot b)-\sum _i \alpha (a)\cdot (b\cdot x_i)\otimes \beta (\beta (y_i)) \\
&&+\sum _i \alpha (\alpha (x_i))\otimes (y_i\cdot a)\cdot \beta (b)-\sum _i \alpha (a\cdot x_i)\otimes \beta (y_i)\cdot \beta (b)\\
&=&\sum \alpha ^2(x_i)\otimes \alpha (y_i)\cdot (a\cdot b)-\sum _i (a\cdot b)\cdot \beta (x_i)\otimes \beta ^2(y_i)\\
&=&\sum \alpha (x_i)\otimes y_i\cdot (a\cdot b)-\sum _i (a\cdot b)\cdot x_i\otimes \beta (y_i), 
\end{eqnarray*}
where for the last equality we used the fact that $(\alpha \otimes \alpha )(r)=r=(\beta \otimes \beta )(r)$. The expression 
we obtained is exactly $\Delta _r(a\cdot b)$. 
\end{proof}

We recall the following notation introduced in \cite{lmmp3}:
\begin{definition}
Let $(A,\mu ,\alpha ,\beta )$ be a BiHom-associative algebra and let $r=\sum _ix_i\otimes y_i\in A\otimes A$ be such that 
$(\alpha \otimes \alpha )(r)=r=(\beta \otimes \beta )(r)$. We define the following elements in $A\otimes A\otimes A$:
\begin{eqnarray*}
&&r_{12}r_{23}=\sum _{i, j}\alpha (x_i)\otimes y_i\cdot x_j\otimes \beta (y_j), \\
&&r_{13}r_{12}=\sum _{i, j}x_i\cdot x_j\otimes \beta (y_j)\otimes \beta (y_i), \\
&&r_{23}r_{13}=\sum _{i, j}\alpha (x_i)\otimes \alpha (x_j)\otimes y_j\cdot y_i, \\ 
&&A(r)=r_{13}r_{12}-
r_{12}r_{23}+r_{23}r_{13}.
\end{eqnarray*}
\end{definition}
\begin{proposition}
Let $(A,\mu ,\alpha ,\beta )$ be a BiHom-associative algebra and let 
$r=\sum _ix_i\otimes y_i\in A\otimes A$ be such that $(\alpha \otimes \alpha )(r)=r=(\beta \otimes \beta )(r)$. 
Then we have that $(\Delta _r \otimes \beta )\circ \Delta _r=
(\alpha \otimes \Delta _r)\circ \Delta _r$ if and only if $a\bullet A(r)=A(r)\bullet a$ for all $a\in A$, where the actions $\bullet $ 
are defined by (\ref{action1}) and (\ref{action2}). 
\end{proposition}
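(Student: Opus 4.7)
The plan is to expand both compositions $(\Delta_r\otimes\beta)\circ\Delta_r(a)$ and $(\alpha\otimes\Delta_r)\circ\Delta_r(a)$ using the two-term formula for $\Delta_r$, which yields four double-indexed summands on each side (eight terms in total). I would then match each term against a piece of $a\bullet A(r)$ or $A(r)\bullet a$ and argue that the two remaining ``mixed'' terms (those with $a$ sandwiched in the middle factor) cancel each other. The tools for all these manipulations are just two: BiHom-associativity $\alpha(u)\cdot(v\cdot w)=(u\cdot v)\cdot\beta(w)$ and the stability identities, which, since $(\alpha\otimes\alpha)(r)=(\beta\otimes\beta)(r)=r$, give the substitution rule
\[
\sum_i f(x_i)\otimes g(y_i)=\sum_i f\alpha(x_i)\otimes g\alpha(y_i)=\sum_i f\beta(x_i)\otimes g\beta(y_i)
\]
for any linear maps $f,g:A\to A$; in practice, this lets me shift a factor of $\alpha$ or $\beta$ from one leg of $r$ onto the other inside any summation.

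More concretely, after expansion one finds, among the four terms of $(\Delta_r\otimes\beta)\circ\Delta_r(a)$: a term containing $y_j\cdot\alpha(x_i)$ in the middle slot that, after one use of the stability identity and a reindexing, matches $r_{12}r_{23}\bullet a$; a term with $\alpha(x_i)\cdot x_j$ in the first slot that becomes $-r_{13}r_{12}\bullet a$; a term with $(a\cdot x_i)\cdot x_j$ in the first slot that, after applying BiHom-associativity once, becomes $+a\bullet r_{13}r_{12}$; and a mixed term $-\sum_{i,j}\alpha(x_j)\otimes y_j\cdot(a\cdot x_i)\otimes\beta^2(y_i)$. Similarly the four terms of $(\alpha\otimes\Delta_r)\circ\Delta_r(a)$ give, after analogous rearrangements: $r_{23}r_{13}\bullet a$, $-a\bullet r_{23}r_{13}$, $+a\bullet r_{12}r_{23}$, and the mixed term $-\sum_{i,j}\alpha^2(x_i)\otimes(y_i\cdot a)\cdot x_j\otimes\beta(y_j)$. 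Taking the difference of the two expansions, all ``clean'' terms assemble exactly into $a\bullet A(r)-A(r)\bullet a$.

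The main obstacle (and the only real content) is showing that the two mixed terms cancel. For this I would start from the first mixed term, swap the names $i\leftrightarrow j$, push one $\alpha$ from the first leg of $r$ onto the second leg via the stability identity to produce $\alpha(y_i)\cdot(a\cdot x_j)$ in the middle slot, apply BiHom-associativity to rewrite this as $(y_i\cdot a)\cdot\beta(x_j)$, and finally push the resulting $\beta$ from the second leg of $r$ back onto the third factor via the stability identity a second time. This produces exactly the second mixed term, with opposite sign.

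Putting everything together, I would conclude that
\[
\bigl[(\Delta_r\otimes\beta)\circ\Delta_r-(\alpha\otimes\Delta_r)\circ\Delta_r\bigr](a)=a\bullet A(r)-A(r)\bullet a
\]
for every $a\in A$, which immediately gives the desired equivalence. The chief difficulty is bookkeeping: keeping track of which copy of $\alpha$ or $\beta$ lives on which tensor factor and applying each of the two identities in the correct order, but no step is conceptually harder than a single instance of BiHom-associativity combined with one instance of the stability of $r$.
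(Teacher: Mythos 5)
Your proposal is correct and follows essentially the same route as the paper: expand both compositions of $\Delta_r$ into four double-indexed terms each, use the stability of $r$ under $\alpha\otimes\alpha$ and $\beta\otimes\beta$ together with BiHom-associativity to identify six of the eight terms with the pieces of $a\bullet A(r)$ and $A(r)\bullet a$, and observe that the two remaining mixed terms coincide (the paper's versions of them are literally equal after the relabelling $i\leftrightarrow j$, which is your cancellation argument). Your packaging of the conclusion as the single identity $\bigl[(\Delta_r\otimes\beta)\circ\Delta_r-(\alpha\otimes\Delta_r)\circ\Delta_r\bigr](a)=a\bullet A(r)-A(r)\bullet a$ is a slightly cleaner statement of what the paper establishes, but the computation is the same.
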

\begin{proof}
We compute, for $a\in A$:\\[2mm]
${\;\;\;\;\;}$$(\Delta _r \otimes \beta )(\Delta _r(a))$
\begin{eqnarray*}
&=&(\Delta _r \otimes \beta )(\sum _i\alpha (x_i)\otimes y_i\cdot a-\sum _ia\cdot x_i
\otimes \beta (y_i))\\
&=&\sum _i\Delta _r(\alpha (x_i))\otimes \beta (y_i)\cdot \beta (a)-\sum _i\Delta _r(a\cdot x_i)\otimes \beta ^2(y_i)\\
&=&\sum _{i, j}\alpha (x_j)\otimes y_j\cdot \alpha (x_i)\otimes \beta (y_i)\cdot \beta (a)-
\sum _{i, j}\alpha (x_i)\cdot x_j\otimes \beta (y_j)\otimes \beta (y_i)\cdot \beta (a)\\
&&-\sum _{i, j}\alpha (x_j)\otimes y_j\cdot (a\cdot x_i)\otimes \beta ^2(y_i)+
\sum _{i, j}(a\cdot x_i)\cdot x_j\otimes \beta (y_j)\otimes \beta ^2(y_i)\\
&\overset{(\beta \otimes \beta )(r)=r}{=}&\sum _{i, j}\alpha (x_j)\otimes y_j\cdot \alpha (x_i)\otimes \beta (y_i)\cdot \beta (a)-
\sum _{i, j}\alpha (x_i)\cdot x_j\otimes \beta (y_j)\otimes \beta (y_i)\cdot \beta (a)\\
&&-\sum _{i, j}\alpha (x_j)\otimes y_j\cdot (a\cdot x_i)\otimes \beta ^2(y_i)+
\sum _{i, j}(a\cdot x_i)\cdot \beta (x_j)\otimes \beta ^2(y_j)\otimes \beta ^2(y_i)\\
&\overset{(\alpha \otimes \alpha )(r)=r}{=}&\sum _{i, j}\alpha (x_j)\otimes y_j\cdot \alpha (x_i)\otimes \beta (y_i)\cdot \beta (a)-
\sum _{i, j}\alpha (x_i)\cdot x_j\otimes \beta (y_j)\otimes \beta (y_i)\cdot \beta (a)\\
&&-\sum _{i, j}\alpha ^2(x_j)\otimes \alpha (y_j)\cdot (a\cdot x_i)\otimes \beta ^2(y_i)+
\sum _{i, j}(a\cdot x_i)\cdot \beta (x_j)\otimes \beta ^2(y_j)\otimes \beta ^2(y_i)\\
&=&\sum _{i, j}\alpha (x_j)\otimes y_j\cdot \alpha (x_i)\otimes \beta (y_i)\cdot \beta (a)-
\sum _{i, j}\alpha (x_i)\cdot x_j\otimes \beta (y_j)\otimes \beta (y_i)\cdot \beta (a)\\
&&-\sum _{i, j}\alpha ^2(x_j)\otimes (y_j\cdot a)\cdot \beta (x_i)\otimes \beta ^2(y_i)+
\sum _{i, j}\alpha (a)\cdot (x_i\cdot x_j)\otimes \beta ^2(y_j)\otimes \beta ^2(y_i),
\end{eqnarray*}
where for the last equality we applied two times the BiHom-associativity condition, and \\[2mm]
${\;\;\;\;\;}$$(\alpha \otimes \Delta _r)(\Delta _r(a))$
\begin{eqnarray*}
&=&(\alpha \otimes \Delta _r)(\sum _i\alpha (x_i)\otimes y_i\cdot a-\sum _ia\cdot x_i
\otimes \beta (y_i))\\
&=&\sum _i\alpha ^2(x_i)\otimes \Delta _r(y_i\cdot a)-\sum _i\alpha (a)\cdot \alpha (x_i)\otimes 
\Delta _r(\beta (y_i))\\
&=&\sum _{i, j}\alpha ^2(x_i)\otimes \alpha (x_j)\otimes y_j\cdot (y_i\cdot a)-
\sum _{i, j}\alpha ^2(x_i)\otimes (y_i\cdot a)\cdot x_j\otimes \beta (y_j)\\
&&-\sum _{i, j}\alpha (a)\cdot \alpha (x_i)\otimes \alpha (x_j)\otimes y_j\cdot \beta (y_i)+
\sum _{i, j}\alpha (a)\cdot \alpha (x_i)\otimes \beta (y_i)\cdot x_j\otimes \beta (y_j)\\
&\overset{(\alpha \otimes \alpha )(r)=r}{=}&\sum _{i, j}\alpha ^2(x_i)\otimes \alpha ^2(x_j)
\otimes \alpha (y_j)\cdot (y_i\cdot a)-
\sum _{i, j}\alpha ^2(x_i)\otimes (y_i\cdot a)\cdot x_j\otimes \beta (y_j)\\
&&-\sum _{i, j}\alpha (a)\cdot \alpha (x_i)\otimes \alpha (x_j)\otimes y_j\cdot \beta (y_i)+
\sum _{i, j}\alpha (a)\cdot \alpha (x_i)\otimes \beta (y_i)\cdot x_j\otimes \beta (y_j)\\
&\overset{(\beta \otimes \beta )(r)=r}{=}&\sum _{i, j}\alpha ^2(x_i)\otimes \alpha ^2(x_j)
\otimes \alpha (y_j)\cdot (y_i\cdot a)-
\sum _{i, j}\alpha ^2(x_i)\otimes (y_i\cdot a)\cdot \beta (x_j)\otimes \beta ^2(y_j)\\
&&-\sum _{i, j}\alpha (a)\cdot \alpha (x_i)\otimes \alpha (x_j)\otimes y_j\cdot \beta (y_i)+
\sum _{i, j}\alpha (a)\cdot \alpha (x_i)\otimes \beta (y_i)\cdot x_j\otimes \beta (y_j). 
\end{eqnarray*}
So, by using these formulae, we have $(\Delta _r \otimes \beta )(\Delta _r(a))=(\alpha \otimes \Delta _r)(\Delta _r(a))$ 
if and only if 
\begin{eqnarray*}
&&\sum _{i, j}\alpha (x_j)\otimes y_j\cdot \alpha (x_i)\otimes \beta (y_i)\cdot \beta (a)-
\sum _{i, j}\alpha (x_i)\cdot x_j\otimes \beta (y_j)\otimes \beta (y_i)\cdot \beta (a)\\
&&\;\;\;\;\;\;\;\;+\sum _{i, j}\alpha (a)\cdot (x_i\cdot x_j)\otimes \beta ^2(y_j)\otimes \beta ^2(y_i)\\
&&=\sum _{i, j}\alpha ^2(x_i)\otimes \alpha ^2(x_j)
\otimes \alpha (y_j)\cdot (y_i\cdot a)
-\sum _{i, j}\alpha (a)\cdot \alpha (x_i)\otimes \alpha (x_j)\otimes y_j\cdot \beta (y_i)\\
&&+
\sum _{i, j}\alpha (a)\cdot \alpha (x_i)\otimes \beta (y_i)\cdot x_j\otimes \beta (y_j), 
\end{eqnarray*}
which, by using one more time the BiHom-associativity condition, the fact that  $(\alpha \otimes \alpha )(r)=(\beta \otimes \beta )(r)=r$ 
and separating the terms, may be rewritten as 
\begin{eqnarray*}
&&\sum _{i, j}\alpha (a)\cdot (x_i\cdot x_j)\otimes \beta ^2(y_j)\otimes \beta ^2(y_i)
+\sum _{i, j}\alpha (a)\cdot \alpha (x_i)\otimes \alpha \beta (x_j)\otimes \beta (y_j\cdot y_i)\\
&&\;\;\;\;\;-\sum _{i, j}\alpha (a)\cdot \alpha (x_i)\otimes \beta (y_i\cdot x_j)\otimes \beta ^2(y_j)\\
&=&\sum _{i, j}\alpha ^2(x_i)\otimes \alpha ^2(x_j)\otimes (y_j\cdot y_i)\cdot \beta (a)+
\sum _{i, j}\alpha (x_i\cdot x_j)\otimes \alpha \beta (y_j)\otimes \beta (y_i)\cdot \beta (a)\\
&&\;\;\;\;\;-\sum _{i, j}\alpha ^2(x_j)\otimes \alpha (y_j\cdot x_i)\otimes \beta (y_i)\cdot \beta (a),
\end{eqnarray*}
and this is exactly the condition $a\bullet A(r)=A(r)\bullet a$. 
\end{proof}

As a consequence of the previous results, we obtain:
\begin{proposition}\label{propcobound}
Let $(A,\mu ,\alpha ,\beta )$ be a BiHom-associative algebra 
and let $r=\sum _ix_i\otimes y_i\in A\otimes A$ be such that $(\alpha \otimes \alpha )(r)=r=(\beta \otimes \beta )(r)$ 
and $a\bullet A(r)=A(r)\bullet a$, for all $a\in A$. Then, if we define the linear map 
\begin{eqnarray*}
&&\Delta _r:A\rightarrow A\otimes A, \;\;\;\Delta _r(a)=\sum _i\alpha (x_i)\otimes y_i\cdot a-\sum _ia\cdot x_i
\otimes \beta (y_i), \;\;\;\;\;\forall \;\;a\in A,  
\end{eqnarray*}
then $(A, \mu , \Delta _r, \alpha , \beta , \psi =\beta , \omega =\alpha )$ is an infinitesimal BiHom-bialgebra. 
\end{proposition}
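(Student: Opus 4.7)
The plan is to assemble the statement from the three preceding propositions together with the immediate observations that follow from the choices $\psi=\beta$ and $\omega=\alpha$. I will check each axiom of an infinitesimal BiHom-bialgebra in turn.

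First, I would verify the ``compatibility between structure maps'' axioms (\ref{infinb}) and (\ref{infinbia}). Since $\psi=\beta$ and $\omega=\alpha$, condition (\ref{infinb}) reduces to $\alpha\circ\beta=\beta\circ\alpha$ (twice) and the trivial identities $\alpha\circ\alpha=\alpha\circ\alpha$, $\beta\circ\beta=\beta\circ\beta$, all of which are immediate from the assumption that $\alpha$ and $\beta$ commute in the BiHom-associative algebra $(A,\mu,\alpha,\beta)$. Likewise (\ref{infinbia}) becomes $\beta(a\cdot b)=\beta(a)\cdot\beta(b)$ and $\alpha(a\cdot b)=\alpha(a)\cdot\alpha(b)$, which hold because $\alpha,\beta$ are multiplicative.

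Next, the previous proposition establishing properties of $\Delta_r$ directly gives $(\alpha\otimes\alpha)\circ\Delta_r=\Delta_r\circ\alpha$ and $(\beta\otimes\beta)\circ\Delta_r=\Delta_r\circ\beta$, which is condition (\ref{infinbi}); the same identities, read with $\psi=\beta$ and $\omega=\alpha$, are exactly the compatibility of $\Delta_r$ with the structure maps of the coalgebra. The infinitesimal compatibility (\ref{infin}), written out via (\ref{infinelements}) for $\omega=\alpha$ and $\psi=\beta$, becomes
\[
\Delta_r(a\cdot b)=\alpha(a)\cdot b_1\otimes\beta(b_2)+\alpha(a_1)\otimes a_2\cdot\beta(b),
\]
which is precisely the conclusion of the same proposition. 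Finally, the BiHom-coassociativity (\ref{BHcoassoc}), namely $(\Delta_r\otimes\beta)\circ\Delta_r=(\alpha\otimes\Delta_r)\circ\Delta_r$, is exactly the conclusion of the subsequent proposition under the hypothesis $a\bullet A(r)=A(r)\bullet a$ for all $a\in A$, which is part of our assumption.

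Since every axiom has been verified, no serious computation remains: the proof is just a bookkeeping of which earlier statement supplies which axiom. The only potential pitfall is to make sure the ordering of the structure maps is consistent (in particular that ``$\psi=\beta$, $\omega=\alpha$'' and not the reverse is the correct assignment), and this is forced by comparing the right-hand side of (\ref{infinelements}) with the explicit formula for $\Delta_r(a\cdot b)$ derived in the preceding proposition; with that identification all remaining axioms fall into place.
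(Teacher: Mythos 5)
Your proposal is correct and matches the paper exactly: the paper states this proposition with no written proof beyond the phrase ``As a consequence of the previous results,'' and your bookkeeping — reading off (\ref{infinb}), (\ref{infinbia}) from the commutativity and multiplicativity of $\alpha,\beta$, condition (\ref{infin}) and (\ref{infinbi}) from the proposition computing $\Delta_r(a\cdot b)$, and BiHom-coassociativity from the equivalence with $a\bullet A(r)=A(r)\bullet a$ — is precisely the intended assembly, including the correct identification $\psi=\beta$, $\omega=\alpha$ forced by comparing (\ref{infinelements}) with the derived formula.
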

\begin{definition}
An infinitesimal BiHom-bialgebra as in Proposition \ref{propcobound} is called a coboundary infinitesimal BiHom-bialgebra. 
\end{definition}
\begin{definition}
A coboundary infinitesimal BiHom-bialgebra for which $A(r)=0$ is called a quasitriangular infinitesimal BiHom-bialgebra. 
\end{definition}

These concepts extend Aguiar's classical concepts of coboundary and quasitriangular infinitesimal bialgebras (see 
\cite{aguiarcontemp}, \cite{aguiarjalgebra}), as well as their Hom-versions introduced by Yau in \cite{yauinf}. 

We recall from \cite{lmmp3} that 
the equation $A(r)=0$, i.e. 
\begin{eqnarray}
&&r_{13}r_{12}-
r_{12}r_{23}+r_{23}r_{13}=0, \label{BHAYBE1}
\end{eqnarray}
or, more concretely, 
\begin{eqnarray}
&&\sum _{i, j}\alpha (x_i)\otimes y_i\cdot x_j\otimes \beta (y_j)=\sum _{i, j}\{x_i\cdot x_j\otimes \beta (y_j)\otimes \beta (y_i)+
\alpha (x_i)\otimes \alpha (x_j)\otimes y_j\cdot y_i\}, \label{BHAYBE2}
\end{eqnarray}
is called the associative BiHom-Yang-Baxter equation. 

We have the following characterization of quasitriangular infinitesimal BiHom-bialgebras (extending, up to a sign convention, 
the ones for quasitriangular infinitesimal bialgebras in \cite{aguiarcontemp} and for quasitriangular 
infinitesimal Hom-bialgebras in \cite{yauinf}). 
\begin{proposition}
Let $(A, \mu , \Delta _r, \alpha , \beta , \psi =\beta , \omega =\alpha )$ be a quasitriangular infinitesimal BiHom-bialgebra 
and denote $\Delta =\Delta _r$ ($r=\sum _ix_i\otimes y_i\in A\otimes A$). Then: \\
(i) $\Delta (a)=\sum _i\alpha (x_i)\otimes y_i\cdot a-\sum _ia\cdot x_i\otimes \beta (y_i)$, for all $a\in A$; \\
(ii) $(\Delta \otimes \beta )(r)=r_{23}r_{13}$; \\
(iii) $(\alpha \otimes \Delta )(r)=-r_{13}r_{12}$. 

Conversely, if an infinitesimal BiHom-bialgebra $(A, \mu , \Delta , \alpha , \beta , \psi =\beta , \omega =\alpha )$ 
satisfies the relations (i), (ii) and (iii) for some $r=\sum _ix_i\otimes y_i\in A\otimes A$ with 
$(\alpha \otimes \alpha )(r)=r=(\beta \otimes \beta )(r)$, 
then $\Delta =\Delta _r$ and $(A, \mu , \Delta _r, \alpha , \beta , \psi =\beta , \omega =\alpha )$ is a 
quasitriangular infinitesimal BiHom-bialgebra. 
\end{proposition}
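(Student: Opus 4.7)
The plan is to verify (i), (ii), (iii) by directly substituting the formula $\Delta_r(a)=\sum_i \alpha(x_i)\otimes y_i\cdot a-\sum_i a\cdot x_i\otimes \beta(y_i)$ into the left-hand sides and observing that the associative BiHom-Yang-Baxter equation $A(r)=0$ precisely rearranges to the claimed identities.

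For the direct implication, (i) is immediate since a quasitriangular infinitesimal BiHom-bialgebra is by definition coboundary, so $\Delta=\Delta_r$ as built in Proposition \ref{propcobound}. For (ii), I would compute $(\Delta\otimes\beta)(r)=\sum_i\Delta(x_i)\otimes \beta(y_i)$ by plugging (i) in slot $x_i$, reindexing the double sum and using $(\alpha\otimes\alpha)(r)=(\beta\otimes\beta)(r)=r$ to push all $\alpha$'s and $\beta$'s into canonical position. The result simplifies to exactly $r_{12}r_{23}-r_{13}r_{12}$, and then the assumption $A(r)=r_{13}r_{12}-r_{12}r_{23}+r_{23}r_{13}=0$ rearranges instantly to $r_{12}r_{23}-r_{13}r_{12}=r_{23}r_{13}$, giving (ii). For (iii) the computation is symmetric: $(\alpha\otimes\Delta)(r)=\sum_i \alpha(x_i)\otimes \Delta(y_i)$, after substituting (i) in the $y_i$ slot, becomes $r_{23}r_{13}-r_{12}r_{23}$, and $A(r)=0$ rewrites this as $-r_{13}r_{12}$.

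For the converse, (i) by itself gives $\Delta=\Delta_r$ on the nose. The infinitesimal BiHom-bialgebra hypothesis, together with the compatibility $(\alpha\otimes\alpha)(r)=r=(\beta\otimes\beta)(r)$, supplies everything needed to invoke Proposition \ref{propcobound} once we verify $A(r)=0$; for this I reuse the forward calculation. Substituting (i) into $(\Delta\otimes\beta)(r)$ gives $r_{12}r_{23}-r_{13}r_{12}$, and comparing with (ii) yields $r_{12}r_{23}-r_{13}r_{12}=r_{23}r_{13}$, i.e.\ $A(r)=0$. Hence $(A,\mu,\Delta_r,\alpha,\beta,\psi=\beta,\omega=\alpha)$ is quasitriangular. (Condition (iii) is then automatic and actually redundant for this direction, carrying the same content modulo $A(r)=0$.)

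The only real obstacle I anticipate is bookkeeping: tracking the summation indices $i,j$ across the three tensor slots, and making judicious use of $(\alpha\otimes\alpha)(r)=(\beta\otimes\beta)(r)=r$ to distribute or absorb the structure maps so that the expressions match the canonical forms of $r_{12}r_{23}$, $r_{13}r_{12}$, $r_{23}r_{13}$ in the definition. No additional structural input beyond BiHom-associativity and the $r$-invariance under $\alpha\otimes\alpha$ and $\beta\otimes\beta$ is needed; in particular, BiHom-coassociativity, which for quasitriangular bialgebras is itself a consequence of $A(r)=0$ via Proposition \ref{propcobound}, plays no explicit role in either direction of this characterization.
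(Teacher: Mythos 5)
Your proposal is correct and follows essentially the same route as the paper: substitute the formula for $\Delta_r$ into $(\Delta\otimes\beta)(r)$ and $(\alpha\otimes\Delta)(r)$, identify the resulting double sums as $r_{12}r_{23}-r_{13}r_{12}$ and $r_{23}r_{13}-r_{12}r_{23}$ respectively, and invoke $A(r)=0$; for the converse, reverse the same computation to recover $A(r)=0$ from (i) and (ii). Your observation that (iii) is redundant in the converse matches the paper's remark that (i) together with either (ii) or (iii) suffices.
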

\begin{proof}
We prove first the direct implication. (i) is just the definition of $\Delta _r$. We prove (ii): 
\begin{eqnarray*}
(\Delta \otimes \beta )(r)&=&\sum _i \Delta (x_i)\otimes \beta (y_i)\\
&=&\sum _{i, j} (\alpha (x_j)\otimes y_j\cdot x_i-x_i\cdot x_j\otimes \beta (y_j))\otimes \beta (y_i)\\
&=&\sum _{i, j}\alpha (x_j)\otimes y_j\cdot x_i\otimes \beta (y_i)-\sum _{i, j}x_i\cdot x_j\otimes \beta (y_j)\otimes \beta (y_i)\\
&=&r_{12}r_{23}-r_{13}r_{12}\\
&\overset{(\ref{BHAYBE1})}{=}&r_{23}r_{13}.
\end{eqnarray*}
The proof of (iii) is similar and left to the reader. 

For the converse, it is enough if we know (i) and (ii) (or (i) and (iii)), because (i) says anyway that $\Delta $ is of the form 
$\Delta _r$,with $r=\sum _ix_i\otimes y_i\in A\otimes A$, and the computation performed for (ii) up to the 
last step can be done also now, and we get  $(\Delta \otimes \beta )(r)=r_{12}r_{23}-r_{13}r_{12}$, and so (ii) implies 
$r_{12}r_{23}-r_{13}r_{12}=r_{23}r_{13}$, that is (\ref{BHAYBE1}) holds, i.e. 
$(A, \mu , \Delta _r, \alpha , \beta , \psi =\beta , \omega =\alpha )$ is a quasitriangular infinitesimal BiHom-bialgebra. 
\end{proof}

We recall some other facts from \cite{lmmp3}. 
\begin{definition} (\cite{lmmp3}) 
Let $(A,\mu ,\alpha ,\beta )$ be a BiHom-associative algebra 
and let $R:A\rightarrow A$ be a linear map commuting with $\alpha $ and $\beta $. 
We call $R$ an $\alpha \beta $-Rota-Baxter operator if 
\begin{eqnarray}
&&R(\alpha \beta (a))\cdot R(\alpha \beta (b))=R(\alpha \beta (a)\cdot R(b)+R(a)\cdot \alpha \beta (b)), \;\;\;\forall \;\;a, b\in A. \label{generRB}
\end{eqnarray}
\end{definition}
\begin{proposition}\label{alphabetaRB} (\cite{lmmp3})
Let $(A,\mu ,\alpha ,\beta )$ be a BiHom-associative algebra 
and let $R:A\rightarrow A$ be an $\alpha \beta $-Rota-Baxter operator. Let $\eta :A\rightarrow A$ be a linear map, 
commuting with $\alpha , \beta , R$ and having the property that $\eta (x\cdot y)=\eta (x)\cdot \eta (y)$, for all $x, y\in A$. 
Define new operations on $A$ by 
\begin{eqnarray*}
&&x\prec y=\alpha \beta (x)\cdot R\eta (y) \;\;\;and\;\;\; x\succ y=R(x)\cdot \alpha \beta \eta (y), 
\end{eqnarray*} 
for all $x, y\in A$. Then $(A, \prec , \succ , \alpha ^2\beta , \alpha \beta ^2\eta )$ is a BiHom-dendriform algebra. 
\end{proposition}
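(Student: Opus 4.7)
The plan is to verify each of the defining conditions \eqref{BiHomdend1}--\eqref{BiHomdend8} of a BiHom-dendriform algebra for $(A, \prec, \succ, \alpha^{2}\beta, \alpha\beta^{2}\eta)$.

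First I would dispatch the easy conditions: \eqref{BiHomdend1} follows at once from the pairwise commutativity of $\alpha$, $\beta$, $\eta$; and \eqref{BiHomdend3}, \eqref{BiHomdend5} (multiplicativity of the structure maps with respect to $\prec$ and $\succ$) reduce to a short check using the facts that $\alpha$, $\beta$, $\eta$ are multiplicative with respect to $\mu$ and commute with $R$, so that, for instance, $\alpha^{2}\beta(x\prec y) = \alpha^{2}\beta(\alpha\beta(x)\cdot R\eta(y)) = \alpha\beta(\alpha^{2}\beta(x))\cdot R\eta(\alpha^{2}\beta(y)) = \alpha^{2}\beta(x)\prec \alpha^{2}\beta(y)$, and analogously for $\alpha\beta^{2}\eta$ and for $\succ$.

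For the three substantial relations I would compute each side using only BiHom-associativity \eqref{BHassoc} and the Rota-Baxter identity \eqref{generRB}. Condition \eqref{BiHomdend7} is purely BiHom-associative: expanding both sides gives
\[
(x\succ y)\prec \alpha\beta^{2}\eta(z)=\bigl(\alpha\beta R(x)\cdot \alpha^{2}\beta^{2}\eta(y)\bigr)\cdot \alpha\beta^{2}R\eta^{2}(z),
\]
\[
\alpha^{2}\beta(x)\succ (y\prec z)=\alpha^{2}\beta R(x)\cdot \bigl(\alpha^{2}\beta^{2}\eta(y)\cdot \alpha\beta R\eta^{2}(z)\bigr),
\]
and the two are equal by \eqref{BHassoc} applied with $a=\alpha\beta R(x)$, $b=\alpha^{2}\beta^{2}\eta(y)$, $c=\alpha\beta R\eta^{2}(z)$.

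For \eqref{BiHomdend6} and \eqref{BiHomdend8}, the Rota-Baxter axiom is the key ingredient that allows $R$ to ``see through'' the sum of two terms. Specifically, applying \eqref{generRB} with $a=x$, $b=\eta(y)$ gives
\[
R\bigl(\alpha\beta(x)\cdot R\eta(y)+R(x)\cdot \alpha\beta\eta(y)\bigr)=\alpha\beta R(x)\cdot \alpha\beta R\eta(y).
\]
For \eqref{BiHomdend8} one plugs this into the right-hand side $(x\prec y+x\succ y)\succ \alpha\beta^{2}\eta(z)$ directly (having first moved $\alpha\beta\eta$ past the sum). For \eqref{BiHomdend6} one first uses the commutation $R\eta=\eta R$ and the multiplicativity of $\eta$ to push $\eta$ into the arguments, then applies the same form of \eqref{generRB} with $a=\eta(y)$, $b=\eta^{2}(z)$. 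In both cases a single application of \eqref{BHassoc} then identifies the resulting expressions with the corresponding left-hand sides.

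The principal obstacle is purely bookkeeping: keeping track of the many compositions $\alpha^{i}\beta^{j}\eta^{k}R^{\ell}$ that accumulate on each factor, and ensuring that the operators line up exactly in the pattern $\alpha(a)\cdot(b\cdot c)=(a\cdot b)\cdot\beta(c)$ demanded by BiHom-associativity. The conceptual content is entirely contained in the single use of the Rota-Baxter identity for each of \eqref{BiHomdend6} and \eqref{BiHomdend8}; everything else is verification.
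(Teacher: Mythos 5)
Your verification is correct: all the operator bookkeeping checks out (in particular, the single application of \eqref{generRB} with $a=x,\ b=\eta(y)$ for \eqref{BiHomdend8} and with $a=\eta(y),\ b=\eta^{2}(z)$ for \eqref{BiHomdend6}, followed by one instance of \eqref{BHassoc} in each case, does close the argument). The paper itself states this proposition without proof, citing \cite{lmmp3}; your direct axiom-by-axiom verification is the standard argument and matches what that reference does.
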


Consequently, by using Proposition \ref{BHdendpreLie}, we obtain: 
\begin{corollary} \label{corolalphabetaRB}
In the hypotheses of Proposition \ref{alphabetaRB} and assuming that $\alpha, \beta , \eta $ are bijective, 
if we define a new operation on $A$ by 
\begin{eqnarray*}
&&x\star y=x\succ y-(\alpha ^{-1}\beta \eta (y))\prec (\alpha \beta ^{-1}\eta ^{-1}(x))=
R(x)\cdot \alpha \beta \eta (y)-\beta ^2\eta (y)\cdot R\alpha \beta ^{-1}(x),  
\end{eqnarray*}
for all $x, y\in A$, then $(A, \star , \alpha ^2\beta , \alpha \beta ^2\eta )$  
is a left BiHom-pre-Lie algebra.
\end{corollary}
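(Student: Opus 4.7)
The plan is to deduce Corollary~\ref{corolalphabetaRB} as a direct application of Proposition~\ref{BHdendpreLie} to the BiHom-dendriform algebra supplied by Proposition~\ref{alphabetaRB}. The essential point is that the structure maps $\alpha^2\beta$ and $\alpha\beta^2\eta$ of that BiHom-dendriform algebra are bijective under the present hypotheses, and that the two forms of $\star $ in the statement match Proposition~\ref{BHdendpreLie} when its $\alpha, \beta$ are taken to be $\alpha^2\beta,\alpha\beta^2\eta$.

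First, I would invoke Proposition~\ref{alphabetaRB} to obtain that $(A,\prec,\succ,\alpha^2\beta,\alpha\beta^2\eta)$ is a BiHom-dendriform algebra. Since by assumption $\alpha,\beta,\eta$ are bijective and commute pairwise, both $\alpha^2\beta$ and $\alpha\beta^2\eta$ are bijective on $A$. Thus the hypotheses of Proposition~\ref{BHdendpreLie} hold for this BiHom-dendriform algebra, and that proposition yields that the operation
\begin{eqnarray*}
x\star y := x\succ y-\bigl((\alpha^2\beta)^{-1}(\alpha\beta^2\eta)(y)\bigr)\prec \bigl((\alpha^2\beta)(\alpha\beta^2\eta)^{-1}(x)\bigr)
\end{eqnarray*}
makes $(A,\star ,\alpha^2\beta,\alpha\beta^2\eta)$ into a left BiHom-pre-Lie algebra.

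It then remains to identify this formula with the two expressions in the statement. This is a routine composition of commuting bijections: one computes $(\alpha^2\beta)^{-1}(\alpha\beta^2\eta)=\alpha^{-1}\beta\eta$ and $(\alpha^2\beta)(\alpha\beta^2\eta)^{-1}=\alpha\beta^{-1}\eta^{-1}$, which gives the first displayed form of $x\star y$. To get the second form, I would substitute the definitions $x\succ y=R(x)\cdot \alpha\beta\eta(y)$ and $u\prec v=\alpha\beta(u)\cdot R\eta(v)$ from Proposition~\ref{alphabetaRB}: with $u=\alpha^{-1}\beta\eta(y)$ and $v=\alpha\beta^{-1}\eta^{-1}(x)$, one finds $\alpha\beta(u)=\beta^2\eta(y)$ and $R\eta(v)=R\alpha\beta^{-1}(x)$ (using that $R$ commutes with $\alpha,\beta,\eta$), yielding exactly $\beta^2\eta(y)\cdot R\alpha\beta^{-1}(x)$.

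I do not anticipate a real obstacle: the substantive work (verification of the BiHom-dendriform axioms and of the passage from BiHom-dendriform to left BiHom-pre-Lie) is already packaged in the two cited results, so the proof is a bookkeeping exercise in composing bijective commuting maps. The only care needed is to check that the two presentations of $x\star y$ agree, which is the computation sketched above.
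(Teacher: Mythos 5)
Your proposal is correct and is exactly the paper's route: the paper states the corollary as an immediate consequence of Proposition~\ref{BHdendpreLie} applied to the BiHom-dendriform algebra $(A,\prec,\succ,\alpha^2\beta,\alpha\beta^2\eta)$ from Proposition~\ref{alphabetaRB}, with the same identifications $(\alpha^2\beta)^{-1}(\alpha\beta^2\eta)=\alpha^{-1}\beta\eta$ and $(\alpha^2\beta)(\alpha\beta^2\eta)^{-1}=\alpha\beta^{-1}\eta^{-1}$ that you carry out. Your verification of the second displayed form of $x\star y$ is also the correct bookkeeping, so nothing is missing.
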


\begin{theorem}\label{ABRB} (\cite{lmmp3})
Let $(A,\mu ,\alpha ,\beta )$ be a BiHom-associative algebra 
and let $r=\sum _ix_i\otimes y_i\in A\otimes A$ be such that $(\alpha \otimes \alpha )(r)=r=(\beta \otimes \beta )(r)$ 
and $r$ is a solution of the associative BiHom-Yang-Baxter equation. Define the linear map 
\begin{eqnarray*}
&&R:A\rightarrow A, \;\;\;R(a)=\sum _i\alpha \beta ^3(x_i)\cdot (a\cdot \alpha ^3(y_i))=
\sum _i(\beta ^3(x_i)\cdot a)\cdot \alpha ^3\beta (y_i), \;\;\;\;\;\forall \;\;a\in A. 
\end{eqnarray*}
Then $R$ is an $\alpha \beta $-Rota-Baxter operator. 
\end{theorem}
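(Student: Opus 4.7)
The plan is first to dispose of the easy preliminaries. Using that $\alpha $ and $\beta $ are multiplicative, that $\alpha \circ \beta =\beta \circ \alpha $, and that $(\alpha \otimes \alpha )(r)=(\beta \otimes \beta )(r)=r$, one checks immediately from the formula $R(a)=\sum _i\alpha \beta ^3(x_i)\cdot (a\cdot \alpha ^3(y_i))$ that $R\circ \alpha =\alpha \circ R$ and $R\circ \beta =\beta \circ R$. The equality of the two given expressions for $R(a)$ is a single instance of the BiHom-associativity axiom (\ref{BHassoc}) applied with $x=\beta ^3(x_i)$, $y=a$, $z=\alpha ^3(y_i)$, so both forms may be used freely below.

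The main task is to verify
\[
R(\alpha \beta (a))\cdot R(\alpha \beta (b))\;=\;R\bigl(\alpha \beta (a)\cdot R(b)\bigr)+R\bigl(R(a)\cdot \alpha \beta (b)\bigr).
\]
I would expand all three terms into double sums indexed by $i,j$ using the two available formulas for $R$. In $R(\alpha \beta (a))\cdot R(\alpha \beta (b))$ I would use the second form of $R$ on the left factor and the first form on the right factor, so that the middle of the resulting product contains the pair $(\cdots \cdot \alpha ^3\beta (y_i))\cdot (\alpha \beta ^3(x_j)\cdot \cdots )$, which after repeated use of (\ref{BHassoc}) and of the equalities $\alpha ^k\beta ^\ell (x_i)\otimes y_i=x_i\otimes \alpha ^{-k}\beta ^{-\ell }(y_i)$ valid on the sum (by invariance of $r$), brings the product $y_i\cdot x_j$ into a central position. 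In parallel, expanding the inner $R$ on the right-hand side produces terms in which the central position is occupied by $x_i\cdot x_j$ (from $R(a)\cdot \alpha \beta (b)$, after further reassociation) or by $y_j\cdot y_i$ (from $\alpha \beta (a)\cdot R(b)$). After normalising every term to a common associativity pattern and a common list of powers of $\alpha ,\beta $ on $a$, $b$, $x_i$, $y_i$, $x_j$, $y_j$, the difference between the two sides of the putative Rota–Baxter identity becomes a single sum in which one recognises precisely the combination
\[
\sum _{i,j}\alpha (x_i)\otimes y_i\cdot x_j\otimes \beta (y_j)\;-\;\sum _{i,j}\bigl(x_i\cdot x_j\otimes \beta (y_j)\otimes \beta (y_i)+\alpha (x_i)\otimes \alpha (x_j)\otimes y_j\cdot y_i\bigr),
\]
that is, $A(r)$, sandwiched between $a$ in the first slot and $b$ in the last slot (via the bimodule actions (\ref{action1}) and (\ref{action2}) appropriately twisted by $\alpha ,\beta $). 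By hypothesis $r$ satisfies the associative BiHom-Yang-Baxter equation (\ref{BHAYBE2}), i.e.\ $A(r)=0$, so this expression vanishes, yielding (\ref{generRB}).

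The main obstacle is purely organisational: there is no deep step beyond repeated invocations of BiHom-associativity and of the $\alpha \otimes \alpha $ and $\beta \otimes \beta $ invariance of $r$, but the computation is long because many copies of $\alpha $ and $\beta $ must be tracked through each re-association, and the twist exponents in the two forms of $R$ do not match until several rewrites have been performed. The conceptual content is entirely contained in the observation that the failure of $R$ to satisfy the Rota-Baxter identity is precisely the $(a,b)$-sandwich of $A(r)$; once this is isolated, the conclusion is immediate.
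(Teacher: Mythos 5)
The paper itself gives no proof of this statement; it is recalled from \cite{lmmp3} and used as a black box, so there is no internal argument to compare yours against. On its own terms, your strategy is the standard one for results of this kind (the BiHom version of Aguiar's observation that a solution of the associative Yang--Baxter equation yields a Rota--Baxter operator via $R(a)=\sum_i x_i\cdot a\cdot y_i$): expand each side of (\ref{generRB}) into double sums over $i,j$, reassociate with (\ref{BHassoc}), and identify the discrepancy between the two sides as the element $A(r)$ sandwiched between $a$ and $b$. Your preliminary observations are correct: $R$ commutes with $\alpha$ and $\beta$ by the invariance of $r$ and multiplicativity of the structure maps, and the two displayed expressions for $R(a)$ agree by one application of (\ref{BHassoc}).

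There are, however, two genuine defects. First, the identity $\sum_i\alpha^k\beta^\ell(x_i)\otimes y_i=\sum_i x_i\otimes\alpha^{-k}\beta^{-\ell}(y_i)$ that you invoke is not available: the theorem does not assume $\alpha$ and $\beta$ bijective, and the hypotheses $(\alpha\otimes\alpha)(r)=r=(\beta\otimes\beta)(r)$ only allow you to \emph{add} equal nonnegative powers to both tensor factors simultaneously, i.e.\ $\sum_i x_i\otimes y_i=\sum_i\alpha^k\beta^\ell(x_i)\otimes\alpha^k\beta^\ell(y_i)$; they never let you transfer powers from one factor to the other or strip them off. The manipulation can be repaired by only ever raising powers on both slots at once (choosing the substitution at the outset so that all later terms match), but as written the step is invalid. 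Second, and more seriously, the entire content of the theorem lies in the claim that the \emph{specific} twists $\alpha\beta^3$ on $x_i$ and $\alpha^3$ on $y_i$ are exactly those for which the repeated reassociations of the three double sums close up on a common bracketing with matching exponents on $a$, $b$, $x_i$, $y_i$, $x_j$, $y_j$, leaving precisely the $(a,b)$-sandwich of (\ref{BHAYBE2}); with other exponents the terms simply fail to align and the identity is false. Your proposal defers exactly this verification (``after normalising every term to a common associativity pattern\dots''), so it records the correct shape of the argument without establishing the result. To complete the proof you must carry the exponent bookkeeping through every application of (\ref{BHassoc}) and exhibit the common normal form explicitly.
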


Let now $(A, \mu , \Delta _r, \alpha , \beta , \psi =\beta , \omega =\alpha )$ be a quasitriangular infinitesimal BiHom-bialgebra. 
By applying Theorem \ref{infprelie}, we obtain a BiHom-pre-Lie algebra $(A, \ast , \alpha ^2\beta , \alpha ^3\beta ^3)$, 
where the multiplication $\ast $ becomes:
\begin{eqnarray*}
a\ast b&=&\alpha \beta ^3(b_1)\cdot (\alpha (a)\cdot \alpha ^3(b_2))\\
&=&\sum _i\alpha \beta ^3(\alpha (x_i))\cdot (\alpha (a)\cdot \alpha ^3(y_i\cdot b))-\sum _i\alpha \beta ^3(b\cdot x_i)
\cdot (\alpha (a)\cdot \alpha ^3(\beta (y_i)))\\
&=&\sum _i\alpha ^2\beta ^3(x_i)\cdot (\alpha (a)\cdot \alpha ^3(y_i\cdot b))-\sum _i\alpha \beta ^3(b\cdot x_i)
\cdot (\alpha (a)\cdot \alpha ^3\beta (y_i)).
\end{eqnarray*}

On the other hand, to $(A, \mu , \Delta _r, \alpha , \beta , \psi =\beta , \omega =\alpha )$ we can associate 
the $\alpha \beta $-Rota-Baxter operator $R$ as in Theorem \ref{ABRB}, from which, by using 
Proposition \ref{alphabetaRB}, and choosing $\eta =\alpha ^2\beta $ there, we obtain a BiHom-dendriform algebra 
$(A, \prec , \succ , \alpha ^2\beta , \alpha ^3\beta ^3 )$. Assume now that moreover $\alpha $ and $\beta $ are 
bijective. Then, by Corollary  \ref{corolalphabetaRB}, we obtain a left BiHom-pre-Lie algebra 
$(A, \star , \alpha ^2\beta , \alpha ^3\beta ^3 )$, whose multiplication is 
\begin{eqnarray*}
a\star b&=&R(a)\cdot \alpha ^3\beta ^2(b)-\alpha ^2\beta ^3(b)\cdot R(\alpha \beta ^{-1}(a))\\
&=&\sum _i\{\alpha \beta ^3(x_i)\cdot (a\cdot \alpha ^3(y_i))\}\cdot \alpha ^3\beta ^2(b)\\
&&\;\;\;\;\;-\sum _i\alpha ^2\beta ^3(b)\cdot \{\alpha \beta ^3(x_i)\cdot (\alpha \beta ^{-1}(a)\cdot \alpha ^3(y_i))\}\\
&\overset{(\ref{BHassoc})}{=}&\sum _i\alpha ^2\beta ^3(x_i)\cdot \{(a\cdot \alpha ^3(y_i))\cdot \alpha ^3\beta (b)\}-
\sum _i\{\alpha \beta ^3(b)\cdot \alpha \beta ^3(x_i)\}\cdot \{\alpha (a)\cdot \alpha ^3\beta (y_i)\}\\
&\overset{(\ref{BHassoc})}{=}&\sum _i\alpha ^2\beta ^3(x_i)\cdot \{\alpha (a)\cdot \alpha ^3(y_i\cdot b)\}-
\sum _i\alpha \beta ^3(b\cdot x_i)\cdot \{\alpha (a)\cdot \alpha ^3\beta (y_i)\}.
\end{eqnarray*}
So, the BiHom-pre-Lie algebras $(A, \ast , \alpha ^2\beta , \alpha ^3\beta ^3)$ and 
$(A, \star , \alpha ^2\beta , \alpha ^3\beta ^3)$ coincide. This extends Aguiar's classical result from \cite{aguiarlectnotes}, 
whose Hom-version was obtained in \cite{lmmp3}, even without the restriction concerning the bijectivity of the 
structure map. 


\begin{center}
ACKNOWLEDGEMENTS
\end{center}

This paper was written while Claudia Menini was a member of the "National Group for Algebraic and 
Geometric Structures and their Applications" (GNSAGA-INdAM). 
Ling Liu was supported by the NSF of China (Grant Nos.11601486, 11801515) and  
Foundation of Zhejiang Educational Committee (Y201738645).

\end{document}